\newcommand{\bT}{{\mathbb T}} 
\newcommand{\bP}{{\mathbb P}}
\newcommand{\bZ}{{\mathbb Z}}
\newcommand{\bR}{{\mathbb R}}
\newcommand{\bA}{{\mathbb A}}
\newcommand{\bC}{{\mathbb C}}
\newcommand{\bF}{{\mathbb F}}
\newcommand{\bQ}{{\mathbb Q}}
\newcommand{\la}{{\langle}}
\newcommand{\ra}{{\rangle}}
\newtheorem{thm}{Theorem}[section]
\newtheorem{lemma}[thm]{Lemma}
\newtheorem{cor}[thm]{Corollary}
\newtheorem{prop}[thm]{Proposition}
\numberwithin{equation}{section}
\begin{document}

\title[]{Non stable rationality of projective approximations 
for classifying spaces}

 \author{Nobuaki Yagita}

\address{[N. Yagita]
Department of Mathematics, Faculty of Education, Ibaraki University,
Mito, Ibaraki, Japan}

%\keywords{birational invariant, coniveau filtrations,
%unramified cohomology｝																bjclass[2010]{Primary 20G10, 55R35, 14C15 ; 
%Seconarｙ 57T25}

\email{ nobuaki.yagita.math@vc.ibaraki.ac.jp, }

\keywords{birational invariant, coniveau filtrations, unramified cohomology}
\subjclass[2020]{ 14C15, 14P99, 55N20}

\begin{abstract}

Let $BG$ be the classifying space of an algebraic group $G$
over a subfield $k$ of  $\bC$ of complex numbers.
We compute a new stable birational invariant defined by 
Benoist-Ottem as the difference of two coniveau filtrations  of a
 smooth projective (Ekedahl) approximation $X$ of $BG\times \bP^{\infty}$.
Then  we show (by without and with the unramified cohomology) in many cases $X$ are not stable
 rational. 
%We also study this invariant of the norm quadrics over $k=\bR,$
%(while we can not find non-zero  examples of  above new invariants for these  cases).
\end{abstract}

\maketitle

\section{Introduction}

%Let $p$ be a prime number and $A=\bQ,\bZ$ or $\bZ/p^i$ for $i\ge 1$.
Let $X$ be a smooth projective variety over $k\subset \bC$. 
The conception of the rationality is how $X$ is near to
some projective space $\bP^n$ over $k$.  
Indeed,  $X$ is called $rational$ if $X$ is birational to a 
projective space $\bP^n$.  A variety $X$ is called $stable$ rational if $X\times \bP^m$ is rational for some $m\ge 0.$
A variety $X$ is called $retract$ rational if
the rational identity map on $X$ is factorized rationally  through a projective space.

 Of course,  the existences and properties of non  these  rationality for $X$ are widely studied by many authors  (see explanations in \cite{Pi}).  
For examples, such projective $X$ which are surface bundles of three (or four)folds are studied detailedly.  These 
examples are computed by often using the unramified cohomology
$H_{ur}^*(X;\bZ/p)$ which is invariant of (retract) rationality.

 There are another examples (exchanging $\bP^n$ by $\bA^n$); the quasi projective 
variety  represented by  the classifying spaces $BG$ of an affine algebraic groups $G$ [Me].

In this paper , we study the similar but different invariant
$DH^*(X)$ for the 
projective approximation $X=X_G$ by Ekedahl for the classifying space $BG\times \bP^{\infty}$.  Note that
stable rationality types of $BG\times \bP^{\infty}$ and its projective approximation are completely different in general.

For example we compare these invariants when $G=SO_{2m+1}$ 
% and $H^*(BG;\bZ/2)\cong \bZ/2[w_2,w_3,...,w_{2m+1}]$
%where $w_i$ are the Stiefel-Whitney class  with $Q_0(w_{2i})=w_{2i+1}$,
 \[\begin{cases}  DH^*(BG)\qquad is\ not\ defined,\\
H^*_{ur}(BG;\bZ/2)\cong \bZ/2\{1\}, \qquad \\
DH^*(X_G)/2  \supset  \bZ/2\{w_3,w_5,...,w_{2m+1}\}, \\
  
H^*_{ur}(X_G;\bZ/2)\supset   
\bZ/2\{1,w_2,w_4,...,w_{2m}\} .
\end{cases} \]
(The notation $A\{a,b,...\}$ means the $A$-free module generated by $a,b,...$.)

We compute a new stable birational invariant induced from  
Benoist-Otten \cite{Be-Ot} as the difference of the two coniveau filtrations.  For  a fixed prime $p$, define the
stable birational invariant
\[  DH^*(X;A)/p=N^1H^*(X;A)/(p,\tilde N^1H^*(X;A))
 \]
 for the smooth projective approximation $X$ of $BG\times \bP^{\infty}$.
Here   $H^*(X;A)$ is the Betti (or \'etale) cohomology and $N^1H^*(X;A)$ (resp, $\tilde N^1H^*(X;A))$) is the coniveau (resp. $strong$ coniveau) filtration defined by
the kernel of the restriction maps to open sets  of $X$ (resp. the image of of Gysin maps).  For details see $\S 2$ below.

Hence $DH^*(X;A)/p$ is written as a sub-quotient module of $H^*(X;A)/p$.

Here an approximation (for $degree \le N$) is 
the projective (smooth)  variety $X=X_G(N)$ such that
there is a map $g:X\to BG\times \bP^{\infty}$ with \[g^*:H^*(BG\times \bP^{\infty};A)\cong H^*(X;A) \quad 
for \ *<N. \]
 (In this paper, we say $X$ is an approximation $for$ $BG$ when it is that $of$ $BG\times \bP^{\infty} $
strictly speaking.)    Let us write $DH^*(X;\bZ)$ by $DH^*(X)$ simply as usual.

 For example,  let $G=G_n$ be the
elementary abelian $p$-group $(\bZ/p)^n$.
Recall the $mod(p)$ cohomology  (for $p$ odd)
\[H^{*}(BG_n;\bZ/p) \cong \bZ/p[y_1,....y_n]\otimes
\Lambda(x_1,...,x_n)\]
where $|x_i|=1$ and $Q_0(x_i)=y_i$ for the Bokstein
operation  $Q_0=\beta$.  (Here $\Lambda(a,b,...)$ is the exterior algebra generated by $a,b,...).$

\begin{thm}
For any prim $p$, 
take $G=G_n=(\bZ/p)^n$,  $n\ge 2$ and $\alpha_i=Q_0(x_1x_2...x_i)\in H^{n+1}(X_{G_n})$.
Then we have
\[  DH^*(X_{G_n})/p\supset \bZ/p\{\alpha_2,\alpha_3,...,\alpha_n\}
 \quad *\le n+1<N.\]
\end{thm}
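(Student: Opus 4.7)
Since $|\alpha_i| = i+1$ and the degrees $3, 4, \ldots, n+1$ are pairwise distinct and lie in the stable range $* \le n+1 < N$ where the Ekedahl approximation gives $H^*(X_{G_n};\bZ/p) \cong H^*(BG_n;\bZ/p)$, the joint $\bZ/p$-linear independence of $\{\alpha_2, \ldots, \alpha_n\}$ in $DH^*(X_{G_n})/p = N^1/(p, \tilde N^1)$ reduces to the individual statement: for each $i \in \{2, \ldots, n\}$, $\alpha_i \in N^1 H^{i+1}(X_{G_n};\bZ)$ but $\alpha_i \notin \tilde N^1 + p N^1$.

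The containment in $N^1$ follows from the derivation property of $\beta = Q_0$ on mod-$p$ cohomology,
\[
\alpha_i \equiv \sum_{j=1}^{i}(-1)^{j-1}\, y_j \cdot (x_1 \cdots \widehat{x_j} \cdots x_i) \pmod p,
\]
in which each $y_j = c_1(L_j)$ is a first Chern class of a character line bundle, hence lies in $N^1 H^2$; since $N^1$ is an ideal under cup product, each summand and thus $\alpha_i$ mod $p$ lies in $N^1$, and one lifts this to an integral statement using that $\alpha_i = \tilde\beta(x_1 \cdots x_i)$ is itself an integral Bockstein of a mod-$p$ class.

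The exclusion $\alpha_i \notin \tilde N^1 + p N^1$ is the crux and the main obstacle. The subtle point is that the same mod-$p$ decomposition would appear to put $\alpha_i$ directly into $\tilde N^1$: each $y_j$ is a Gysin pushforward from a smooth divisor, and by the projection formula $\tilde N^1$ is an $H^*$-ideal, so $\alpha_i \bmod p$ lies in $\tilde N^1 \otimes \bZ/p$. This factorization, however, is available only mod $p$, not integrally, because $H^1(BG_n;\bZ) = 0$ forces the factors $x_k$ to have no integral lift; hence the integral class $\alpha_i$ cannot be realized as an integral product of Chern classes with lower-degree integral classes supported on divisors. Writing $\alpha_i = \gamma + p\delta$ with $\gamma \in \tilde N^1(\bZ)$, the task is to prove that no choice of $\delta$ lies in $N^1$, i.e. that $\alpha_i$ represents a genuine $p$-torsion element in $H^*/N^1$. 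The detection tool---produced by the coniveau spectral sequence together with restriction to proper subgroups of $G_n$ and the essentiality (in the sense of transfer-kernel behaviour) of $\beta(x_1 \cdots x_i)$ as a mod-$p$ class of $BG_n$---must vanish on $\tilde N^1 + p N^1$ yet stay nonzero on $\alpha_i$. Carrying this detection through on the projective approximation $X_{G_n}$ rather than on $BG_n$ itself, where new smooth subvarieties may appear and enlarge $\tilde N^1$, is the technical heart of the theorem, leveraging the coniveau-on-Ekedahl-approximations framework developed earlier in the paper.
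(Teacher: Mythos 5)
Your reduction is sound as far as it goes: each $\alpha_i$ lives in its own degree $i+1$, so the theorem is equivalent to showing, for each $i$, that $\alpha_i\in N^1H^{i+1}(X_{G_n};\bZ)$ and $\alpha_i\notin \tilde N^1H^{i+1}(X_{G_n};\bZ)+pN^1H^{i+1}(X_{G_n};\bZ)$. The first containment is fine (and in fact the paper gets it more cheaply: $\alpha_i$ is an integral Bockstein, hence torsion, hence in $N^1$ by the Colliot-Th\'el\`ene--Voisin lemma, Lemma 2.4). But the exclusion, which you yourself flag as ``the crux and the main obstacle,'' is never actually proved in your write-up. You assert that a ``detection tool'' built from the coniveau spectral sequence, restriction to proper subgroups, and ``essentiality'' of $\beta(x_1\cdots x_i)$ ``must vanish on $\tilde N^1+pN^1$ yet stay nonzero on $\alpha_i$,'' but you do not construct such a tool, and it is not at all clear how restriction to subgroups could see the difference between $N^1$ and $\tilde N^1$ (both are functorial under such restrictions, and both contain the ideal generated by Chern classes). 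This is a genuine gap, not a routine verification.

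The paper's actual detection mechanism is topological and quite specific. Suppose $\alpha_i=f_*(\alpha')$ with $f\colon Y\to X$ proper from a smooth $Y$ of codimension $1$ and $\alpha'\in H^{i-1}(Y;\bZ)$. Viewing $\alpha'$ as pulled back from the fundamental class of $K(\bZ,i-1)$, Tamanoi's computation (Lemma 3.5, $MU^*(K(\bZ,n))\otimes_{MU^*}\bZ/p\cong\bZ/p[Q_{j_1}\cdots Q_{j_{n-2}}\eta_n]$) shows that $Q_{j_1}\cdots Q_{j_{i-3}}(\alpha'\bmod p)$ lies in the image of $MU^*(Y)\to H^*(Y;\bZ/p)$; the Sullivan exact sequence for connective Morava $K$-theory (Lemma 3.3) then forces one further Milnor operation to kill it, so $Q_{j_{i-2}}\cdots Q_{j_1}(\alpha')=0$. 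Since the $Q_j$ commute with Gysin maps (Lemma 3.1), this would force $Q_{j_{i-2}}\cdots Q_{j_1}(\alpha_i)=0$ in $H^*(X;\bZ/p)$. But the explicit computation
\[
Q_{j_1}\cdots Q_{j_{i-2}}Q_0(x_1\cdots x_i)=y_1^{p^{j_1}}\cdots y_{i-2}^{p^{j_{i-2}}}y_{i-1}x_i+\cdots\neq 0
\]
in $H^*(BG_n;\bZ/p)\cong H^*(X_{G_n};\bZ/p)$ (degrees $<N$) gives the contradiction; the same computation disposes of the $pN^1$ ambiguity since the $Q_j$ are applied after reduction mod $p$. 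None of this apparatus (Milnor operations commuting with transfers, $MU^*$ of Eilenberg--MacLane spaces, Morava $K$-theory) appears in your proposal, and without some substitute for it the exclusion from $\tilde N^1$ remains unproved.
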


Hence $X_{G_n}$ is not stable rational.  Moreover 
$X_{G_n}$ and $X_{G_{n'}}$ are not stable birational
equivalent when $n\not =n'$.

Next we consider the (connected) case $G=SO_n$ the special orthogonal group ($p=2$).  Its cohomology is
\[ H^*(BSO_{2m+1};\bZ/2)\cong \bZ/2[w_2,w_3,....w_{2m+1}],\]
with $Q_0w_{2m}=w_{2m+1}$  where $w_i$ is the Stiefel-Whitney class for the embedding $SO_n \to O_n$.
Hence we can identify $w_{2i+1}\in H^*(BG)$.

\begin{thm}  \cite{YaC} Let $X_n=X_n(N)$ be approximations for $BSO_n$  for $n\ge 3$ and $ 2^{m+2}<N$. 
Then we have
\[  DH^*(X_{2m+1})/2\supset  \bZ/2\{w_3,w_5,...,w_{2m+1}\}
\quad for \  all \  2m+1\le  *<N.\]
%Hence, for $n\ge 3$ and for $m\not =m'$, we see that $X_{2m+1}$ 
%and $X_{2m'+1}$
%are not stable rational equivalence. 
\end{thm}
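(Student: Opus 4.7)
The claim has two parts for each $i$ with $1 \le i \le m$: that the class $w_{2i+1}$ defines a nonzero element of $N^1 H^{2i+1}(X_{2m+1}; \bZ/2)$, and that this element is not in $\tilde N^1 H^{2i+1}(X_{2m+1}; \bZ/2)$. The nonvanishing in $H^{2i+1}(X_{2m+1}; \bZ/2)$ is immediate from the approximation property $g^{*}\colon H^{*}(BSO_{2m+1};\bZ/2) \cong H^{*}(X_{2m+1};\bZ/2)$ for $* < N$, valid because $2i+1 \le 2m+1 < N$ by the hypothesis $2^{m+2} < N$.

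To place $w_{2i+1}$ in $N^1$, I would exploit the Bockstein relation $w_{2i+1}=Q_0w_{2i}$. This shows that $w_{2i+1}$ is the mod $2$ reduction of a $2$-torsion class in $H^{2i+1}(X_{2m+1};\bZ)$. Torsion integral classes on a smooth projective variety lie in the coniveau filtration $N^1$ (via the Bloch--Ogus spectral sequence, whose $E_2^{0,*}$-term is torsion-free on a smooth variety), and mod $2$ reduction respects $N^1$; hence $g^{*}(w_{2i+1}) \in N^1 H^{2i+1}(X_{2m+1};\bZ/2)$.

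The main obstacle is showing $w_{2i+1} \notin \tilde N^1$ mod $2$. By definition, $\tilde N^1 H^{2i+1}(X_{2m+1};\bZ/2)$ is generated by Gysin pushforwards $f_{*}(\beta)$ for smooth projective $f\colon Y \to X_{2m+1}$ of codimension $\ge 1$ and $\beta \in H^{2i-1}(Y;\bZ/2)$. I would bound this image using the Ekedahl structure of $X_{2m+1}$: Gysin pushforwards from such $Y$ are compatible with the restriction to a maximal torus $T \subset SO_{2m+1}$, and since $H^{*}(BT;\bZ/2)$ is concentrated in even degrees, all odd-degree contributions from $\tilde N^1$ must factor through a controlled subalgebra of $H^{*}(BSO_{2m+1};\bZ/2)$ generated (modulo Gysin-trivial terms) by even Stiefel--Whitney classes and products of them with integral classes coming from $BT$. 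The Milnor operations $Q_j$ and Wu-type relations in $H^{*}(BSO_{2m+1};\bZ/2)$ then distinguish $w_{2i+1}$ from any such expression.

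The hard step is the last one: giving an explicit, verifiable description of $\tilde N^1 H^{\mathrm{odd}}(X_{2m+1};\bZ/2)$ small enough to exclude $w_{2i+1}$. Following \cite{YaC}, I would carry this out by reducing to a Chow-theoretic or $BP$-theoretic computation at the level of the approximation, where the vanishing/nonvanishing of $w_{2i+1}$ modulo $\tilde N^1$ becomes a finite combinatorial check in $H^{*}(BSO_{2m+1};\bZ/2)$ using $Q_0w_{2i}=w_{2i+1}$ together with the action of $Q_j$ for $j \ge 1$.
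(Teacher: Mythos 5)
Your proposal handles the easy half correctly: $w_{2i+1}=Q_0w_{2i}$ is a $2$-torsion integral class, and torsion classes on a smooth complex variety lie in $N^1$ (this is Lemma 2.4 of the paper, the Colliot-Th\'el\`ene--Voisin consequence of Bloch--Kato; your Bloch--Ogus justification is the same result in different clothing). But the heart of the theorem is the exclusion $w_{2i+1}\notin \tilde N^1$, and there you have a genuine gap. The step ``Gysin pushforwards from such $Y$ are compatible with the restriction to a maximal torus $T\subset SO_{2m+1}$'' does not work: in the definition of $\tilde N^1$ the sources $Y$ are arbitrary smooth proper varieties over $X$, with no relation whatsoever to $BT$ or to any group-theoretic structure, so there is no ``controlled subalgebra'' through which their pushforwards must factor. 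Your closing paragraph then defers the hard step to \cite{YaC} rather than proving it.

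The mechanism the paper actually uses (Lemmas 3.4--3.6) is quite specific and worth knowing. Suppose $\alpha=f_*(\alpha')$ with $\alpha'\in H^n(Y;\bZ)$ and $f$ a proper map from a smooth $Y$ of codimension $c\ge 1$. Representing $\alpha'$ by a map $Y\to K(\bZ,n)$ and invoking Tamanoi's computation of $MU^*(K(\bZ,n))\otimes_{MU^*}\bZ/p$ (Lemma 3.5), one finds that $Q_{i_1}\cdots Q_{i_{n-2}}(\alpha')$ lies in the image of $MU^*(Y)\to H^*(Y;\bZ/p)$; the Sullivan exact sequence for connective Morava $K$-theory (Lemma 3.3) then forces $Q_{i_{n-1}}Q_{i_1}\cdots Q_{i_{n-2}}(\alpha')=0$. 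Since the $Q_j$ commute with Gysin maps (Lemma 3.1), any $\alpha\in\tilde N^1H^{n+2}(X)$ is killed by every length-$(n-1)$ iterated Milnor operation. So to finish one only needs the purely algebraic nonvanishing of a suitable $Q_{i_1}\cdots Q_{i_{n-1}}(w_{2i+1})$ in the polynomial ring $H^*(BSO_{2m+1};\bZ/2)$ --- and this computation must land in degrees $<N$, which is exactly why the hypothesis $2^{m+2}<N$ (equivalently $|Q_1\cdots Q_{2m-1}(w_{2m+1})|<N$ in Theorem 5.1) appears; your proposal never explains that hypothesis, which is a symptom of the missing mechanism.
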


We consider the cases $G$ is a simply connected simple 
group.   Let $G$ contain $p$-torsion.  Then 
we know $H^4(BG)\otimes \bZ_p\cong \bZ_{p}$, and write its generator by $w$.  Then we have 
\begin{lemma}  \cite{YaC}   Let $G$ be a simply connected  group such that $H^*(BG)$ has $p$-torsion.
Let $X=X(N)$ be an approximation for $BG$ for 
$N\ge 2p+3$. Then
 \[ DH^4(X)/p\supset \bZ/p\{w\}.\]
% Hence all $X$ are not stable rational. 
\end{lemma}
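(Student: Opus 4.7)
The goal is to show that $w$ represents a nontrivial class in $DH^4(X)/p = N^1 H^4(X)/(p, \tilde N^1 H^4(X))$; this requires verifying $w \in N^1 H^4(X)$, $w \not\equiv 0 \pmod p$, and $w \notin \tilde N^1 H^4(X;\bZ/p)$ modulo $p$.

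For containment in $N^1$, I would invoke the algebraicity of $w$. For $G$ simply connected simple, Totaro's computation yields $CH^2(BG) \cong H^4(BG;\bZ) \cong \bZ$ generated by $w$; pulling back along the approximation map $X \to BG$ produces an algebraic $2$-cycle on $X$ whose cycle class is $w$, placing $w$ in $\Img(CH^2(X) \to H^4(X)) \subseteq N^2 H^4(X) \subseteq N^1 H^4(X)$. The assumption $N \ge 2p+3 > 4$ guarantees the approximation isomorphism $H^4(X;\bZ/p) \cong H^4(BG;\bZ/p)$, so $w$ is nonzero modulo $p$.

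The main step---and the heart of the lemma---is excluding $w$ from $\tilde N^1 H^4(X;\bZ/p)$. The plan is to use the Milnor primitive $Q_1$, a stable cohomology operation of degree $2p-1$. A classical fact for simply connected simple $G$ with $p$-torsion (Kac; Kono--Mimura) is that $Q_1 w$ is a nonzero indecomposable class in $H^{2p+3}(BG;\bZ/p)$, detecting the $p$-primary torsion generator in $H^{2p+3}(BG;\bZ)$; the hypothesis $N \ge 2p+3$ transports this to $H^{2p+3}(X;\bZ/p)$. Were $w \equiv f_*(\alpha) \pmod p$ for some proper morphism $f\colon Y \to X$ from a smooth $Y$ of positive codimension, applying $Q_1$ and invoking a Riemann--Roch-type formula for $Q_1$ and $f_*$ would place $Q_1 w$, up to terms decomposable via lower-degree Chern classes on $X$, in $\tilde N^1 H^{2p+3}(X;\bZ/p)$.

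The main anticipated obstacle is then to show that $Q_1 w$ is not itself in $\tilde N^1 H^{2p+3}(X;\bZ/p)$, modulo such decomposable terms, in order to obtain the desired contradiction. I would approach this via the Hopf-algebra structure of $H^*(BG;\bZ/p)$: under the low-degree identification $H^*(X;\bZ/p) \cong H^*(BG;\bZ/p)$, any class in $\tilde N^1 H^{2p+3}$ must, by the projection formula, be expressible as a combination of products involving classes of strictly lower degrees (typically pullbacks of Chern classes from $BGL_n$ and their Steenrod images). Indecomposability of $Q_1 w$ in the Hopf algebra $H^*(BG;\bZ/p)$---a structural consequence of the presence of $p$-torsion for simply connected simple $G$---then forbids such a decomposition, completing the argument. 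The detailed structural verification is precisely the content of \cite{YaC}.
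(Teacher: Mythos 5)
There are two genuine gaps, and both concern the central mechanisms of the proof.

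First, your justification that $w\in N^1H^4(X)$ rests on the claim that $CH^2(BG)\cong H^4(BG;\bZ)$ with $w$ algebraic. This is false precisely under the hypothesis of the lemma: when $H^*(BG)$ has $p$-torsion, the cycle map $CH^2(BG)\to H^4(BG)\cong\bZ\{w\}$ has image $\bZ\{pw\}$ (for $G_2$ at $p=2$ the paper records $CH^*(BG_2)\cong D\{1,2w_4\}\oplus D/2[c_7]^+$, so only $2w_4$ is algebraic; the analogous statement for general $G$ is the content of [Ya5]). This non-algebraicity of $w$ is exactly the source of the Rost invariant and of the phenomenon being detected, so it cannot be assumed away. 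The paper's route is: $pw$ is a Chern class, hence algebraic, hence in $N^1H^4(X)$; so there is an open $U\subset X$ with $pw|_U=0$, i.e.\ $w|_U$ is torsion; by the Colliot-Th\'el\`ene--Voisin theorem (Lemma 2.4) torsion classes lie in $N^1$, so $w|_U$ dies on a smaller open $U'$, whence $w\in N^1H^4(X)$. Your argument needs to be replaced by this (or an equivalent) torsion argument.

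Second, your exclusion of $w$ from $\tilde N^1$ is routed through showing that $Q_1w$ lands in $\tilde N^1H^{2p+3}(X;\bZ/p)$ modulo decomposables and then arguing that this is impossible by indecomposability; you flag the latter as the main obstacle and do not resolve it, and the proposed resolution (that every class in $\tilde N^1H^{2p+3}$ is a product of lower-degree classes) is not true --- $\tilde N^1$ consists of Gysin images, and the projection formula only shows such images absorb products, not that they are products. The paper's argument is both simpler and complete: $Q_1$ commutes with Gysin maps on the nose (Lemma 3.1, since $Q_i$ annihilates the Thom class), and if $w=f_*(\alpha')$ with $\alpha'\in H^2(Y;\bZ)$ then
\[
Q_1(\alpha')=(P^1\beta-\beta P^1)(\alpha')=-\beta\bigl((\alpha')^p\bigr)=-p(\beta\alpha')(\alpha')^{p-1}=0
\]
because $\beta\alpha'=0$ for the reduction of an integral class. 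Hence $Q_1w=f_*Q_1(\alpha')=0$, directly contradicting $Q_1w\neq 0$ (which, as in your sketch, is checked by restricting to $G_2$, resp.\ $F_4$, $E_8$, or to an elementary abelian subgroup). No statement about whether $Q_1w$ lies in $\tilde N^1H^{2p+3}$ is needed. Your identification of $Q_1$ as the detecting operation and of $Q_1w\neq 0$ as the key input is correct, but the contradiction must be extracted upstairs on $Y$, not downstairs on $X$.
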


Next, we study the retract rationality of $X_G$ for the above groups $G$.
We consider the Zariski cohomology
$H_{Zar}^*(X,\mathcal{H}_{A}^{*'})$
where $\mathcal{H}_{A}^*$ is the  Zariski sheaf induced from the  presheaf  given by $U\mapsto H_{\acute{e}t}^*(U;A)$
for an open $U\subset X$.  
It is well known   when $X$ is complete and smooth, the unramified cohomology is written 
\[H_{ur}^*(X;\bZ/p)
\cong H^0_{Zar}(X;\mathcal{H}^*_{\bZ/p}),\]
 and it 
is an invariant of the retract rationality of $X$
(Proposition 3.1. 3.4 in [Me]).

By Totaro \cite{Ga-Me-Se},  the above cohomology is also isomorphic to the cohomological invariant (of $G$-torsors) i.e.
\[  H^0_{Zar}(BG;\mathcal{H}^*_{\bZ/p})
\cong Inv^*(G;\bZ/p).\]

Let $H^{*,*'}(X;\bZ/p)$ be the $mod(p)$ motivic cohomology
of $X$ so that 
\[H^{*,*}(X;\bZ/p)\cong H^*_{\acute{e}t}(X;\bZ/p)\ \  and \ \  H^{2*,*}(X;\bZ/p)\cong CH^*(X)/p.\]
Let $0\not=\tau\in H^{0,1}(Spec(k);\bZ/p)$.  Then $\tau$ defines the map 
\[\tau:H^{*,*'}(X;\bZ/p)\to
H^{*,*'+1}(X;\bZ/p) \]
such that the cycle map is written
\[ CH^*(X)/p\cong H^{2*,*}(X)/p
\stackrel{\times \tau^*}{\to} H^{2*,2*}(X;\bZ/p)\cong
H^{2*}(X;\bZ/p).\]

From Orlov-Vishik-Voevodsky [Or-Vi-Vo], ([Te-Ya] for $p:odd$,)
we have 
\begin{lemma}  (\cite{Or-Vi-Vo})  We have  the short exact sequence 
\[    0\to  H^{*,*}(X;\bZ/p)/(\tau) \to
H^0_{Zar}(X;\mathcal{H}^*_{\bZ/p})  
 \to  Ker(\tau| H^{*+1,*-1}(X;\bZ/p)) \to 0.\]
\end{lemma}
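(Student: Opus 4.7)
The plan is to deduce the short exact sequence from the long exact sequence in motivic cohomology associated to multiplication by $\tau$, combined with the Beilinson--Lichtenbaum identification of the resulting cone with the Zariski sheaf $\mathcal{H}^n_{\bZ/p}$. Throughout, let $\pi\colon X_{\acute{e}t}\to X_{Nis}$ be the change-of-site morphism.

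First I would interpret $\tau\in H^{0,1}(\mathrm{Spec}(k);\bZ/p)$ as a morphism $\bZ/p(n-1)\to \bZ/p(n)$ of motivic complexes and let $C_{\tau,n}$ be its cone.  Applying $\bH^*(X_{Nis},-)$ to the distinguished triangle $\bZ/p(n-1)\xrightarrow{\tau}\bZ/p(n)\to C_{\tau,n}\to \bZ/p(n-1)[1]$ yields the long exact sequence
\[\cdots\to H^{i,n-1}(X;\bZ/p)\xrightarrow{\tau} H^{i,n}(X;\bZ/p)\to \bH^i(X_{Nis};C_{\tau,n})\to H^{i+1,n-1}(X;\bZ/p)\to\cdots.\]

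Next I would identify the middle term.  By Beilinson--Lichtenbaum (Voevodsky--Rost), the complex $\bZ/p(n)$ is quasi-isomorphic on $X_{Nis}$ to $\tau_{\le n}R\pi_*\mu_p^{\otimes n}$, while $\bZ/p(n-1)$ agrees with $\tau_{\le n-1}R\pi_*\mu_p^{\otimes n-1}$.  Since $\tau$ becomes a unit after applying $R\pi_*$, a direct cofiber computation identifies $C_{\tau,n}$ with the shifted Nisnevich sheaf $\mathcal{H}^n_{\bZ/p}[-n]$ (the contribution from the extra degree allowed in $\tau_{\le n}$).  Because $R^n\pi_*\mu_p^{\otimes n}$ is already a Zariski sheaf by Bloch--Ogus, one obtains
\[\bH^{n}(X_{Nis};C_{\tau,n})\cong H^{0}_{Zar}(X;\mathcal{H}^n_{\bZ/p}),\qquad \bH^{n+1}(X_{Nis};C_{\tau,n})\hookrightarrow H^{1}_{Zar}(X;\mathcal{H}^n_{\bZ/p}),\]
and substituting into the long exact sequence above at $i=n$ and $i=n+1$ produces the four-term piece
\[H^{n,n-1}(X;\bZ/p)\xrightarrow{\tau} H^{n,n}(X;\bZ/p)\to H^{0}_{Zar}(X;\mathcal{H}^n_{\bZ/p})\to H^{n+1,n-1}(X;\bZ/p)\xrightarrow{\tau} H^{n+1,n}(X;\bZ/p).\]
Taking cokernel on the left and kernel on the right delivers exactly the claimed short exact sequence.

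The main obstacle is establishing the quasi-isomorphism $C_{\tau,n}\simeq \mathcal{H}^n_{\bZ/p}[-n]$ in the Nisnevich derived category: this depends on Beilinson--Lichtenbaum, i.e. the Bloch--Kato conjecture, both to pin down the truncation level and to identify the cohomology sheaf at the top degree.  For $p=2$ this is done in \cite{Or-Vi-Vo}; the adaptation to odd $p$ is carried out in [Te-Ya] and requires the resolution of Bloch--Kato at odd primes by Rost--Voevodsky.  Once this sheaf-level identification is in place, the rest of the argument is a formal consequence of the long exact sequence.
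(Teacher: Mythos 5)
Your argument is correct and follows essentially the same route as the paper: the paper's Lemma 7.1 records exactly the Orlov--Vishik--Voevodsky long exact sequence relating $\times\tau$ on $H^{*,*'}(X;\bZ/p)$ to $H^{*}_{Zar}(X;\mathcal{H}^n_{\bZ/p})$, and the short exact sequence is extracted from it at $m=n$ just as you do, using $H^{-1}_{Zar}(X;\mathcal{H}^n_{\bZ/p})=0$ to get injectivity of the first $\times\tau$. The only difference is that you also sketch the derivation of that long exact sequence from the cone of $\tau$ and Beilinson--Lichtenbaum, whereas the paper simply cites it from \cite{Or-Vi-Vo} (and \cite{Te-Ya2} for odd $p$).
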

Here \quad $ H^{*,*}(X;\bZ/p)/(\tau)=H^{*,*}(X;\bZ/p)/(\tau
H^{*.*-1}(X.\bZ/p)) $
 \[\cong   H^{*}(X;\bZ/p)/N^1H^*(X;\bZ/p).\]   
 This cohomology is called $stable$ cohomology,  and studied  by 
Bogolomov [Bo]. [Te-Ya2]. 

For example, 
when $G=(\bZ/p)^n$, it is known
\[  Inv^*(G;\bZ/p)\cong \Lambda(x_1,...,x_n).  \]

\begin{thm}
Let $G=(\bZ/p)^n$ and  $X=X_{G}$.  Then 
for $b_i=x_1...x_i$
\[ H^{*}_{ur}(X;\bZ/p)\supset  H^{*,*}(X;\bZ/p)/(\tau) 
\supset \bZ/2\{1, b_2, b_3,...,b_n\}.\]
Hence each $X_n$ and $X_{n'}$ are not retract birational
equivalent
when $n\not=n'$.
\end{thm}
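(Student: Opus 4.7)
The plan is to combine Lemma 1.4 (the Orlov--Vishik--Voevodsky short exact sequence), the identification $H^0_{Zar}(BG;\mathcal{H}^*_{\bZ/p})\cong Inv^*(G;\bZ/p)$, and the computation $Inv^*((\bZ/p)^n;\bZ/p)\cong\Lambda(x_1,\ldots,x_n)$ recalled in the paper.

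The characters $x_i\in H^1(BG_n;\bZ/p)$ arise as symbols in $K_1^M/p$ and hence lift canonically to bidegree $(1,1)$ in motivic cohomology; their products $b_i=x_1\cdots x_i$ sit in $H^{i,i}(BG_n;\bZ/p)$. Pulling back along the approximation $g:X\to BG_n\times\bP^\infty$ (with $N$ taken $>n$) yields nonzero classes in $H^{i,i}(X;\bZ/p)$ for $i\leq n$. The first containment $H^*_{ur}(X;\bZ/p)\supset H^{*,*}(X;\bZ/p)/(\tau)$ is Lemma 1.4 applied to the smooth projective $X$. For the second containment, I will chase $b_i$ through the naturality square for the Bloch--Ogus/OVV edge map along $g$: on $BG_n$ this edge map sends $b_i$ to the basis vector $x_1\cdots x_i\in\Lambda(x_1,\ldots,x_n)$, which is nonzero. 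By functoriality of $\tau$ and of Zariski restriction, the images of $1,b_2,\ldots,b_n$ in $H^0_{Zar}(X;\mathcal{H}^*_{\bZ/p})$ lie in the distinct Betti degrees $0,2,3,\ldots,n$ and remain $\bZ/p$-linearly independent, which forces the same linear independence already in $H^{*,*}(X;\bZ/p)/(\tau)$.

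For the last assertion, unramified cohomology with $\bZ/p$-coefficients is a retract rational invariant by Merkurjev, so it suffices to distinguish the graded groups $H^*_{ur}(X_{G_n};\bZ/p)$ as $n$ varies. The cleanest route is the degree one part: the Bloch--Ogus spectral sequence degenerates in that corner (since $H^{\geq 1}_{Zar}$ of a constant sheaf on an irreducible variety vanishes), giving $H^1_{ur}(X;\bZ/p)=H^0_{Zar}(X;\mathcal{H}^1_{\bZ/p})=H^1(X;\bZ/p)$ for any smooth projective $X$; and by the approximation, $H^1(X_{G_n};\bZ/p)\cong H^1(BG_n;\bZ/p)\cong(\bZ/p)^n$. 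Hence $\dim_{\bZ/p}H^1_{ur}(X_{G_n};\bZ/p)=n$ is a retract birational invariant that separates the $X_{G_n}$ for distinct $n$.

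The principal obstacle is the naturality step in the second paragraph: one must verify rigorously that $g^*$ intertwines the OVV injection for $X$ with a corresponding injection built at the level of $BG_n$ (as an ind-scheme, or equivalently via its own quasi-projective approximations), so that the nontriviality of $b_i$ as a cohomological invariant of $G_n$ can be transported to nonvanishing in $H^{*,*}(X;\bZ/p)/(\tau)$. This amounts to compatibility of the Bloch--Ogus spectral sequences under pullback by $g$, combined with the stability range of the approximation; the delicacy lies in tracking the two motivic bidegrees alongside the coniveau filtration, rather than in any single conceptual difficulty.
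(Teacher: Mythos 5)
Your proposal follows the same route as the paper for the two containments: the paper's own proof (Theorem 7.4, resting on Lemma 7.1/Corollary 7.2 and Lemma 7.3) likewise runs the Orlov--Vishik--Voevodsky sequence for the smooth projective $X$, quotes $Inv^*((\bZ/p)^n;\bZ/p)\cong H^{*,*}(BG;\bZ/p)/(\tau)\cong\Lambda(x_1,\dots,x_n)$ from Garibaldi--Merkurjev--Serre and Tezuka--Yagita, and transports the classes $b_i$ to $X$ via the approximation $g$. The naturality step you single out as the principal obstacle really is the crux, but be aware that the paper does not treat it in any more detail than you do: its proof of the second containment is essentially the one sentence ``Since $X$ is a (proper) approximation of $BG$,'' the intended mechanism being Lemma 2.1 (the identification $H^{*,*}(X;\bZ/p)/(\tau)\cong H^*(X;\bZ/p)/N^1H^*(X;\bZ/p)$) combined with detection of $b_i$ at the generic point, as packaged in Lemma 7.3; so your more explicit worry about compatibility of the Bloch--Ogus/OVV edge maps under $g^*$ is a refinement rather than a divergence. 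Where you genuinely differ is the final separation of the $X_{G_n}$: the paper uses the top class $b_n$ (equivalently $\alpha_n=Q_0(b_n)$ in $DH^*$), which exists for $G_n$ and has no counterpart for $G_{n'}$ with $n'<n$, whereas you use $\dim_{\bZ/p}H^1_{ur}(X_{G_n};\bZ/p)=\dim_{\bZ/p}H^1(X_{G_n};\bZ/p)=n$ via the degeneration of the Bloch--Ogus spectral sequence in total degree one. Your version is more elementary and arguably cleaner, since it avoids having to argue that nothing in degree $n$ survives in $H^n_{ur}(X_{G_{n'}})$ for $n'<n$; the paper's version has the advantage of exhibiting explicit distinguishing classes in all intermediate degrees, which it then reuses for the $DH^*$ statements.
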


By Serre  [Ga-Me-Se], when $G=SO_{2m+1}$, it is known
\[Inv^*(G;\bZ/2)\cong \bZ/2\{1,w_2,...,w_{2m}\}.\]
\begin{thm}
Let $G=SO_{2m+1}$ and $X=X_{G}$.  Then 
\[  H^{*}_{ur}(X;\bZ/2)\supset  H^{*,*}(X;\bZ/2)/(\tau) 
\supset \bZ/2\{1,w_2,...,w_{2m}\}.\]
Hence each $X_m$ and $X_{m'}$ are not retract rational
equivalent
when $m\not=m'$.
\end{thm}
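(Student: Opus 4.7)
The plan follows the same template as Theorem~1.6 (with Serre's calculation $Inv^*(SO_{2m+1};\bZ/2)\cong\bZ/2\{1,w_2,\dots,w_{2m}\}$ recalled just above the statement replacing the analogous fact for $(\bZ/p)^n$). The first inclusion $H^{*,*}(X;\bZ/2)/(\tau)\subset H^*_{ur}(X;\bZ/2)$ is immediate from Lemma~1.3 together with the identification $H^*_{ur}(X;\bZ/2)\cong H^0_{Zar}(X;\mathcal{H}^*_{\bZ/2})$ recalled before that lemma. For the second inclusion, I would use the identification $H^{*,*}(X;\bZ/2)/(\tau)\cong H^*(X;\bZ/2)/N^1H^*(X;\bZ/2)$ stated after Lemma~1.3; since the classes $g^*(w_{2i})$ lie in pairwise distinct total degrees, the task reduces to showing, for each $1\le i\le m$, that $g^*(w_{2i})\notin N^1H^{2i}(X;\bZ/2)$.

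To this end I would combine Totaro's isomorphism $Inv^*(G;\bZ/p)\cong H^0_{Zar}(BG;\mathcal{H}^*_{\bZ/p})$ with Serre's computation to identify each $w_{2i}$ as a nontrivial unramified class on $BG$. Because $X$ is an Ekedahl approximation of $BG\times\bP^{\infty}$, it is birational on a Zariski open subset to a quotient $(V-S)/G$ of a linear representation $V$, and the generic fibre of $g$ realises a versal $G$-torsor; hence $g^*$ is injective on cohomological invariants in the range $*<N$, and in particular $g^*(w_{2i})$ is nonzero in $H^0_{Zar}(X;\mathcal{H}^{2i}_{\bZ/2})$. On the other hand, $g^*(w_{2i})$ is the reduction of an \'etale class of bidegree $(2i,2i)$, so in the short exact sequence of Lemma~1.3 it lies in the image of $H^{2i,2i}(X;\bZ/2)/(\tau)\hookrightarrow H^0_{Zar}(X;\mathcal{H}^{2i}_{\bZ/2})$. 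Its nontriviality in $H^0_{Zar}$ therefore forces nontriviality in $H^{2i,2i}/(\tau)=H^{2i}(X;\bZ/2)/N^1H^{2i}(X;\bZ/2)$, as required.

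The retract rationality conclusion is then immediate: $H^*_{ur}(-;\bZ/2)$ is an invariant of retract rational equivalence (recalled in the introduction before Lemma~1.3), and the bound $\bZ/2\{1,w_2,\dots,w_{2m}\}\subset H^*_{ur}(X_m;\bZ/2)$ depends nontrivially on $m$, so $X_m$ and $X_{m'}$ cannot be retract rationally equivalent for $m\ne m'$. The main obstacle I expect is the injectivity of $g^*$ on cohomological invariants in the range $*<N$ --- concretely, showing that an Ekedahl approximation carries a versal $G$-torsor compatible with the approximation isomorphism on cohomology. Once this is in hand, the remainder is a formal diagram chase through the Bloch--Ogus machinery of Lemma~1.3.
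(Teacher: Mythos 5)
Your proposal is correct and its skeleton coincides with the paper's: both arguments rest on Serre's computation of $Inv^*(SO_{2m+1};\bZ/2)$, on the identification $Inv^*(G;\bZ/2)\cong H^{*,*}(BG;\bZ/2)/(\tau)\cong\bZ/2\{1,w_2,\dots,w_{2m}\}$ (Garibaldi--Merkurjev--Serre and Tezuka--Yagita, quoted just before the theorem in \S 7), and on the injection $H^{*,*}(X;\bZ/2)/(\tau)\hookrightarrow H^0_{Zar}(X;\mathcal{H}^*_{\bZ/2})\cong H^*_{ur}(X;\bZ/2)$ coming from the Orlov--Vishik--Voevodsky sequence (Lemma 1.3, i.e.\ Lemma 7.1 and Corollary 7.2). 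Where you genuinely differ is in the one step the paper leaves almost entirely implicit (``Since $X$ is a (proper) approximation of $BG$, we have\dots''): transferring the nonvanishing of $w_{2i}$ modulo $N^1$ from $BG$ to $X$. You justify this via the versal-torsor property of the Ekedahl construction: $X$ is birational to $(V-S)/G$, the restriction $H^{2i}(X;\bZ/2)/N^1\hookrightarrow H^{2i}(k(X);\bZ/2)$ sends $g^*w_{2i}$ to the value of the invariant $w_{2i}$ on a versal torsor, and evaluation at a versal torsor is injective on $Inv^*$. The paper's general mechanism for the same step is Lemma 7.3, which instead uses detection by elementary abelian $2$-subgroups: $w_{2i}$ restricts to $x_1\cdots x_{2i}+\dots$, visibly nonzero in $H^*(BA;\bZ/2)/N^1\cong\Lambda(x_1,\dots)$, and $N^1$ is preserved under restriction along compatible approximations. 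Both routes are sound. Yours is arguably cleaner, but it leans on the specific birational model of the Ekedahl approximation, whereas the paper's formal definition of ``approximation'' imposes only the cohomological condition (and the author explicitly remarks that $H^0_{Zar}(X;\mathcal{H}^*_{\bZ/2})$ is not claimed to be independent of the choice of $X$); the paper's route in turn needs maps of approximations covering $BA\to BG$, which it also does not spell out. Your identification of the injectivity of $g^*$ on cohomological invariants as the main obstacle is precisely the point the paper glosses over.
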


We also give examples  of nonzero elements of $Ker (\tau)$ in Lemma 1.4.

\begin{thm}
Let $G$ be a simply connected simple group and $X=X_G$.
Then there is the element $w\in H^4(X;\bZ/p)$ 
such that  
\[H^3_{ur}(X;\bZ/p)\twoheadrightarrow
  Ker(\tau|H^{4,2}(X;\bZ/p)) \supset \bZ/p\{w\}
. \]
Hence $X$ is not retract rational.
 \end{thm}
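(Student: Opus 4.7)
The plan is to combine Lemma 1.3 (the Orlov--Vishik--Voevodsky sequence) in degree $3$ with the integral statement of Lemma 1.2. Setting $* = 3$ in Lemma 1.3 produces
\[ 0 \to H^{3,3}(X;\bZ/p)/(\tau) \to H^0_{Zar}(X;\mathcal{H}^3_{\bZ/p}) \to \Ker(\tau|H^{4,2}(X;\bZ/p)) \to 0, \]
and because $H^0_{Zar}(X;\mathcal{H}^3_{\bZ/p}) \cong H^3_{ur}(X;\bZ/p)$ for $X$ smooth and projective, this immediately yields the required surjection on the right.

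The next step is to produce a nonzero class $w$ inside $\Ker(\tau|H^{4,2}(X;\bZ/p))$. Since $G$ is simply connected simple with $p$-torsion in its cohomology, one has $H^4(BG;\bZ)\otimes\bZ_p \cong \bZ_p$ and, because $X = X(N)$ is an approximation with $N \geq 2p+3$, also $H^4(X;\bZ)\otimes\bZ_p \cong \bZ_p$ with the same generator $w$ supplied by Lemma 1.2. I would lift $w$ to a class in $CH^2(X)/p \cong H^{4,2}(X;\bZ/p)$ via the rational cycle class isomorphism $CH^2(BG)_{(p)} \cong H^4(BG;\bZ)_{(p)}$ valid for simply connected simple $G$, continuing to denote the lift by $w$.

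The crux of the argument, and the main obstacle, is to prove $\tau w = 0$ in $H^{4,3}(X;\bZ/p)$. My plan is to exploit the defining property of the invariant $DH^4(X)/p$ in Lemma 1.2. An element of $H^{4,2}(X;\bZ/p)$ whose $\tau$-image survives to $H^{4,3}$ necessarily comes, under $\tau^2$, from a class in the strong coniveau $\tilde N^1 H^4(X;\bZ/p)$; this is because, by Bloch--Ogus and the Benoist--Ottem comparison, the successive $\tau$-multiplications into étale cohomology land precisely in the image of Gysin pushforwards from smooth divisors. Since Lemma 1.2 asserts $w \in N^1 H^4(X;\bZ)$ but $w \notin \tilde N^1 H^4(X;\bZ) + p H^4(X;\bZ)$, this obstruction forces $\tau w = 0$. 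Making this passage between the difference $DH^*$ of coniveau filtrations and the $\tau$-kernel in motivic cohomology rigorous is the delicate step; I would do it by tracking $w$ through the coniveau (Deligne) spectral sequence alongside the $\tau$-tower of motivic cohomology, as in $\S 2$.

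Finally, having produced a nonzero $\bZ/p\{w\} \subset \Ker(\tau|H^{4,2}(X;\bZ/p))$, its preimage in $H^3_{ur}(X;\bZ/p)$ is nonzero. Since unramified cohomology is a retract-rationality invariant (Propositions 3.1 and 3.4 of Merkurjev, recalled in the introduction) and vanishes in positive degrees on projective spaces over $\bC$, the nontriviality of $H^3_{ur}(X;\bZ/p)$ forces $X$ to fail to be retract rational.
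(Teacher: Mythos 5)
Your first step---specializing the Orlov--Vishik--Voevodsky sequence to $*=3$ and identifying $H^0_{Zar}(X;\mathcal{H}^3_{\bZ/p})$ with $H^3_{ur}(X;\bZ/p)$---agrees with the paper. The gap is in the construction of the class in $\Ker(\tau|H^{4,2}(X;\bZ/p))$, and it occurs at two places. First, the asserted isomorphism $CH^2(BG)_{(p)}\cong H^4(BG;\bZ)_{(p)}$ is false in exactly the cases at issue: for a simply connected simple $G$ with $p$-torsion the cycle map $CH^2(BG)\to H^4(BG)\cong\bZ\{w\}$ has image of index $p$, generated by the Chern class $pw$ (see \S 6, where $CH^*(BG_2)\cong D\{1,2w_4\}\oplus\cdots$, and the paper's explicit remark that ``$pw$ is represented by a Chern class, but $w$ itself is not in the image of the cycle map''). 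So $w$ does not lift to $CH^2(X)/p\cong H^{4,2}(X;\bZ/p)$ in the way you want; it is precisely this failure of surjectivity that produces the nontrivial element of $\Ker(\tau)$. Second, your mechanism for proving $\tau w=0$ rests on the claim that the images of successive $\tau$-multiplications ``land precisely in the image of Gysin pushforwards from smooth divisors,'' i.e.\ in $\tilde N^1$. By Deligne--Paranjape (Lemma 2.1) the image of $\tau^c$ is the \emph{ordinary} coniveau filtration $N^c$, not the strong one $\tilde N^c$; that these two differ is the entire point of Benoist--Ottem and of this paper, so the principle you invoke is exactly the statement being refuted. Consequently the inference ``$w\in N^1$ but $w\notin\tilde N^1+pH^4$, hence $\tau w=0$'' does not go through, and you also give no argument that your class is nonzero in $H^{4,2}(X;\bZ/p)$.

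The paper's actual route avoids both problems by working with $\bZ/p^2$ coefficients. Since $pw$ is a Chern class, there is $c_2'\in H^{4,2}(BG;\bZ/p^2)$ with $(\tau')^2c_2'=pw$; pulling back along the approximation and reducing mod $p$ gives $c'''\in H^{4,2}(X;\bZ/p)$ with $\tau^2c'''=\overline{pw}=0$, whence $\tau c'''=0$ because $\tau:H^{4,3}(X;\bZ/p)\to H^{4,4}(X;\bZ/p)$ is injective (the $m=n$ case of Lemma 7.1); and $c'''\neq 0$ because otherwise $c_2''=px$ would force $\tau^2x=w$ to be algebraic, contradicting the non-algebraicity of $w$. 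If you want to repair your argument, this reduction mod $p$ of the algebraic lift of $pw$ (rather than a lift of $w$ itself) is the missing idea.
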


{\bf Remark.}
It is known $BSpin_n$ for $n\le 14$ are stable rational
[Ko], [Me], \cite{Re-Sc}.
Hence $BG=BSpin_n$ for $7\le n\le 14$ and its approximation
$X=X_G$ are different stable rational type.

At the last three sections, we will do quite different arguments
from the preceding sections,  for
 quadrics $X$ over $\bR$.
Let us write
\[DH^{*}(X;\bZ_p)=DH^{*}_{\acute{e}t}(X;\bZ_p)
\quad \]
where \ \  
$H^*_{\acute{e}t}(X;\bZ_p)= Lim_{\infty\gets s}H^*_{\acute{e}t}(X;\bZ/{p^s}) \cong  Lim_{\infty\gets s}H^{*,*}(X;\bZ/{p^s}).$

In this paper, the \'etale cohomology
(with the integral coefficients $\bZ_2(*)$ for even degrees)
means the motivic cohomology   ;
\[  H_{\acute{e}t}^{2*}(X;\bZ_2(*))\cong \begin{cases}
   H^{2*,2*}(X;\bZ_2)\quad for\ *=even\\
 H^{2*,2*+1}(X;\bZ_2)\quad for\ *=odd.\end{cases}.
\]

Here 
we see the examples that
$X$ are not retract  rational ($H^{4*}_{ur}(X;\bZ_2)\not=\bZ/2$))  while  $DH^{2*}(X;\bZ_2(*))=0$.
 Let   $X=Q^d$ be the anisotropic quadric of dimension  $d=2^n-1$ (i.e. the norm variety).  
Then there are elements
\[ h\in H^2_{\acute{e}t}(X;\bZ_2(1))\quad and  \quad 
\bar \rho_4\in H_{\acute{e}t}^{4}(X;\bZ_2(0)).\]

\begin{thm}  ([Ya6])  
% We have a  ring isomorphism, 
%\[ H_{\acute{e}t}^{2*}(Q^{2^n-1};\bZ_2(*))\cong 
%\bZ_2[h,\bar \rho_4]/(h^{2^n}, 2\bar \rho_4, h\bar \rho_4^{2^{n-2}}, \bar \rho_4h^{2^{n-1}},
% \bar \rho_4^{2^{n-1}}).\]
The ring $H_{\acute{e}t}^{2*}(Q^{2^n-1};\bZ_2(*))$ is multiplicatively generated by $\bar \rho_4$ and $h$ the hyper plane section.
\end{thm}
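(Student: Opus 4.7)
The plan is to deduce the ring structure of $H^{2*}_{\acute{e}t}(Q^{2^n-1};\bZ_2(*))$ over $\bR$ from the well-known cellular cohomology over $\bC$ combined with the Galois action of $\mathrm{Gal}(\bC/\bR)$.

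First, over the algebraic closure the smooth odd-dimensional quadric $Q^{2^n-1}_{\bC}\subset \bP^{2^n}$ is cellular, so its integral $2$-adic \'etale cohomology is $\bZ_2[h]/(h^{2^n})$ with $h$ the hyperplane class, and all odd-degree classes vanish. Each power $h^i$ is represented by a real linear section and is therefore Galois-invariant, so the subring they span descends to $H^{2*}_{\acute{e}t}(X;\bZ_2(*))$ over $\bR$ and provides the $h$-generators.

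Next I would analyse the Hochschild--Serre spectral sequence
\[
E_2^{p,q}=H^p\bigl(\mathrm{Gal}(\bC/\bR);\,H^q_{\acute{e}t}(Q^{2^n-1}_{\bC};\bZ_2(j))\bigr)\Longrightarrow H^{p+q}_{\acute{e}t}(Q^{2^n-1};\bZ_2(j)).
\]
Because the $\bC$-cohomology is concentrated in even $q$ on each Galois-invariant line $\bZ_2\cdot h^i$, the $E_2$-page decomposes as a direct sum of copies of the ground-field Galois cohomology $H^p(\mathrm{Gal}(\bC/\bR);\bZ_2(j-i))$, one for each power of $h$. Outside column $p=0$ the nontrivial entries are all copies of $\bZ/2$, corresponding to products of $h^i$ with powers of $\rho=[-1]$ pulled back from $\mathrm{Spec}\,\bR$.

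Then I would identify $\bar\rho_4\in H^{4}_{\acute{e}t}(X;\bZ_2(0))$ with the class pulled back from $\rho^4$ on $\mathrm{Spec}\,\bR$ and verify that multiplication by $\bar\rho_4$ and $h$ exhausts the entire $E_\infty$-page: the $h^{b}$-alone terms account for the column-zero classes, and the $\bar\rho_4^{a}\cdot h^{b}$ products account for every $\bZ/2$-summand in the positive even-column strata. Combined with the relation $h^{2^n}=0$ inherited from the base change, this gives the asserted multiplicative generation.

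The main obstacle is the final step: one must rule out hidden extensions in the passage $E_\infty\to H^{2*}_{\acute{e}t}(X;\bZ_2(*))$ and confirm that no product $\bar\rho_4^{a}\cdot h^{b}$ prematurely vanishes in the abutment. This amounts to controlling the motivic cohomology of the anisotropic norm variety $Q^{2^n-1}$ and its cycle map to \'etale cohomology, which is carried out in [Ya6].
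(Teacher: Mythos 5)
You begin from an incorrect description of the complex fibre: the odd-dimensional quadric $Q^{2^n-1}_{\bC}$ is indeed cellular, but its integral $2$-adic cohomology ring is not $\bZ_2[h]/(h^{2^n})$. There is a second multiplicative generator $y$ (the class of a maximal linear subspace) with $2y=h^{2^{n-1}}$, so the ring is $\bZ_2[h,y]/(h^{2^n},\,2y-h^{2^{n-1}},\,y^2)$ --- this is exactly what the paper records for $Q^7$ inside the proof of Lemma 12.2. In particular $h^i$ is divisible by $2$ for $i\ge 2^{n-1}$, so the Galois module you feed into the Hochschild--Serre spectral sequence, and hence your $E_2$-page, is already wrong.

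The more serious gap is that you treat the descent spectral sequence as if it degenerated at $E_2$ and reduce the problem to ``ruling out hidden extensions.'' All of the arithmetic content of the statement --- that only $\bar\rho_4$ and its powers survive among the $\rho$-tower classes, and the relations $\bar\rho_4^{2^{n-1}}=0$, $h\bar\rho_4^{2^{n-2}}=0$, $\bar\rho_4h^{2^{n-1}}=0$ of Theorem 12.1 --- must come from nontrivial differentials, and these are governed precisely by the anisotropy of the quadric: for the split real quadric the same $E_2$-page yields unbounded $\rho$-towers and a completely different answer. Your outline never uses anisotropy except to defer ``control of the motivic cohomology of the norm variety'' to [Ya6], which is to defer the entire proof. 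The paper's own route avoids the descent spectral sequence altogether: it decomposes the motive as $M(Q^{2^n-1})\cong M_n\oplus M_{n-1}\otimes M(\tilde\bP^{2^{n-1}-1})$ (Section 12), imports the computation $H^*_{\acute{e}t}(M_n;\bZ/2)\cong\bZ/2[\rho]/(\rho^{2^{n+1}-1})$ over $\bR$ together with its integral lifts $\bar\rho_i=\delta(\tau\rho^{i-1})$ and the Chow ring $CH^*(M_n)$ (Sections 10--11, quoting [Ya2], [Ya7]), and then identifies generators by hand ($c_2=\bar\rho_4^2$, $c_1=\bar\rho_4^3$, $\pi=h^{2^n-1}$). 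If you want to salvage the descent approach you would need to compute the differentials explicitly, which in effect reproduces the Rost-motive computation you are trying to bypass.
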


\begin{thm}
Let $X_n=Q^{2^n-1}$, $n\ge  2$ the norm variety.  Then
\[ DH^{2*}(X_n;\bZ_2(*))=0, \]
\[H_{ur}^{2*}(X_n;\bZ_2(*))\supset 
 \bZ/2[\bar \rho_4]/(
\bar\rho_4^{2^{n-1} }).\]
Hence for $n\not =n'$, we see that $X_n$ and $X_{n'}$ are not retract birational equivalent. \end{thm}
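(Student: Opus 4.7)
The plan is to reduce each assertion to the ring description given by Theorem 1.8, namely that $H_{\acute{e}t}^{2*}(X_n;\bZ_2(*))$ is multiplicatively generated by $h$ and $\bar\rho_4$. Once this reduction is done, everything becomes a bidegree bookkeeping in motivic cohomology together with Lemma 1.5 for the unramified part.

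For the first claim $DH^{2*}(X_n;\bZ_2(*))=0$, I would exhibit both generators as Gysin images, placing them in the strong coniveau $\tilde N^1$. The hyperplane class $h$ is the Gysin pushforward of the fundamental class along a smooth hyperplane section $X\cap H\hookrightarrow X$. For $\bar\rho_4$, I would use the Rost construction underlying the norm variety to realize $\bar\rho_4$ modulo $2$ as a class supported on a proper closed subvariety, hence as an element of $\tilde N^1$ modulo $2$. Every positive-degree monomial in $h$ and $\bar\rho_4$ then lies in $\tilde N^1$, so the quotient $N^1/(2,\tilde N^1)$ vanishes.

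For the inclusion $H_{ur}^{2*}(X_n;\bZ_2(*)) \supset \bZ/2[\bar\rho_4]/(\bar\rho_4^{2^{n-1}})$, I would apply Lemma 1.5 to get $H^{*,*}(X;\bZ/2)/(\tau) \hookrightarrow H_{ur}^*(X;\bZ/2)$, and show that the powers $\bar\rho_4^i$ with $0\le i<2^{n-1}$ are linearly independent in $H^{*,*}/(\tau)$. The bidegree of $\bar\rho_4$ (weight strictly less than topological degree) prevents its mod-$2$ reduction from being a $\tau$-multiple, and by Theorem 1.8 the powers $\bar\rho_4^i$ are nonzero in motivic cohomology for $i<2^{n-1}$, with $\bar\rho_4^{2^{n-1}}=0$ forced by the dimension bound $\dim X_n=2^n-1$. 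This yields the embedded truncated polynomial algebra inside $H_{ur}^*$. Non retract-birational equivalence of $X_n$ and $X_{n'}$ for $n\ne n'$ then follows from retract-rationality invariance of $H_{ur}^*$ (Proposition 3.4 of [Me]), since the algebras $\bZ/2[\bar\rho_4]/(\bar\rho_4^{2^{n-1}})$ have different total $\bZ/2$-dimensions.

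The main obstacle is pinning down the motivic bidegree and the $\tau$-divisibility of $\bar\rho_4$ precisely enough to conclude that its mod-$2$ powers are independent in $H^{*,*}/(\tau)$, while simultaneously showing that the same class lies in $\tilde N^1$ modulo $2$. Both facts rely on the fine motivic structure of the Rost--Voevodsky norm variety beyond the bare ring description of Theorem 1.8, in particular on the explicit geometric realization of $\bar\rho_4$ and its truncation at $2^{n-1}$ dictated by $\dim X_n = 2^n-1$.
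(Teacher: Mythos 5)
Your plan for the first assertion rests on a claim that is false and that the paper explicitly warns against: you propose to realize $\bar\rho_4$ (mod $2$) as a class supported on a proper closed subvariety, hence as an element of $\tilde N^1$. In fact the Remark following this theorem in the introduction states that $\bar\rho_4\not\in N^1H^{2*}(X;\bZ_2(*))$, so a fortiori $\bar\rho_4\not\in\tilde N^1$ (note also that ``supported on a closed subvariety'' would only give membership in $N^1$, not in the strong filtration $\tilde N^1$, which requires a Gysin image from a smooth proper $Y$). Worse, your proposal is internally inconsistent: if $\bar\rho_4$ were in $\tilde N^1\subset N^1$ mod $2$, then by Deligne's identification $N^1=\Img(\times\tau)$ (Lemma 2.1) its reduction would be a $\tau$-multiple, which is exactly what your argument for the second assertion (the unramified containment via $H^{*,*}(X;\bZ/2)/(\tau)\hookrightarrow H^*_{ur}(X;\bZ/2)$) needs to rule out. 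You cannot have both.

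The paper's mechanism is the opposite of what you propose: one proves (Lemma 12.5) that $\bar\rho_4^i\not\in N^1H^{*}(M_n;\bZ_2(*))$ for all $i\ge 1$. Granting this, the first claim follows because the only other generator is $h$, which is a first Chern class, so $Ideal(h)\subset\tilde N^1$ by Frobenius reciprocity (Lemma 2.2); hence $N^1=\tilde N^1$ and $DH^{2*}=0$. The second claim follows because the classes $\bar\rho_4^i$ then survive in $H^{4*}(X;\bZ/2)/N^1\subset H^{4*}_{ur}(X;\bZ/2)$. The proof that $\bar\rho_4^i\not\in N^1$ is not a bidegree bookkeeping: supposing $\tau'x=\bar\rho_s$ for an integral class $x$, one reduces mod $2$, observes that $Q_0(r(x))$ must vanish by integrality, and then computes in the motivic cohomology of the Rost motive (Theorem 10.1, Lemma 10.2) that the candidate preimages satisfy $Q_0(\rho^{n+1}\tau^{-1})=\rho^{n+2}\tau^{-2}\neq 0$, a contradiction. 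This $Q_0$-computation is the essential content of the theorem and is absent from your plan. (Your appeal to $\dim X_n=2^n-1$ to force $\bar\rho_4^{2^{n-1}}=0$ is also not valid over $\bR$, where \'etale cohomology does not vanish above twice the dimension; the truncation comes from the ring presentation in Theorem 12.1.)
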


%Indeed $\bar \rho_4$ is not in $DH^*(X;\bZ_2)$ while it is 
%not in $Im(cl)$.

{\bf Remark.}
If $\bar \rho_4 \in \tilde N^1H^{2*}(X;\bZ_2(*))$,
the above theorem was just corollary of the 
Frobenius reciprocity (Lemma 2.2).  But it does not hold (moreover , 
we see $\bar \rho_4\not \in N^1H^{2*}(X;\bZ_2(*))$). 

\section{two coniveau filtrations}

Let us recall the coniveau filtration
of the cohomology with coefficients in  $A$ for $A=\bZ,\bZ_p,$
or $\bZ/p$,
\[ N^cH^i(X;A)=\sum_{Z\subset X} ker(j^*:H^i(X;A)\to
H^i(X-Z,A))\]
where $Z\subset X$ runs through the closed subvarieties of codimension at least $c$ of $X$,
and $j:X-Z\subset X$ is the complementary open immersion.

Similarly, we can define the $strong$ coniveau filtration 
by 
\[ \tilde N^cH^i(X;A)=\sum_{f:Y\to X} im(f_*:H^{i-2r}(Y;A)\to
H^i(X,A))\]
where the sum is over all proper morphism $f: Y\to X$ from a smooth complex variety $Y$ of $dim(Y)=dim(X)-r$ with $r\ge c$,
and $f_*$ its transfer (Gysin map).
It is immediate that $\tilde N^cH^*(X;A)\subset 
N^cH^*(X;A)$.

It is known that 
 when $X$ is proper, 
 $ \tilde N^cH^i(X;\bQ)=N^cH^i(X;\bQ)$ by Deligne.  However 
Benoist and Ottem (\cite{Be-Ot}) recently show that the above two coniveau filtrations are not equal
for $A=\bZ$.

Let $G$ be an algebraic group such that $H^*(BG;\bZ)$ has
$p$-torsion for the classifying space $BG$ is 
defined by Totaro \cite{To}, and Bogomolov \cite{Bo}. 
Then let us say that  an (Ekedahl) $ approximation$ for $BG$  (for $degree \le N$) is 
the projective (smooth)  variety $X=X_G(N)$ such that
there is a map $g:X\to BG\times \bP^{\infty}$ with \[g^*:H^*(BG\times \bP^{\infty};A)\cong H^*(X;A) \quad 
for \ *<N. \]

In the paper \cite{YaC},  we try to 
 compute the stable birational
invariant of $X$ (Proposition 2.4 in \cite{Be-Ot})
\[ DH^*(X;A)=N^1H^*(X;A)/(\tilde N^1H^*(X;A)) \]
for  projective approximations $X$ for $BG$ (\cite{Ek},\cite{To},\cite{Pi-Ya}).

Here we recall the 
Bloch-Ogus \cite{Bl-Og} spectral sequence such that its $E_2$-term is given by
\[E(c)_2^{c,*-c}\cong 
H_{Zar}^c(X,\mathcal{H}_{A}^{*-c})\Longrightarrow H_{\acute{e}t}^*(X;A)\]
where $\mathcal{H}_{A}^*$ is the  Zariski sheaf induced from the  presheaf  given by $U\mapsto H_{\acute{e}t}^*(U;A)$
for an open $U\subset X$.

The filtration for this spectral sequence is defined as the coniveau filtration 
  \[N^cH_{\acute{e}t}^*(X;A)= F(c)^{c,*-c}\]
where the infinite term $ E(c)^{c,*-c}_{\infty}\cong  F(c)^{c,*-c}/F(c)^{c+1,*-c-1}$.

Here we recall the motivic cohomology 
$H^{*,*'}(X;\bZ/p)$ defined by Voevodsky and Suslin (\cite{Vo1},\cite{Vo3},\cite{Vo4})
so that
\[ H^{i,i}(X;\bZ/p)\cong H^i_{\acute{e}t}(X;\bZ/p)\cong H^i(X;\bZ/p).\]

Let us write  
$H^*_{\acute{e}t}(X;\bZ )$ simply 
by $H^*_{\acute{e}t}(X)$ as usual.
Note that  $H^*_{\acute{e}t}(X)\not \cong H^*(X)$ in general,
while we have the natural map $H_{\acute{e}t}^*(X)\to H^*(X)$.

Let $0\not =\tau \in H^{0,1}(Spec(\bC);\bZ/p)$.  Then by the  multiplying $\tau$, we can define a map
$H^{*,*'}(X;\bZ/p)\to H^{*,*'+1}(X;\bZ/p)$.
By Deligne ( foot note (1) in Remark 6.4 in \cite{Bl-Og}) and
Paranjape (Corollary 4.4 in \cite{Pa}), it is proven that
there is an isomorphism of the coniveau spectral sequence
with the $\tau$-Bockstein spectral sequence $E(\tau)_r^{*,*'}$
(see also \cite{Te-Ya2}, \cite{Ya1}).

\begin{lemma} (Deligne)
Let $A=\bZ/p$.  Then we have 
the isomorphism of spectral sequence 
$E(c)_r^{c,*-c}\cong E(\tau)_{r-1}^{*,*-c} \quad 
for \ r\ge 2.$ 
Hence the filtrations are the same, i.e. 
$N^cH_{\acute{e}t}^*(X;\bZ/p)= F_{\tau}^{*,*-c}=Im(\times \tau^c:H^{*,*-c}(X;\bZ/p)) $. Thus 
we have the isomorphism
\[ H^{*,*}(X;\bZ/p)/(\tau)
\cong   H^{*}(X;\bZ/p)/N^1H^*(X;\bZ/p).\] 
\end{lemma}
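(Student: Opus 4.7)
The plan is to identify the two spectral sequences at their $E_1$-pages via Gersten-type resolutions, and then track the differentials so that the shifted isomorphism $E(c)_r\cong E(\tau)_{r-1}$ propagates through all pages. First, I would recall the Bloch-Ogus description of the coniveau spectral sequence: filtering by codimension of support yields the Gersten complex
\[
E(c)_1^{c,*-c} = \bigoplus_{x \in X^{(c)}} H^{*-c}_{\acute{e}t}(k(x);\bZ/p),
\]
with $d_1$ given by residues along codimension-one valuations, converging to $H^*_{\acute{e}t}(X;\bZ/p)$.

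Second, I would construct the $\tau$-Bockstein spectral sequence from the exact couple obtained by iterating the long exact sequence attached to $\tau\cdot : H^{*,*'-1}(X;\bZ/p)\to H^{*,*'}(X;\bZ/p)$. By Beilinson-Lichtenbaum this targets $H^{*,*}(X;\bZ/p)\cong H^*_{\acute{e}t}(X;\bZ/p)$, with the filtration
\[
F_\tau^{*,*-c} = \Img\bigl(\tau^c\cdot : H^{*,*-c}(X;\bZ/p)\to H^{*,*}(X;\bZ/p)\bigr).
\]
Using Voevodsky's motivic Gersten resolution, the $E_1$-page of this spectral sequence is $\bigoplus_{x\in X^{(c)}} H^{*-c,*-c}(k(x);\bZ/p)$, which by the norm-residue (Bloch-Kato) isomorphism coincides with $\bigoplus_{x\in X^{(c)}} H^{*-c}_{\acute{e}t}(k(x);\bZ/p)$, exactly matching the Bloch-Ogus $E_1$-term. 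The $d_1$'s agree by naturality of the residue maps.

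The main obstacle --- and the source of the indexing shift $E(c)_r\cong E(\tau)_{r-1}$ --- is matching the higher pages. On each field stalk $H^{n,m}(k(x);\bZ/p)=0$ for $n>m$, so the $\tau$-Bockstein on each term of the Gersten complex collapses, and the extra differential needed to witness this degeneration is precisely what produces the shift. Carrying this matching out rigorously is the content of Paranjape's Corollary 4.4 in \cite{Pa}, extending Deligne's footnote in \cite{Bl-Og} Remark 6.4; I would simply invoke these results rather than redo the diagram chase. Once the shifted isomorphism is established, reading off the $c=1$ piece gives $N^1H^*_{\acute{e}t}(X;\bZ/p)=\Img(\tau\cdot)$, and therefore
\[
H^{*,*}(X;\bZ/p)/(\tau)\cong H^*(X;\bZ/p)/N^1H^*(X;\bZ/p),
\]
as claimed.
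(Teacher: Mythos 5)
Your proposal is correct and takes essentially the same route as the paper: the paper gives no independent proof of this lemma, but simply invokes Deligne's footnote in Remark 6.4 of \cite{Bl-Og} and Paranjape's Corollary 4.4 in \cite{Pa}, which is exactly what you do at the decisive step (matching the higher pages and producing the index shift $r \mapsto r-1$), with your identification of the $E_1$-pages via Gersten complexes and Beilinson--Lichtenbaum serving as reasonable supporting context. One small indexing slip: with the convention $E(c)_1^{c,*-c}\Rightarrow H^*_{\acute{e}t}(X;\bZ/p)$ the Gersten term should read $\bigoplus_{x\in X^{(c)}}H^{*-2c}_{\acute{e}t}(k(x);\bZ/p)$ rather than $H^{*-c}$, but this does not affect the argument.
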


We recall here the Frobenius reciprocity law.

\begin{lemma} (reciprocity law)
If $a\in \tilde N^*H^{2*}(X;A)$, then for each $g\in H^{*'}(X;A)$ we have
$ag\in \tilde N^*H^{2*+*'}(X;A)$.
\end{lemma}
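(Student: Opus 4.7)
The plan is to reduce the claim to the classical projection formula (Frobenius reciprocity) for proper Gysin maps, which is exactly what gives the lemma its name.

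First I would unwind the definition. By assumption $a \in \tilde N^* H^{2*}(X;A)$ means that $a$ is a finite sum $a=\sum_i f_{i,*}(b_i)$ where, for each $i$, $f_i: Y_i \to X$ is a proper morphism from a smooth variety $Y_i$ with $\dim Y_i = \dim X - r_i$, $r_i \ge *$, and $b_i \in H^{2*-2r_i}(Y_i;A)$. By bilinearity of the cup product, it suffices to treat the case $a = f_*(b)$ for a single such $f:Y\to X$ and $b \in H^{2*-2r}(Y;A)$ with $r \ge *$.

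Next I would invoke the projection formula for the proper Gysin map $f_*$: for any $g \in H^{*'}(X;A)$,
\[
f_*(b) \cdot g \;=\; f_*\bigl(b \cdot f^*(g)\bigr) \quad \text{in } H^{2*+*'}(X;A).
\]
The element $b \cdot f^*(g)$ lies in $H^{2*-2r+*'}(Y;A)$, so $ag = f_*(b) \cdot g$ lies in the image of $f_*: H^{2*+*'-2r}(Y;A) \to H^{2*+*'}(X;A)$. Since the codimension $r$ satisfies $r \ge *$, this image is contained in $\tilde N^* H^{2*+*'}(X;A)$ by the defining formula of the strong coniveau filtration. Summing over the index $i$ gives the general case.

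The main obstacle, such as it is, is not really an obstacle: one just needs to know that the projection formula is available in the cohomology theory in use. For Betti cohomology with coefficients in $A=\bZ,\bZ_p,\bZ/p$ on smooth complex varieties (the setting of $\S 2$), the Gysin map $f_*$ for a proper morphism between smooth varieties exists and satisfies $f_*(b \cdot f^*g) = f_*(b) \cdot g$; analogous statements hold for the \'etale and motivic variants used later. Once this is granted, the proof is a one-line application of it. No filtration or spectral-sequence argument is needed — only the compatibility of pushforward along $f$ with the $H^*(X;A)$-module structure on $H^*(Y;A)$ induced by $f^*$.
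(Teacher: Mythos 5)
Your proposal is correct and follows the same route as the paper's own proof: reduce to a single term $a=f_*(b)$ and apply the projection formula $f_*(b\cdot f^*g)=f_*(b)\cdot g$, noting the codimension bound is preserved. The paper's version is just a terser statement of the identical argument.
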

\begin{proof}
Suppose we have $f:Y\to X$ with $f_*(a')=a$.
Then \[  f_*(a'f^*(g))=f_*(a')g=ag\]
 by the Frobenius reciprocity law.
\end{proof} 

Let $G$ be an algebraic group (over $\bC$) and $r$
be  a complex representation $r:G\to U_n$ the unitary group.  Then  we can define
the Chern class in $H^*(BG)$ by $c_i=r^*c_i^U$. Here the Chern classes
$c_i^U$  
in  $H^*(BU_n)\cong \bZ[c_1^U,...,c_n^U]$ ([Qu1]) are defined by
using  the Gysin map as $c_n^U=i_{n,*}(1)$ for

\[ i_{n,*} : H^*(BU_n)\cong H_{U_n}^*(pt.)\stackrel{i_{n,*}}{\to} H_{U_n}^{*+2i}(\bC^{\times i})\cong H^{*+2i}(BU_n)\]
where $H_{U_n}(X)=H^*(EU_n\times_{U_n}X)$ is the $U_n$-equivaliant cohomology.

Let us write  by $Ch^*(X;A)$ the Chern subring
which is the subring of $H^*(X;A)$ multiplicatively generated by all Chern classes.
\begin{lemma}
We have a quotient map
\[ N^1H^*(X;A)/(IdealCh^*(X;A))\twoheadrightarrow DH^*(X;A).\]
\end{lemma}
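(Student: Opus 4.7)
The plan is to show that the ideal $(Ch^+)\cdot H^*(X;A)$ generated by positive-degree Chern classes sits inside the strong coniveau filtration $\tilde N^1H^*(X;A)$; once this is established, the quotient map is forced by the inclusion $\tilde N^1 \subseteq N^1$ together with the definition $DH^*(X;A)=N^1H^*(X;A)/\tilde N^1H^*(X;A)$.

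First, I would verify that every Chern class $c_i\in H^{2i}(X;A)$ with $i\ge 1$ lies in $\tilde N^iH^{2i}(X;A)$. For the top Chern class of the universal bundle this is built into the definition given just above the lemma: $c_n^U=i_{n,*}(1)$ is literally a Gysin pushforward from a smooth codimension-$n$ subvariety, and pullback along a map of smooth varieties $X\to BG\to BU_n$ preserves the strong coniveau filtration (the relevant source is replaced by its base change, which is again smooth after a small perturbation/resolution argument in the proper approximation setting). For a non-top Chern class $c_i(E)$ of a rank-$n$ bundle, I would invoke the splitting principle via the projective bundle $\pi\colon \mathbb{P}(E)\to X$: the class $c_i(E)$ is a polynomial pushforward $\pi_{*}$ of a monomial in the tautological class $\xi=c_1(\mathcal{O}(1))$, and each such pushforward is of strong coniveau $\ge i$ because $\pi$ is a proper morphism between smooth varieties.

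Next, I would apply the Frobenius reciprocity law (Lemma 2.2): for any $c\in Ch^{i}(X;A)$ with $i\ge 1$ and any $g\in H^{*}(X;A)$, the product $cg$ lies in $\tilde N^{i}H^{*+2i}(X;A)\subseteq \tilde N^1H^{*+2i}(X;A)$. Extending additively, every element of the ideal $(Ch^+)\cdot H^*(X;A)\subset H^*(X;A)$ is a sum of elements of the strong coniveau filtration, so the whole ideal is contained in $\tilde N^1H^*(X;A)$. Intersecting with $N^1H^*(X;A)$ (which contains $\tilde N^1H^*(X;A)$ automatically), we conclude $(Ch^+)\cdot H^*(X;A)\cap N^1H^*(X;A)\subseteq \tilde N^1H^*(X;A)$, and the surjection
\[ N^1H^*(X;A)/(Ch^+\cdot H^*(X;A))\;\twoheadrightarrow\; N^1H^*(X;A)/\tilde N^1H^*(X;A)=DH^*(X;A) \]
follows by the third isomorphism theorem.

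The main obstacle I anticipate is the second half of the first step: placing a \emph{lower} Chern class in $\tilde N^i$ (with a \emph{smooth} source) rather than merely in $N^i$. The splitting-principle argument is clean in principle, but one must be careful that the projective bundle $\mathbb{P}(E)\to X$ and its auxiliary flag bundles remain smooth and proper in the Ekedahl approximation context; this is where one needs to quietly invoke that $X=X_G(N)$ is smooth projective and that pulling back a universal construction from $BU_n$ along $g\colon X\to BG\times\mathbb{P}^\infty$ produces bundles whose associated projective/flag bundles are again smooth and proper over $X$. Everything else (the Frobenius reciprocity step and the final quotient identification) is formal.
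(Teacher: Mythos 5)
Your overall architecture is the right one, and it is what the paper intends: place the positive-degree Chern classes in $\tilde N^1$, propagate to the whole ideal by the Frobenius reciprocity law (Lemma 2.2), and conclude formally from $\tilde N^1\subseteq N^1$. The reciprocity step and the final quotient identification are correct. (The paper states the lemma without an explicit proof, but the Gysin description $c_n^U=i_{n,*}(1)$ placed just before it, together with Lemma 2.2, is exactly this argument.)

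The gap is in your first step, for the non-top Chern classes. The splitting-principle argument is incompatible with the paper's definition of $\tilde N^c$: the sum there runs over proper $f:Y\to X$ with $Y$ smooth of $\dim Y=\dim X-r$ and $r\ge c\ge 1$, so the source must have dimension strictly \emph{smaller} than $\dim X$. The projective bundle $\pi:\mathbb{P}(E)\to X$ has $\dim \mathbb{P}(E)=\dim X+(n-1)>\dim X$, so $\pi_*$ of anything contributes nothing to $\tilde N^c$ for any $c\ge 1$; ``proper morphism between smooth varieties'' is not sufficient. Likewise, your appeal to pullback-stability of $\tilde N^*$ along $X\to BU_n$ is precisely the kind of functoriality that is delicate for the strong coniveau filtration (base changes of smooth sources need not be smooth, and one cannot simply resolve without changing the pushforward); it should be avoided rather than asserted. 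The correct route, consistent with the paragraph following the lemma in the paper, is: each $c_i$ with $i\ge 1$ is algebraic, i.e.\ $c_i=cl(z)$ for some $z\in CH^i(X)\otimes A$; write $z=\sum_j n_j[Z_j]$ with $Z_j\subset X$ closed of codimension $i$ and take resolutions $\tilde Z_j\to Z_j\subset X$ (we are in characteristic $0$). Each composite $\tilde Z_j\to X$ is proper from a smooth variety of dimension $\dim X-i$ and pushes $1$ forward to $[Z_j]$, so $c_i\in\tilde N^iH^{2i}(X;A)\subseteq\tilde N^1H^{2i}(X;A)$. With that substitution, the remainder of your proof (Frobenius reciprocity and the third-isomorphism-theorem step) goes through unchanged.
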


The following lemma is proved by 
 Colliot Th\'er\`ene and Voisin
\cite{Co-Vo} by using the
affirmative answer of the Bloch-Kato conjecture
by Voevodsky. (\cite{Vo3}. \cite{Vo4})
\begin{lemma}  (\cite{Co-Vo})  Let $X$ be a smooth complex variety.
Then any torsion element in $H^*(X)$ is in
$N^1H^*(X)$.
\end{lemma}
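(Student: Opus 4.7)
The plan is to reinterpret the claim as the $p$-torsion-freeness of a certain Zariski sheaf and then invoke the Bloch--Kato theorem to kill the torsion locally. First I would reduce to the case $p\alpha=0$ for some prime $p$: writing a general torsion class as a sum of $p$-primary parts and inducting on the order of annihilation reduces the problem to showing $H^n(X;\bZ)[p]\subset N^1H^n(X;\bZ)$ for each $p$. Next, the Bloch--Ogus spectral sequence recalled at the start of this section gives a canonical injection
\[ H^n(X;\bZ)/N^1H^n(X;\bZ)\hookrightarrow H^0_{Zar}(X;\mathcal{H}^n_{\bZ}), \]
where $\mathcal{H}^n_{\bZ}$ is the Zariski sheafification of $U\mapsto H^n(U;\bZ)$. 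Since global sections of a $p$-torsion-free sheaf form a $p$-torsion-free group, it suffices to show that the sheaf $\mathcal{H}^n_{\bZ}$ itself has no $p$-torsion.

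Sheafifying the Bockstein long exact sequence attached to $0\to\bZ\xrightarrow{p}\bZ\to\bZ/p\to 0$ on the small Zariski site of $X$, the $p$-torsion subsheaf of $\mathcal{H}^n_{\bZ}$ is precisely the image of the Bockstein sheaf map $\partial\colon\mathcal{H}^{n-1}_{\bZ/p}\to\mathcal{H}^n_{\bZ}$. So the goal becomes to show $\partial=0$ as a map of Zariski sheaves, which by checking on stalks amounts to $\partial_\eta=0$ on $H^{n-1}(F;\bZ/p)\to H^n(F;\bZ)$ for $F$ running through the residue fields of scheme points of $X$ (equivalently, the function fields of subvarieties).

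Here the Bloch--Kato theorem of Voevodsky enters in the form of the isomorphism $K^M_{n-1}(F)/p\cong H^{n-1}_{\acute{e}t}(F;\bZ/p)$ for every field $F$ of characteristic zero, so that every mod-$p$ class is a sum of Milnor symbols $\{a_1,\ldots,a_{n-1}\}$ with $a_i\in F^\times$. Each such symbol lifts tautologically to $K^M_{n-1}(F)$, and the natural map from integral Milnor K-theory to integral \'etale cohomology (trivializing the Tate twist over~$\bC$) yields an integral lift in $H^{n-1}(F;\bZ)$. The integral Bockstein of such a lift is zero, so $\partial_\eta$ kills every generator and is therefore zero. The main obstacle is precisely this invocation of Bloch--Kato: once that theorem is taken as input, the argument is essentially formal. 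A minor technical point is that the lemma is phrased for Betti cohomology while Bloch--Ogus is given \'etale-theoretically, but on the torsion subgroup these agree over $\bC$ by Artin's comparison, so the two coniveau filtrations coincide on torsion classes.
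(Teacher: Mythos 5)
Your argument is correct and is essentially the proof the paper points to: the paper gives no proof of its own here, citing Colliot-Th\'el\`ene--Voisin, and their argument is exactly yours --- the edge map $H^n(X;\bZ)/N^1H^n(X;\bZ)\hookrightarrow H^0_{Zar}(X;\mathcal{H}^n_{\bZ})$ from Bloch--Ogus, followed by torsion-freeness of the sheaf $\mathcal{H}^n_{\bZ}$, proved by killing the Bockstein on stalks because Bloch--Kato writes every mod-$p$ class of a function field as a sum of symbols, each of which lifts integrally. No gaps worth flagging.
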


 \section{the main lemmas}

The Milnor operation $Q_n$  (in $H^*(-;\bZ/p)$)
is defined by $Q_0=\beta$ and for $n\ge 1$
\[ Q_n=P^{\Delta_n} \beta-\beta P^{\Delta_n}, \quad \Delta_n=(0,..,0,\stackrel{n}{1},0,...). \]
(For details see \cite{Mi}, \S 3.1 in \cite{Vo1})
where $\beta$ is the Bockstein operation and $P^{\alpha}$
for $\alpha=(\alpha_1,\alpha_2,...)$ is the fundamental base of the module of finite sums of products of reduced powers.
(For example $P^{\Delta_i}(y)=y^{p^i}$ for $|y|=2$.
and  $Q_n$ is a derivative.)
%i.e., $Q_n(xy)=Q_n(x)y+deg(x)xQ_n(y)$.
\begin{lemma} Let $f_*$ be the transfer (Gysin) map (for 
 proper smooth) $f:X\to Y$.
Then  $Q_nf_*(x)=f_*Q_n(x)$ for $x\in H^*(X;\bZ/p)$.
\end{lemma}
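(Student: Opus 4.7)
The plan is to reduce the commutation statement to the single vanishing $Q_n(u)=0$ for the Thom class of a complex vector bundle, and then to exploit the defining formula $Q_n=P^{\Delta_n}\beta-\beta P^{\Delta_n}$ together with the fact that Thom and Chern classes of complex bundles lift to integral cohomology.

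First I would factor the proper morphism $f:X\to Y$ of smooth varieties through the graph embedding $\Gamma_f:X\to X\times Y$ followed by the smooth projection $\pi:X\times Y\to Y$, so that $f_*=\pi_*\circ\Gamma_{f,*}$. For the smooth proper projection $\pi$, integration along the fiber $X$ commutes with every stable cohomology operation by naturality (the suspension/K\"unneth description), so I am reduced to handling closed embeddings of smooth varieties $i:Z\hookrightarrow W$.

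For such an embedding, with normal bundle $N$ of rank $c$, the Gysin map factors via the Thom isomorphism as
\[
H^{*-2c}(Z;\bZ/p)\xrightarrow{\ \cup u\ } H^*_Z(W;\bZ/p)\longrightarrow H^*(W;\bZ/p),
\]
where $u$ is the Thom class of $N$ (via deformation to the normal cone). The rightmost ``forget supports'' map is induced by a map of pairs and so commutes with $Q_n$. Because $Q_n$ is a derivation, the identity $Q_n(u\cdot x)=Q_n(u)\cdot x+u\cdot Q_n(x)$ reduces the whole commutation statement to the single vanishing $Q_n(u)=0$.

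To verify $Q_n(u)=0$, I would use that the complex bundle $N$ is canonically oriented, so $u$ is the $\bmod\,p$ reduction of an integral Thom class and consequently $\beta(u)=0$. By a Wu-type formula (obtained by the splitting principle from the line-bundle case, where $u$ is literally $c_1$), one has $P^{\Delta_n}(u)=u\cdot q_n(N)$ for a polynomial $q_n(N)$ in the Chern classes of $N$; since Chern classes lift to integral cohomology, $\beta(q_n(N))=0$, and the derivation property of $\beta$ gives $\beta P^{\Delta_n}(u)=\beta(u)\cdot q_n(N)+u\cdot \beta(q_n(N))=0$. Therefore
\[
Q_n(u)=P^{\Delta_n}\beta(u)-\beta P^{\Delta_n}(u)=0,
\]
which completes the argument. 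The main obstacle will be justifying the precise form $P^{\Delta_n}(u)=u\cdot q_n(N)$ of the Wu formula, which requires the splitting principle together with some bookkeeping using the Cartan formula; once this is in place the rest is a formal consequence of the derivation property of the Milnor primitives.
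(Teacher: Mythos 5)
Your treatment of the closed embedding is sound and is essentially the paper's own argument: the paper likewise factors $f_*$ through multiplication by a Thom class and invokes $Q_n(Th(1))=0$ together with the derivation property of $Q_n$; you additionally supply a correct proof of the vanishing $Q_n(u)=0$. (In fact the existence of $q_n(N)$ with $P^{\Delta_n}(u)=u\cdot q_n(N)$ needs no splitting-principle bookkeeping: the Thom isomorphism already forces $P^{\Delta_n}(u)\in u\cdot H^*(Z;\bZ/p)$, and naturality identifies $q_n(N)$ with a polynomial in the mod $p$ Chern classes of $N$, which lift integrally, so $\beta(q_n(N))=0$.)

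The gap is in the projection step. It is not true that integration along the fiber of $\pi: X\times Y\to Y$ commutes with every stable cohomology operation ``by naturality''. On the K\"unneth decomposition $\pi_*$ is $\langle -,[X]\rangle\otimes \mathrm{id}$, so $(\pi_*Q_n - Q_n\pi_*)(a\otimes b)=\pm\langle Q_n a,[X]\rangle\, b$, and one must show $\langle Q_n a,[X]\rangle=0$ for every $a$ of degree $2\dim_{\bC}X-2p^n+1$. This is a Wu-class statement and is false for general closed manifolds and general stable operations: for $Sq^1$ and the projection $\bR P^2\to pt$ one has $\langle Sq^1x,[\bR P^2]\rangle=\langle x^2,[\bR P^2]\rangle=1$ while $Sq^1\pi_*(x)=0$. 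It holds in your situation only because $X$ carries a (stably) complex structure, and the proof again goes through a Thom class: embed $X$ in a sphere, describe $\pi_*$ by the Pontryagin--Thom collapse together with the Thom isomorphism for the stably complex normal bundle $\nu$, and use $Q_n(Th(\nu))=0$ exactly as in your embedding case. This is the single uniform construction the paper cites ($f_*=g^*f_*'$ with $f_*'(x)=i^*(Th(1)\cdot x)$), which handles both factors at once; once you repair the projection step this way, your argument closes.
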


\begin{proof}The above lemma is known (see the proof of Lemma 7.1 in \cite{YaH}). The transfer $f_*$ is 
expressed as $g^*f_*'$ such that
\[ f_*'(x)=i^*(Th(1)\cdot x), \quad x\in H^*(X;\bZ/p)\]
for some maps $g,f', i$ and the Thom class $Th(1)$.  Since 
$Q_n(Th(1))=0$ and $Q_i$ is a derivation, we get the lemma.
\end{proof}

By Voevodsky \cite{Vo1}, \cite{Vo2}, we have  the $Q_i$ operation also in the 
motivic cohomology $H^{*,*'}(X;\bZ/p)$ with
$deg(Q_i)=(2p^i-1,p-1).$

\begin{lemma} We see that $Im(cl)^+\subset N^1H^{2*}(X;A)$.
\end{lemma}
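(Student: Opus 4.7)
The plan is to reduce to the case of the cycle class of a single irreducible closed subvariety $Z \subset X$ of codimension $c \geq 1$, and then show directly that $cl(Z) \in N^c H^{2c}(X;A) \subset N^1 H^{2c}(X;A)$. Since the cycle map is additive, handling one irreducible subvariety at a time suffices, and the interpretation of $\Img(cl)^+$ as the image of cycles of codimension at least one is what forces $c \geq 1$ (the fundamental class in codimension zero is never in $N^1$, so the $+$ is essential).

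First I would recall that the cycle class $cl(Z) \in H^{2c}(X;A)$ is constructed via cohomology with supports: there is a fundamental class $[Z] \in H^{2c}_Z(X;A)$ (see Bloch--Ogus \cite{Bl-Og}) whose image under the forgetful map $H^{2c}_Z(X;A) \to H^{2c}(X;A)$ is $cl(Z)$. The key input is then the long exact sequence of the pair $(X, X \setminus Z)$:
\[
\cdots \to H^{2c}_Z(X;A) \to H^{2c}(X;A) \xrightarrow{j^*} H^{2c}(X \setminus Z;A) \to \cdots,
\]
where $j : X \setminus Z \hookrightarrow X$ is the open complement. Exactness gives $j^*(cl(Z)) = 0$ immediately, so by the very definition
\[
N^c H^{2c}(X;A) = \sum_{Z' \subset X,\ \codim Z' \geq c} \Ker\bigl(H^{2c}(X;A) \to H^{2c}(X \setminus Z';A)\bigr),
\]
we conclude $cl(Z) \in N^c H^{2c}(X;A)$. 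Since $c \geq 1$, this sits inside $N^1 H^{2c}(X;A)$, and summing over components gives the lemma.

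There is essentially no obstacle here — the argument is formal once one uses the construction of $cl(Z)$ as the image of a class supported on $Z$. The only point to check is the conventional interpretation of $\Img(cl)^+$; with the standard meaning (cycles of positive codimension) the proof is a one-line consequence of the localization sequence. In fact one obtains the stronger statement $\Img(cl)^+ \subset \tilde N^1 H^{2*}(X;A)$ by resolving singularities of $Z$ and pushing forward the fundamental class, but for the present lemma only the weaker containment in $N^1$ is claimed.
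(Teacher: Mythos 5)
Your argument is correct, but it takes a different route from the paper. The paper deduces the lemma from its Lemma 2.1 (the Deligne--Paranjape identification of the coniveau filtration with the $\tau$-multiplication filtration on motivic cohomology): since $CH^*(X)\otimes A\cong H^{2*,*}(X;A)$ and $H^{*,*'}(X;A)\subset N^{*-*'}H^*(X;A)$, the cycle map $H^{2*,*}\to H^{2*,2*}$ lands in $N^{*}H^{2*}\subset N^1H^{2*}$ for $*\ge 1$. You instead argue geometrically: $cl(Z)$ is the image of a class in $H^{2c}_Z(X;A)$, so it dies under restriction to $X\setminus Z$ by the localization sequence, hence lies in $N^cH^{2c}$ by the very definition of the coniveau filtration. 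Your proof is more elementary --- it needs no motivic cohomology and no comparison of spectral sequences, and it applies directly to the Betti (or \'etale) cohomology with supports --- whereas the paper's one-line deduction is uniform with its systematic use of the $\tau$-Bockstein description of $N^c$ throughout (Lemmas 2.1 and 7.1). Both give the sharper containment $cl(Z)\in N^{\codim Z}$. One caution on your closing aside: the claim $\Img(cl)^+\subset \tilde N^1H^{2*}(X;A)$ via resolution of singularities is plausible in characteristic zero and is consistent with the paper's Lemma 2.3 (Chern classes die in $DH^*$), but it is not needed here and you should not lean on it elsewhere, since the paper's whole point is that $N^1$ and $\tilde N^1$ differ on classes \emph{not} in the image of the cycle map.
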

\begin{proof}   From Lemma 2.1, we see
$H^{*.*'}(X;A)\subset  N^{*-*'}H^*(X;A)$.  We have $H^{2*,*}(X;A)\cong  CH^*(X)\otimes A.$
Since $2*>*$ for $*\ge 1$, we see $cl(y)\in N^{1}H^{2*}(X;A)$.
\end{proof}

Each element $y\in CH^*(X)\otimes A$ is represented by closed algebraic set 
supported $Y$, while $Y$ may be singular.
On the other hand,  by Totaro \cite{To}, we have the modified cycle map $\bar cl$ such that the usual cycle map is 
\[ cl:CH^*(X)\otimes A\stackrel{\bar cl}{\to} MU^{2*}(X)
\otimes_{MU^*}A\stackrel{\rho}{\to} H^{2*}(X;A)\]
for the complex cobordism theory $MU^*(X)$.
It is known \cite{Qu1} that elements in $MU^{2*}(X)$ can be 
represented by proper maps to X from   stable almost complex manifolds $Y$.
(The manifold $Y$ is not necessarily a 
 complex manifold.)

The following lemma is well known.
\begin{lemma}
If $x\in Im(\rho)$ for $\rho: MU^*(X)/p\to H^*(X;\bZ/p)$, then 
we have $Q_i(x)=0$ for all $i\ge 0$. 
\end{lemma}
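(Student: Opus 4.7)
The plan is to represent any element of $\Img(\rho)$ as a Gysin-image of the unit class of an auxiliary manifold, and then conclude using Lemma 3.1 together with the positivity of the degree of $Q_i$.

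First, I would invoke the geometric description of complex cobordism due to Quillen \cite{Qu1}, recalled in the paragraph just above the statement: every class $\tilde x \in MU^{2n}(X)$ can be represented by a proper map $f\colon Y\to X$ from a stably almost complex manifold $Y$ of real dimension $\dim_{\bR}(X)-2n$, with $\tilde x = f_*(1_Y)$ in $MU^{2n}(X)$. Passing to mod $p$ singular cohomology via $\rho$, we obtain
\[ x \;=\; \rho(\tilde x) \;=\; f_*(1_Y) \;\in\; H^{2n}(X;\bZ/p), \]
where $f_*$ now denotes the Gysin map on ordinary $\bZ/p$-cohomology.

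Next, I would apply Lemma 3.1 to compute
\[ Q_i(x) \;=\; Q_i\bigl(f_*(1_Y)\bigr) \;=\; f_*\bigl(Q_i(1_Y)\bigr). \]
Since $|Q_i|=2p^i-1\ge 1$ for every $i\ge 0$ and $1_Y\in H^0(Y;\bZ/p)$, the class $Q_i(1_Y)$ vanishes for pure degree reasons. Therefore $Q_i(x)=0$, which is the desired conclusion.

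The only subtlety worth checking, and the main (mild) obstacle, is that Lemma 3.1 was stated for a proper smooth map of algebraic varieties, whereas here $Y$ is only a stably almost complex manifold equipped with a proper map to $X$. However, the proof of Lemma 3.1 recalled in the excerpt uses only the derivation property of $Q_n$ together with the vanishing $Q_n(Th(1))=0$ for the mod $p$ Thom class of the relevant normal bundle; both facts remain valid for the Thom class of any stably almost complex bundle, so Lemma 3.1 extends to this broader setting without any modification and the proof goes through verbatim.
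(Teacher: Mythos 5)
Your argument is correct, but it is genuinely different from the one in the paper. The paper proves this lemma purely homotopy-theoretically: it factors $\rho$ through the connective Morava $K$-theory $k(i)^*(X)$ and invokes the Sullivan exact sequence $\cdots \to H^*(X;\bZ/p)\stackrel{\delta}{\to} k(i)^*(X)\stackrel{v_i}{\to} k(i)^*(X)\stackrel{\rho_2}{\to} H^*(X;\bZ/p)\to\cdots$ together with the identity $Q_i=\rho_2\delta$, so that $Q_i\rho_2=\rho_2\delta\rho_2=0$ by exactness. That argument works for an arbitrary space $X$ and needs no geometric input. Your proof instead uses Quillen's geometric description of $MU^*(X)$ (which the paper does recall, but only as motivation, not in this proof): every cobordism class is $f_*(1_Y)$ for a proper complex-oriented map from a stably almost complex manifold, so $Q_i(x)=f_*(Q_i(1_Y))=0$ by Lemma 3.1 and the vanishing of $Q_i$ on $H^0$. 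This is a clean and arguably more conceptual explanation of \emph{why} the lemma holds, and you correctly identify and dispose of the one real obstacle, namely that Lemma 3.1 must be extended from algebraic Gysin maps to topological ones for stably almost complex proper maps, which only requires $Q_n(Th(1))=0$ for Thom classes of (stably) complex bundles. Two caveats worth noting: your route requires $X$ to be a manifold (true in this paper, where $X$ is smooth projective, but the paper's proof does not need it), and you should state the representation result in all degrees of $MU^*(X)$, not just $MU^{2n}(X)$, since the lemma is later applied to odd-degree classes such as $Q_{i_1}\cdots Q_{i_{n-2}}\eta_n$; Quillen's description does cover odd degrees, so this is only a matter of wording.
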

\begin{proof}  
Recall  the 
connective  
Morava K-theory $k(i)^*(X)$ with $k(i)^*=\bZ/p[v_i]$, $|v_i|=-2p^i+2$, which has natural maps 
\[ \rho:MU^*(X)/p \stackrel{\rho_1}{\to} 
 k(i)^*(X)\stackrel{\rho_2}{\to} H^*(X:\bZ/p).\]

It is known that there is an exact sequence (Sullivan exact sequence) such that
\[  ...\stackrel{\rho_2}{\to}  H^*(X;\bZ/p)\stackrel{\delta}{\to}
k(i)^*(X) \stackrel{v_i}{\to}
 k(i)^*(X)\stackrel{\rho_2}{\to} H^*(X:\bZ/p)\stackrel{\delta}{\to}...\]
with $\rho_2\delta=Q_i$. Hence 
$Q_i\rho_2(x)=\rho_2\delta \rho_2=0.$
which 
implies $Q_i\rho(x)=0$.
\end{proof}

The following lemma is the $Q_i$-version of  one of results by Benoist and Ottem.
\begin{lemma}
Let $\alpha\in N^1H^s(X)$ for $s=3$ or $4$.
If $Q_i(\alpha)\not =0\in H^*(X;\bZ/p)$ for some $i\ge 1$,  then
\[ DH^s(X)/p\supset \bZ/p\{\alpha\},\quad 
DH^s(X;\bZ/p^t)/p\supset \bZ/p\{\alpha\}\ \ for\ t\ge 2.
\]
\end{lemma}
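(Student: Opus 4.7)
I would argue by contradiction, using Lemma 3.1 as the main tool. The strategy: assume $\alpha$ maps to zero in $DH^s(X)/p$, extract a presentation of $\alpha$ from this vanishing, apply $Q_i$ to its mod $p$ reduction, and show the result vanishes term-by-term, contradicting the hypothesis $Q_i(\alpha)\ne 0$.

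Concretely, if $\alpha$ represents $0$ in $DH^s(X)/p$ then
\[
\alpha \;=\; p\gamma \;+\; \sum_{j} f_{j,*}(y_j) \qquad\text{in } H^s(X;\bZ),
\]
with each $f_j\colon Y_j\to X$ a proper morphism from a smooth $Y_j$ of codimension $r_j\ge 1$ and $y_j\in H^{s-2r_j}(Y_j;\bZ)$. Reducing mod $p$ and invoking Lemma 3.1 (commutation of $Q_i$ with the transfer) gives
\[
Q_i(\bar\alpha) \;=\; \sum_j f_{j,*}\bigl(Q_i(\bar y_j)\bigr).
\]
The key observation is that $s\in\{3,4\}$ combined with $r_j\ge 1$ forces $|y_j|\le 2$. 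The technical heart of the proof is then the claim that $Q_i(\bar y)=0$ for every mod $p$ reduction $\bar y$ of an integral class of degree $\le 2$ on a smooth variety. Such a $\bar y$ is pulled back from $K(\bZ,|\bar y|)$, so by naturality it suffices to verify $Q_i=0$ on the tautological mod $p$ class: for degrees $0$ and $1$ this is trivial because the target $H^{|\bar y|+2p^i-1}(K(\bZ,|\bar y|);\bZ/p)$ vanishes, while for degree $2$ the space $K(\bZ,2)=\bC P^\infty$ has $H^*(\bC P^\infty;\bZ/p)=\bZ/p[c]$ with $c$ integral, and $Q_i(c)=-\beta P^{\Delta_i}(c)=-\beta(c^{p^i})\equiv 0\pmod p$ by the derivation property of $\beta$ together with $\beta c=0$. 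Substituting back yields $Q_i(\bar\alpha)=0$, the desired contradiction; the relation $p\alpha=0$ in $DH^s(X)/p$ is built into the definition, so $\bZ/p\{\alpha\}\subset DH^s(X)/p$ as claimed.

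The $\bZ/p^t$ variant for $t\ge 2$ goes through identically: any $z_j\in H^{s-2r_j}(Y_j;\bZ/p^t)$ of degree $\le 2$ reduces mod $p$ to a class lifting to $\bZ/p^2$, so $\beta$ vanishes on the reduction and the same universal computation applies. The one genuinely delicate point, where I would spend the most care, is the degree-$2$ case at $p=2$: the clean formula $P^{\Delta_i}(c)=c^{p^i}$ is not literally available, so instead I would use $Sq^1 c=0$, instability ($Sq^k c=0$ for $k>2$), and the recursion $Q_{i+1}=[Sq^{2^{i+1}},Q_i]$, starting from $Q_1 c=Sq^3 c+Sq^2 Sq^1 c=0$ and inducting. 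All remaining steps are elementary consequences of Lemma 3.1 and the definition of $\tilde N^1$.
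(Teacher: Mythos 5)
Your proposal is correct and follows essentially the same route as the paper: assume $\alpha$ lies in $(p,\tilde N^1H^s(X))$, commute $Q_i$ past the Gysin maps via Lemma 3.1, and kill $Q_i$ on the mod~$p$ reduction of an integral class of degree $\le 2$ using $Q_i=-\beta P^{\Delta_i}$ and $\beta(\bar y)=0$ (the paper states exactly this computation, $-\beta((\alpha')^{p^i})=-p^i(\beta\alpha')(\alpha')^{p^i-1}=0$, and handles $s=3$ and the $\bZ/p^t$ case the same way you do). Your version is marginally more careful in writing out the $p\gamma$ term, the sum over several $Y_j$, and the $p=2$ degree-two case, but the underlying argument is identical.
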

 \begin{proof}
Suppose $\alpha\in \tilde N^1H^s(X)$ for $s=3$ or $4$, i.e.
there is  a smooth $Y$ with $f:Y\to X$ such that
the transfer $f_*(\alpha')=\alpha$ for $\alpha'\in H^*(Y)$.

Then for $s=4, $
\[ Q_i(\alpha')=(P^{\Delta_i}\beta-\beta P^{\Delta_i})(\alpha')=(-\beta P^{\Delta_i})(\alpha') 
 = -\beta (\alpha')^{p^i}\]
\[ =-p^i(\beta \alpha')(\alpha')^{p^i-1}=0 \quad (by\
the \ Cartan\ formula)\]
since $\beta(\alpha')=0$ and $P^{\Delta_i}(y)=y^{p^i}$ for $deg(y)=2$.
(For $s=3$, we get also $Q_i(\alpha')=0$ since  $ P^{\Delta_i}(x)=0$ for $deg(x)=1$.)
This contradicts to the commutativity of $Q_i$ and $f_*$.
 
The case $A=\bZ/p^t$, $t\ge 2$ is proved similarly,  since 
for $\alpha'\in H^*(X;A)$ we see $\beta \alpha'=0\in H^*(X;\bZ/p)$. Thus we have this lemma.  \end{proof}

We  will extend the above Lemma  3.4 to $s>4$, by using 
$MU$-theory of Eilenberg-MacLane spaces.
Recall that $K=K(\bZ,n)$ is the Eilenberg-MacLane space
such that the homotopy group $[X,K]\cong H^n(X;\bZ)$, i.e., each element 
$x\in H^n(X;\bZ)$ is represented by a homotopy map 
$x: X\to K$. Let $\eta_n\in H^n(K;\bZ)$ 
corresponding the identity map.
%(For $K'=K(\bZ/p,n)$ define $\eta_n'\in H^n(K';\bZ/p)$ by the identity  element of $K'$.)
We know the image $\rho(MU^*(K))\subset H^*(K;\bZ)/p$
by Tamanoi.

\begin{lemma} (\cite{Ta}, \cite{Ra-Wi-Ya})
Let $K=K(\bZ,n)$ % and $K'=K(\bZ/p.n)$.
We have the isomorphism 
\[  \rho :  MU^*(K)\otimes _{MU^*}\bZ/{p}
\cong\bZ/{p}[Q_{i_1}...Q_{i_{n-2}}\eta_n| 0<i_1<...<i_{n-2}] \]
%\[  \rho :   MU^*(K')\otimes _{MU^*}\bZ/{p}
%\cong\bZ/{p}[Q_{i_1}...Q_{i_{n-1}}Q_0\eta_{n}'
%|0<i_1<...<i_{n-1}] , \]
where the notation $\bZ/p[a,...]$ exactly means $\bZ/p[a,...]/(a^2|\ |a|=odd)$. 
\end{lemma}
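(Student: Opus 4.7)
The plan is to reduce from $MU$-cohomology to $BP$-cohomology and invoke the Ravenel--Wilson calculation of $BP^*(K(\bZ,n))$, together with the classical Serre--Cartan computation of $H^*(K(\bZ,n);\bZ/p)$ to pin down the image under $\rho$.

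First, by Quillen's splitting theorem, after $p$-localization $MU_{(p)}$ decomposes as a wedge of suspensions of $BP$, so that for any space $X$ one has $MU^*(X)_{(p)} \cong BP^*(X) \otimes_{BP^*} MU^*_{(p)}$ as $MU^*_{(p)}$-modules. Upon further tensoring with $\bZ/p$ over $MU^*$, the polynomial generators of $MU^*_{(p)}$ over $BP^*$ collapse, and we obtain
\[ MU^*(K) \otimes_{MU^*} \bZ/p \;\cong\; BP^*(K) \otimes_{BP^*} \bZ/p. \]
Thus it suffices to describe the right-hand side and identify its image in $H^*(K;\bZ/p)$ under the realization map.

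Second, I would invoke the Ravenel--Wilson--Yagita description of $BP^*(K(\bZ,n))$, in which this $BP^*$-algebra is generated by iterated lifts of Milnor operations applied to the fundamental class $\eta_n$, with all relations involving coefficients in the maximal ideal $I=(p,v_1,v_2,\ldots)\subset BP^*$. Modding out by $I$ leaves exactly the algebra generated by the classes $Q_{i_1}\cdots Q_{i_{n-2}}\eta_n$ for $0<i_1<\cdots<i_{n-2}$. A direct degree count gives
\[ |Q_{i_1}\cdots Q_{i_{n-2}}\eta_n| \;=\; n+\sum_{j=1}^{n-2}(2p^{i_j}-1) \;=\; 2+2\sum_{j} p^{i_j}, \]
which is always even, so the exterior-part convention $(a^2\mid |a|\text{ odd})$ in the notation is vacuous for the generators occurring here; one obtains a genuine polynomial algebra.

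Third, to verify that $\rho$ is injective on this subalgebra and that the stated formula holds, I would compare with the classical Serre--Cartan computation of $H^*(K(\bZ,n);\bZ/p)$. The admissible Milnor iterates $Q_{i_1}\cdots Q_{i_{n-2}}\eta_n$ form part of a system of polynomial generators there, so multiplicative independence in the target is automatic, and the map $\rho$ is forced to be injective on the image of the $BP$-level Milnor lifts (which by construction reduce mod $I$ to the classical $Q_i$'s, using that $Q_i$ commutes with $\rho$ as in Lemma 3.1). The main obstacle is the BP-level calculation of $BP^*(K(\bZ,n))$ itself, which is delicate; since it is classical and the lemma is attributed to \cite{Ta} and \cite{Ra-Wi-Ya}, I would simply invoke those references rather than rederive it.
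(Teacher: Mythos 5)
The paper gives no proof of this lemma at all: it is quoted directly from \cite{Ta} and \cite{Ra-Wi-Ya}, and your proposal likewise (and appropriately) defers the essential content --- the computation of $BP^*(K(\bZ,n))$ modulo the ideal $I=(p,v_1,v_2,\dots)$ and the identification of the image of the Thom map --- to those same references. Your surrounding reductions are correct and consistent with how those sources proceed: the Quillen splitting identifying $MU^*(K)\otimes_{MU^*}\bZ/p$ with $BP^*(K)\otimes_{BP^*}\bZ/p$, the degree count showing each generator $Q_{i_1}\cdots Q_{i_{n-2}}\eta_n$ lies in even degree $2+2\sum_j p^{i_j}$ (so the exterior-square convention is indeed vacuous here), and the use of the Serre--Cartan basis of $H^*(K(\bZ,n);\bZ/p)$ to see multiplicative independence of the images.
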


The following lemma is an extension of Lemma 3.4 for  $s> 4$.  (Here we use  $MU^*$-theory, and we assume $H^*(-;A)$ is the $Betti$ cohomology.) 
\begin{lemma}
Suppose that $H^*(X;A)$ is the Betti  cohomology.
Let $\alpha\in N^cH^{n+2c}(X)$, $n\ge 2$, $c\ge 1$. 
Suppose that  there is a sequence $0<i_1<...<i_{n-1}$ with 
\[ Q_{i_1}...Q_{i_{n-1}}\alpha \not =0 \quad   in\ H^*(X;\bZ/p).\]
Then $D^cH^{*}(X)/p=N^cH^{*}(X)/(p,\tilde N^cH^{*}(X)) \supset \bZ/p\{\alpha\}$.
\end{lemma}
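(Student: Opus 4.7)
The plan is to extend the argument of Lemma 3.4 from degrees $s=3,4$ to arbitrary degree by replacing the direct verification ``$Q_i(\alpha')=0$'' with Tamanoi's Lemma 3.5, which describes the image of $\rho$ on $MU^*(K(\bZ,m))/p$ for general $m$. I would argue by contradiction: suppose $\alpha\in p\,N^cH^{n+2c}(X) + \tilde N^cH^{n+2c}(X)$, so that modulo $p$ one has $\bar\alpha = f_*(\bar\alpha')$ for some smooth proper $f\colon Y\to X$ with $r := \dim X - \dim Y \geq c$, and $\alpha'\in H^m(Y;\bZ)$ where $m = n+2c-2r\leq n$. Choose a classifying map $\alpha'\colon Y \to K(\bZ,m)$, so that the mod-$p$ reduction satisfies $\bar\alpha' = (\alpha')^*\bar\eta_m$.

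Next, pick the sub-sequence $0<i_1<\cdots<i_{m-2}$ of the given indices (assuming $m\geq 2$). Tamanoi's Lemma 3.5 places $Q_{i_1}\cdots Q_{i_{m-2}}\bar\eta_m$ in the image of $\rho\colon MU^*(K(\bZ,m))/p\to H^*(K(\bZ,m);\bZ/p)$. Naturality of $\rho$ along $\alpha'$ promotes this to $Q_{i_1}\cdots Q_{i_{m-2}}\bar\alpha' \in \Img(\rho_Y)$, and Lemma 3.3 applied with the extra operation $Q_{i_{m-1}}$ then gives $Q_{i_{m-1}}Q_{i_1}\cdots Q_{i_{m-2}}\bar\alpha' = 0$. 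The graded commutativity $Q_iQ_j = -Q_jQ_i$ of Milnor operations rewrites this as $Q_{i_1}\cdots Q_{i_{m-1}}\bar\alpha' = 0$. Using the same commutativity once more, the remaining operations $Q_{i_m},\ldots,Q_{i_{n-1}}$ slip to the right to give $Q_{i_1}\cdots Q_{i_{n-1}}\bar\alpha' = \pm Q_{i_m}\cdots Q_{i_{n-1}}\bigl(Q_{i_1}\cdots Q_{i_{m-1}}\bar\alpha'\bigr) = 0$ in $H^*(Y;\bZ/p)$. Iterating Lemma 3.1, $Q_{i_1}\cdots Q_{i_{n-1}}\bar\alpha = f_*(Q_{i_1}\cdots Q_{i_{n-1}}\bar\alpha') = 0$, contradicting the hypothesis.

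The main obstacle I anticipate is the mismatch between the $m-2$ operations Tamanoi affords and the $n-1$ the hypothesis demands, which is genuine when $r>c$ (so $m<n$). The plan closes this by splitting $n-1 = (m-2) + 1 + (n-m)$: Tamanoi provides the first $m-2$, Lemma 3.3 supplies the annihilating one, and the last $n-m$ act trivially on an already-vanishing class. The edge cases $m\leq 1$ are handled separately: for $m=0$ the class $\bar\alpha'$ lies in $\Img(\rho_Y)$ via $MU^0(Y)/p\twoheadrightarrow H^0(Y;\bZ/p)$, and for $m=1$ via $K(\bZ,1)\simeq S^1$ where the Atiyah--Hirzebruch spectral sequence is trivial; in both cases Lemma 3.3 annihilates $\bar\alpha'$ after a single Milnor operation and the rest of the argument is unchanged.
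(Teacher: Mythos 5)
Your argument is correct and follows essentially the same route as the paper's proof: Tamanoi's Lemma 3.5 places the iterated Milnor operations on $\bar\alpha'=(\alpha')^*\bar\eta$ in the image of $\rho$ from $MU^*$, Lemma 3.3 annihilates that class under one further $Q_i$, and the commutativity of $Q_i$ with $f_*$ (Lemma 3.1) yields the contradiction. The only divergence is that you explicitly treat the case $r>c$ (so $\deg\alpha'=m<n$, forcing the subsequence/anticommutativity bookkeeping) and the degenerate cases $m\le 1$, whereas the paper's proof writes only the case $\dim Y=\dim X-c$ with $\alpha'\in H^n(Y)$; this is a refinement of the same method rather than a different one.
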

\begin{proof}
Suppose $\alpha\in \tilde N^cH^{n+2c}(X)$,  i.e.
there is  a smooth $Y$ of $dim(Y)=dim(X)-c$ with $f:Y\to X$ 
such that 
the transfer $f_*(\alpha')=\alpha$ for $\alpha'\in H^n(Y)$.

%Let $r: H^*(X) \to 　H^*(-;\bZ/p)$ be the $mod(p)$ reduction map.

%Let　 $r: H^*(X) \to H^*(X;\bZ/p)$ 　
%be a reduction map.
Let  $ r: H^*(X)\to H^*(X;\bZ/p)$ be the reduction map.
We consider the commutative diagram for $I=(i_1,...,i_{n-2})$ and
$j=i_{n-1}$ 
\[ \begin{CD}
\alpha'\in H^{n}(Y)    @>{f_*}>> 
\alpha\in H^{n+2c}(X)  
\\
%@VVV  @VVV  \\
%H^*(Y;\bZ/p)   @>{f_*}>>  H^*(X;\bZ/p) \\
@V{Q_Ir}VV    @V{Q_Ir}VV \\
Q_I(\alpha')\in Im(\rho| MU^*(Y)) @>{f_*}>>   H^*(X;\bZ/p)  \\
@V{Q_j}VV    @V{Q_j}VV  \\
 0=  Q_{i_{n-1}}Q_I(\alpha')\in H^*(Y;\bZ/p)    @>{f_*}>>  
Q_{i_{n-1}}Q_I(\alpha)\in H^*(X;\bZ/p). 
\end{CD}
 \]

Identify  the map $\alpha': Y\to K$ with $\alpha'=(\alpha')^*\eta_n.$
We still see from Lemma 3.5,
   \[Q_I(\alpha')= Q_{i_1}...Q_{i_{n-2}}((\alpha')^*\eta_n)\in 
Im(\rho : MU^*(Y)\to H^*(Y;\bZ/p)).\]
From  Lemma 3.3,  we see
\[Q_{i_{n-1}}Q_I(\alpha')=Q_{i_{n-1}}Q_{i_1}...Q_{i_{n-2}}(\alpha')=0 \in H^*(Y;\bZ/p).\]  
Therefore $Q_{i_{n-1}}Q_I(\alpha)$
must be zero by the commutativity of $f_*$ and $Q_i$.
\end{proof}

%   {\bf Remark.}  For $\alpha\in N^cH^{n+2c}(X;\bZ/p)$, one  can
%prove an $A=\bZ/p$ version  of the above lemma using  the second isomorphism in Lemma 4.2. But  we can see
%$Q_{i_1}...Q_{i_n}Q_0\alpha=0$ always (even when $Q_{i_1}...Q_{i_{n-1}}\alpha\not=0$),  hence $\bZ/p$ version would be vacuous.

\section{abelian $p$-groups}

At first, we assume $H^*(X)$ is the $ Betti$  cohomology so that 
the main lemma (Lemma 3.6)  holds.
However we will see the most  $irrational$ results
hold 
for each $k\subset \bC$.

From the main lemma, we have  

\begin{lemma}
Let $\alpha \in N^1H^{n+2}(X)$ and $Q_I(\alpha)\not =0\in H^*(X;\bZ/p)$
for some $I=(0<i_1<...<i_{n-1})$.  Let $X'$ be a smooth projective variety.
Then 
\[ DH^*(X\times X')/p \supset \bZ/p\{\alpha\otimes 1\}.\]Hence $X\times X'$ is not stable rational,
\end{lemma}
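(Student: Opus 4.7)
The plan is to reduce Lemma 4.1 to the previously proved Lemma 3.6 by checking that the class $\alpha\otimes 1$ on $X\times X'$ satisfies both hypotheses of that lemma.

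First, I would verify that $\alpha\otimes 1\in N^1H^{n+2}(X\times X')$. By definition of $N^1$, there is a closed $Z\subset X$ of codimension $\ge 1$ with $\alpha|_{X-Z}=0$. Then $Z\times X'\subset X\times X'$ has codimension $\ge 1$, and the complement is $(X-Z)\times X'$, on which $\alpha\otimes 1$ restricts to $(\alpha|_{X-Z})\otimes 1=0$. Hence $\alpha\otimes 1$ lies in $N^1H^{n+2}(X\times X')$.

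Next I would verify the $Q_I$-nonvanishing hypothesis. Since each Milnor operation $Q_i$ is a derivation and $Q_i(1)=0$, a simple induction on the length of $I=(i_1,\dots,i_{n-1})$ using the Cartan formula yields
\[
Q_I(\alpha\otimes 1)=Q_I(\alpha)\otimes 1
\quad\text{in }H^{*}(X\times X';\bZ/p).
\]
By the Künneth formula (applied with field coefficients $\bZ/p$), the class $Q_I(\alpha)\otimes 1$ is nonzero in $H^*(X\times X';\bZ/p)$ because $Q_I(\alpha)\neq 0$ in $H^*(X;\bZ/p)$ by hypothesis and $1\neq 0$ in $H^0(X';\bZ/p)$.

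Now Lemma 3.6, applied with $c=1$ to the smooth projective variety $X\times X'$ and the class $\alpha\otimes 1\in N^1H^{n+2}(X\times X')$, gives
\[
DH^{n+2}(X\times X')/p\supset \bZ/p\{\alpha\otimes 1\},
\]
which proves the first assertion. The second assertion follows since $DH^*(-;A)/p$ is a stable birational invariant (Proposition 2.4 of \cite{Be-Ot}) that vanishes on projective spaces, so nonvanishing in positive degree on $X\times X'$ forbids stable rationality.

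The main conceptual obstacle is in step two: one must know that the Milnor operations $Q_i$ respect the Künneth decomposition. This is built into the derivation property and the fact that $Q_i$ annihilates the unit, which is standard in the Betti setting in which Lemma 3.6 is formulated; so in fact no genuinely new input beyond Lemma 3.6 is required.
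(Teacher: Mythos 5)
Your proof is correct and follows essentially the same route as the paper's: compute $Q_I(\alpha\otimes 1)=Q_I(\alpha)\otimes 1\neq 0$ via the K\"unneth formula for $\bZ/p$-coefficients and then invoke Lemma 3.6 with $c=1$. You additionally spell out the verification that $\alpha\otimes 1\in N^1H^{n+2}(X\times X')$, which the paper leaves implicit; this is a welcome but not substantively different addition.
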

\begin{proof} 
The (Betti) cohomology  $H^*(X;\bZ/p)$ satisfies the Kunneth formula.  Hence we have 
\[ Q_I(\alpha \otimes 1)=Q_I(\alpha)\otimes 1\not =0\ \ in\ \  \sum_{s=0} H^{*-s}(X;\bZ/p)\otimes H^s(X';\bZ/p).\]
From the main lemma, we have the lemma.
\end{proof}

Let $G_n=\bZ/p^n$.  Recall the $mod(p)$ cohomology
(for $p$ odd)
\[H^{*}(BG_n;\bZ/p) \cong \bZ/p[y_1,....y_n]\otimes
\Lambda(x_1,...,x_n)\]
where $|x_i|=1$ and $Q_0(x_i)=y_i$, (for $p=2$, $x_i^2=y_i$).

\begin{cor} For $n\ge 3$, let $G_n=(\bZ/p)^n$.
Then $X_{G_n}$ is not stable rational.
Moreover $X_{G_n}$ and $X_{G_{n'}}$ are not stable birational equivalent  for $n\not=n'$.
\end{cor}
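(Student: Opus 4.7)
For each $i$ with $2\leq i\leq n$, the goal is to show that $\alpha_i=Q_0(x_1\cdots x_i)\in H^{i+1}(X_{G_n})$ represents a nonzero class in $DH^{i+1}(X_{G_n})/p$. By invoking Lemma~3.6 (or Lemma~3.4 in the borderline case $i=2$), this reduces to checking a single cohomology operation computation. Non-vanishing of $DH^{*}$ then contradicts stable rationality, and the fact that the supply of such independent classes grows with $n$ forces the stable birational distinction.

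First, one verifies that $\alpha_i\in N^1 H^{i+1}(X_{G_n})$. Since $Q_0=\beta$ is the integral Bockstein, each $\alpha_i$ is the mod-$p$ reduction of a $p$-torsion integral class in $H^{i+1}(X_{G_n};\bZ)$; Lemma~2.4 (Colliot-Th\'el\`ene--Voisin) then places every torsion class in $N^1 H^{*}$.

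The central computation is that for the strictly increasing sequence $0<1<2<\cdots<i-2$ one has
\[ Q_1 Q_2 \cdots Q_{i-2}(\alpha_i) \neq 0 \quad \text{in } H^{*}(X_{G_n};\bZ/p). \]
By anti-commutativity of the Milnor primitives this equals, up to sign, $Q_0 Q_1\cdots Q_{i-2}(x_1\cdots x_i)$. Each $Q_k$ is a derivation with $Q_k(x_j)$ a polynomial in $y_j$ (where $y_j=x_j^2$ when $p=2$) and $Q_k(y_j)=0$, so distributing the $i-1$ operations across the $i$ factors yields
\[ Q_0 Q_1 \cdots Q_{i-2}(x_1\cdots x_i) = \sum_{j=1}^{i} (-1)^{j-1} x_j \cdot D_j, \]
where $D_j=\det(y_\ell^{p^m})_{\ell,m}$ is an $(i-1)\times(i-1)$ Dickson-type determinant over $\ell\in\{1,\dots,i\}\setminus\{j\}$ and $m=0,\dots,i-2$. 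Each $D_j$ is a classical non-vanishing Dickson invariant in $\bZ/p[y_1,\dots,y_n]$, and the $x_j$-prefactors are linearly independent exterior monomials, so the entire sum is nonzero in $H^{*}(BG_n;\bZ/p)$ and hence in $H^{*}(X_{G_n};\bZ/p)$.

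Applying Lemma~3.6 with $c=1$ and internal parameter $i-1\geq 2$ then yields $0\neq [\alpha_i]\in DH^{i+1}(X_{G_n})/p$ for $3\leq i\leq n$, while Lemma~3.4 covers $i=2$. Assembling these gives $DH^{*}(X_{G_n})/p\supset \bZ/p\{\alpha_2,\dots,\alpha_n\}\neq 0$, so $X_{G_n}$ is not stable rational. For the stable birational distinction of $X_{G_n}$ and $X_{G_{n'}}$ with $n'<n$, the class $\alpha_n\in DH^{n+1}(X_{G_n})/p$ is produced using $n$ distinct exterior generators, whereas in $DH^{n+1}(X_{G_{n'}})/p$ no analogous class can survive since only $n'$ exterior generators are available in $H^{*}(BG_{n'};\bZ/p)$. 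The main obstacle is making this last upper-bound step entirely rigorous: one must rerun the derivation expansion inside the smaller algebra on $x_1,\dots,x_{n'},y_1,\dots,y_{n'}$ and verify that every candidate term collapses (by a repeated exterior factor when $p$ is odd, or by a degree count when $p=2$), together with a companion argument that no class outside the image of $H^{*}(BG_{n'})\to H^{*}(X_{G_{n'}})$ in that degree contributes to the coniveau piece.
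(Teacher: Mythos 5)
Your argument for the first assertion (non--stable-rationality of $X_{G_n}$) is correct and is essentially the paper's own route: you place $\alpha_i=Q_0(x_1\cdots x_i)$ in $N^1H^{i+1}(X_{G_n})$ because it is a $p$-torsion integral class (Lemma 2.4), you verify $Q_1Q_2\cdots Q_{i-2}Q_0(x_1\cdots x_i)\not=0$ by the Moore/Dickson determinant expansion, and you feed this into Lemmas 3.4 and 3.6. This is exactly the computation underlying Lemma 4.3 and Theorem 4.4; the paper's own proof of the corollary is even more economical, using only $\alpha=Q_0(x_1x_2x_3)$ in degree $4$ together with the K\"unneth argument of Lemma 4.1, so that Lemma 3.4 suffices and the $MU^*(K(\bZ,n))$ input of Lemmas 3.5--3.6 is not needed for non-stable-rationality alone.

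The gap is in the second assertion, and it is the one you yourself flag. To distinguish $X_{G_n}$ from $X_{G_{n'}}$ with $n'<n$ via the stable birational invariant $DH^*$, you need, in addition to $\alpha_n\not=0$ in $DH^{n+1}(X_{G_n})/p$, an upper bound showing $DH^{n+1}(X_{G_{n'}})/p=0$. Your proposed repair --- rerunning the derivation expansion in the smaller algebra and checking that every candidate is killed by the operations --- cannot close this on its own: non-vanishing of some $Q_I(\alpha)$ is only a \emph{sufficient} criterion for $\alpha$ to survive in $DH^*$, so proving that all such operations vanish on classes of $H^{n+1}(X_{G_{n'}})$ says nothing about whether those classes are in $\tilde N^1$. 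The paper closes this by a different mechanism: by Frobenius reciprocity (Lemma 2.2) the ideal generated by Chern classes lies in $\tilde N^1$, hence dies in $DH^*$ (Lemma 2.3); since $H^*(BG_{n'})/p$ is the free $\bZ/p[y_1,\dots,y_{n'}]$-module on $1$ and the classes $Q_0(x_{i_1}\cdots x_{i_s})$, and the $y_j$ (and the hyperplane class of $\bP^{\infty}$) are Chern classes, $N^1H^{n+1}$ modulo the Chern ideal is spanned by $Q_0(x_{i_1}\cdots x_{i_n})$ with $n$ distinct indices in $\{1,\dots,n'\}$ --- of which there are none when $n'<n$. This is the content of Theorem 4.4 and of the remark after Lemma 4.3 that $\alpha_n=0$ in $H^*(X_{G_{n-1}})$. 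With that upper bound supplied, your proof is complete.
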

\begin{proof}
Take $G=G_3$ and $\alpha=Q_0(x_1x_2x_3)\in H^4(X_G)$.
The last statement follows from
$1\otimes...\otimes 1 \otimes \stackrel{s}{\alpha} \otimes 1\otimes ...\otimes 1
\not =0\in DH^*(X_{G_n}) .$
\end{proof}
We can take also another $\alpha$ for the proof of the last statement in the above corollary.
\begin{lemma}
Take $G=G_n=(\bZ/p)^n$ and $\alpha_i=Q_0(x_1x_2...x_i)\in H^{n+1}(X_G)$.
Then we have
\[ DH^*(X_{G_n})\supset \bZ/p\{\alpha_2,\alpha_3,...,\alpha_n\} .\]
\end{lemma}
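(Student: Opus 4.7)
The plan is to treat each $\alpha_i$ separately with the main lemmas: Lemma 3.4 for $i=2$ and Lemma 3.6 for $i\ge 3$. Since $|\alpha_i|=i+1$ takes pairwise distinct values as $i$ ranges over $\{2,\dots,n\}$, $\bZ/p$-linear independence in the graded module $DH^*(X_{G_n})/p$ is automatic once each $\alpha_i$ is shown to be individually nonzero.

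\emph{Coniveau condition.} Each $\alpha_i=Q_0(x_1\cdots x_i)=\beta(x_1\cdots x_i)$ is the mod-$p$ reduction of the integral Bockstein of $x_1\cdots x_i$, hence of an integral $p$-torsion class $\tilde\alpha_i\in H^{i+1}(X_{G_n})$. By Lemma 2.4 (Colliot-Th\'el\`ene--Voisin), every torsion integral class lies in $N^1$, so $\alpha_i\in N^1H^{i+1}(X_{G_n};\bZ/p)$.

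\emph{Non-vanishing of iterated Milnor operations.} Using graded anti-commutativity of the $Q_j$'s,
\[
Q_1Q_2\cdots Q_{i-2}\,\alpha_i \;=\; \pm\,Q_0Q_1Q_2\cdots Q_{i-2}(x_1x_2\cdots x_i).
\]
Each $Q_j$ is a super-derivation with $Q_j(x_k)=y_k^{p^j}$ and $Q_j(y_k)=0$, so applying $i-1$ operators to a product of $i$ generators yields a signed sum indexed by a choice of untouched factor $x_l$ together with a bijection from the remaining $i-1$ factors to $\{Q_0,\dots,Q_{i-2}\}$:
\[
\pm\sum_{l=1}^{i} x_l \cdot \det\!\bigl(y_k^{p^j}\bigr)_{k\ne l,\,0\le j\le i-2}.
\]
Each inner Vandermonde-type determinant is nonzero in $\bZ/p[y_1,\dots,y_n]$, and terms for distinct $l$ carry distinct $x_l$-factors, so the whole expression is nonzero in $H^*(BG_n;\bZ/p)$. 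Choosing the approximation degree $N$ larger than all degrees in play, the non-vanishing transfers to $H^*(X_{G_n};\bZ/p)$ via the defining isomorphism $g^*\colon H^*(BG_n\times\bP^\infty;\bZ/p)\cong H^*(X_{G_n};\bZ/p)$ in degrees below $N$. Applying Lemma 3.4 to $\alpha_2$ (with the direct witness $Q_1(\alpha_2)\ne 0$) and Lemma 3.6 to each $\alpha_i$ with $i\ge 3$ (using $c=1$ and the lemma's ``$n$'' equal to $i-1\ge 2$) gives $\alpha_i\ne 0$ in $DH^{i+1}(X_{G_n})/p$.

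The main obstacle is the combinatorial bookkeeping in the Milnor-operation calculation: tracking signs in the super-derivation Leibniz rule for odd $p$, and in the $p=2$ case using $Q_j(x_k^2)=0$ to force the same one-per-variable distribution, in order to recognise the output as a nondegenerate Vandermonde-type determinant. Everything else is a direct application of Lemmas 2.4, 3.4, and 3.6 together with the defining property of the Ekedahl approximation $X_{G_n}$.
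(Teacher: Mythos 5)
Your proposal is correct and follows essentially the same route as the paper: torsion classes lie in $N^1$ by the Colliot-Th\'el\`ene--Voisin lemma (Lemma 2.4), and then the main lemmas (Lemma 3.4 for degree $3$, Lemma 3.6 for higher degree) apply because the iterated Milnor operations $Q_1\cdots Q_{i-2}Q_0(x_1\cdots x_i)$ are nonzero in $H^*(BG_n;\bZ/p)\cong H^*(X_{G_n};\bZ/p)$ in degrees below $N$. The paper verifies the non-vanishing by exhibiting a single nonzero leading monomial $y_1^{p^{i_1}}\cdots y_{i-2}^{p^{i_{i-2}}}y_{i-1}x_i+\cdots$, whereas you organise the full expansion as a Moore/Vandermonde-type determinant; these are the same computation.
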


Since $\alpha_n=0$ in $H^*(X_{G_{n-1}})$ we also see 
that $X_{n}$ and $X_{n-1}$ are not stable birational equivalence.

The more detailed expression of $DH^*(X)/p$ seems
somewhat complicated.  
\begin{thm}   
Let $G=(\bZ/p)^n$.  Then we have (for fixed large $N$)
\[ DH^{s+1}(X)/p
\cong
 \bZ/p\{Q_0(x_{i_1}...x_{i_s})|1\le i_1<...,<i_s\le n\}. \]
\end{thm}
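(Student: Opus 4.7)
The plan is to combine three ingredients: the integer Bockstein decomposition of $H^{s+1}(X;\bZ)$, the Frobenius reciprocity law (Lemma~2.2) to eliminate monomials containing a Chern class factor, and the main lemma (Lemma~3.6) to establish linear independence of what remains. The inclusion $\supset$ extends Lemma~4.3 to all subsets $I\subset\{1,\ldots,n\}$ of size $s$: each $\alpha_I=Q_0(x_{i_1}\cdots x_{i_s})$ is the mod-$p$ reduction of the $p$-torsion class $\beta_{\mathrm{int}}(x_{i_1}\cdots x_{i_s})\in H^{s+1}(X;\bZ)$, which lies in $N^1 H^{s+1}(X)$ by Lemma~2.4.

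To identify $H^{s+1}(X;\bZ)$ modulo $\tilde N^1$ and $p$, I would use the surjection $\beta_{\mathrm{int}}:H^s(X;\bZ/p)\twoheadrightarrow H^{s+1}(X;\bZ)$ onto the torsion part together with the K\"unneth splitting off the $\bP^{\infty}$-factor. Since $H^*(X;\bZ/p)=\bZ/p[y_1,\ldots,y_n]\otimes\Lambda(x_1,\ldots,x_n)$ in the relevant range and $\beta_{\mathrm{int}}$ is a derivation with $\beta_{\mathrm{int}}(y_i)=0$, $\beta_{\mathrm{int}}(x_i)=y_i$, every class in $H^{s+1}(X;\bZ)/p$ is a $\bZ/p$-combination of $h^l\,y^{I'}\,Q_0(x^J)$ with $2l+2|I'|+|J|=s+1$, together with the free class $h^{(s+1)/2}$ when $s+1$ is even. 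Each $y_i=c_1(L_{x_i})$ and the hyperplane class $h=c_1(\mathcal{O}(1))$ lie in $\tilde N^1 H^2(X)$, so by Lemma~2.2 every monomial with $l\ge 1$ or $|I'|\ge 1$ (and the free class $h^{(s+1)/2}$) lies in $\tilde N^1 H^{s+1}(X)$. Hence the quotient $DH^{s+1}(X)/p$ is spanned by $\{Q_0(x_I):|I|=s\}$.

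The remaining step is to show that any nonzero $\bZ/p$-combination $\alpha=\sum_I c_I Q_0(x_I)$ is not in $\tilde N^1 H^{s+1}(X)$ modulo $p$. Assuming $s\ge 3$, I would apply the Milnor composite $Q_1 Q_2\cdots Q_{s-2}$ to $\alpha$. Using the derivation property, the anticommutativity $Q_a Q_b=-Q_b Q_a$, and the formula $Q_i(x_j)=y_j^{p^i}$ for $i\ge 1$, this equals $\pm\sum_I c_I\,Q_0 Q_1\cdots Q_{s-2}(x_I)$, which unfolds to a sum over injections from the $s-1$ operators $\{Q_0,Q_1,\ldots,Q_{s-2}\}$ into the $s$ factors of $x_I$. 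For each $I=\{i_1<\cdots<i_s\}$, the distinguished monomial $x_{i_1}\,y_{i_2}\,y_{i_3}^{p}\cdots y_{i_s}^{p^{s-2}}$ arises from a unique such distribution (leaving $x_{i_1}$ untouched and applying $Q_{k-2}$ to $x_{i_k}$) with coefficient $\pm 1$, and different $I$'s contribute monomials with disjoint supports in the $y$-variables. Thus $Q_1\cdots Q_{s-2}(\alpha)\ne 0$ in $H^*(X;\bZ/p)$, and Lemma~3.6 forces $\alpha\not\in\tilde N^1 H^{s+1}(X)/p$; the edge case $s=2$ is handled analogously using Lemma~3.4.

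The main obstacle is the combinatorial bookkeeping in the Milnor-operation computation, especially for $p$ odd, where the anticommutativity and derivation signs must be tracked carefully so as to guarantee that the characteristic monomial appears with nonzero coefficient and that contributions from distinct $I$'s remain genuinely disjoint. Once this verification is carried out, the spanning statement and the linear independence combine to yield the claimed isomorphism.
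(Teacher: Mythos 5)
Your proposal is correct and follows essentially the same route as the paper: decompose $H^{s+1}(X)/p$ via the $Q_0$-Bockstein structure, kill the $y_i$- and $h$-multiples using the fact that first Chern classes lie in $\tilde N^1$ together with Frobenius reciprocity (the paper packages this as Lemma 2.3), and detect the surviving generators with the composite Milnor operations $Q_{i_1}\cdots Q_{i_{s-2}}Q_0$ and the main lemma (Lemma 3.6). Your explicit distinct-support argument for linear independence of arbitrary combinations $\sum_I c_I Q_0(x_I)$ is in fact slightly more complete than the paper's proof, which only exhibits the nonvanishing of each individual generator.
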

\begin{proof}
The integral cohomology 
(modulo $p$) 
is isomorphic to
\[ H^*(BG)/p\cong Ker(Q_0)\]
\[\cong H(H^*(BG;\bZ/p);Q_0)
\oplus
Im(Q_0)\]
where $H(-;Q_0)=Ker(Q_0)/Im(Q_0)$ is the homology with the differential $Q_0$.
It is immediate that  $H(H^*(B\bZ/p;\bZ/p); Q_0)\cong \bZ/p$.
By the K$\ddot{u}$nneth formula, we have 
$H(H^*((BG;\bZ/p);Q_0)\cong ( \bZ/p)^{n\otimes}\cong \bZ/p$.
Hence we have 
\[ H^*(BG)/p\cong
 \bZ/p\{1\}\oplus Im(Q_0)\]
\[ \cong \oplus _s\bZ/p[y_1,...y_n](1,Q_0(x_{i_1}... .x_{i_s})| 1\le i_1<...< i_s\le n) \]
where the notation $R(a,...,b)$ (resp. $R\{a,..., b\}$)
means the $R$-submodule (resp. the free $R$-module)
generated by $a,...,b$. Here we note $H^+(BG)$ is just $p$-torsion.

Also note that $y_1,...,y_n$ are represented by the Chern classes $c_1$.  From Lemma 2.3, we see
$  Ideal(y_1,...,y_n)=0\in DH^*(X).$

We know $Q_i(x_j)=y_j^{p^i}$ and $Q_j$ is a derivation. 
  We have the theorem from Lemma 4.3 and 
the reciprocity  law
 \[ Q_{i_1}...Q_{i_{s-2}}Q_0(x_{i_1}...x_{i_s})
=y_{i_1}^{p^{i_1}}...y_{i_{s-2}}^ {p^{i_{s-2}}}
y_{i_{s-1}}x_{i_s}+...\not =0.\]
(Note the $n=|\alpha'|$ in Lemma 4.3 is written by $s-1$ here.)
\end{proof}

\begin{cor}  If $n\not =n'\ge 3$, then $X(N)_n$ and $X(N)_{n'}$
are not stable birational equivalent.
\end{cor}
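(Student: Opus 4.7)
The plan is to deduce the corollary directly from the dimension formula of Theorem 4.4 combined with the stable birational invariance of $DH^*(\,\cdot\,)/p$ (Proposition 2.4 of \cite{Be-Ot}).

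First I would recall the invariance principle: whenever two smooth projective varieties $X$ and $Y$ are stable birationally equivalent, Proposition 2.4 of \cite{Be-Ot} yields an isomorphism of graded $\bZ/p$-vector spaces $DH^*(X)/p \cong DH^*(Y)/p$, so in particular the graded dimensions must agree in every degree. I would then apply Theorem 4.4 to compute these dimensions for the Ekedahl approximations: in the range $s+1<N$,
\[ DH^{s+1}(X(N)_n)/p \;\cong\; \bZ/p\{Q_0(x_{i_1}\cdots x_{i_s}) \mid 1 \le i_1 < \cdots < i_s \le n\}, \]
so that $\dim_{\bZ/p} DH^{s+1}(X(N)_n)/p = \binom{n}{s}$.

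Without loss of generality assume $n > n' \ge 3$ and $N > n+1$, so that Theorem 4.4 applies to both $X(N)_n$ and $X(N)_{n'}$ in degree $n+1$. Taking $s = n$, I obtain dimension $\binom{n}{n} = 1$ on the $X(N)_n$ side, realised by the explicit class $Q_0(x_1\cdots x_n)$, and dimension $\binom{n'}{n} = 0$ on the $X(N)_{n'}$ side. These dimensions disagree, contradicting the equality forced by stable birational equivalence, so $X(N)_n$ and $X(N)_{n'}$ cannot be stably birationally equivalent.

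The argument is essentially immediate once the two ingredients are in place; there is no serious obstacle. The one point requiring mild care is choosing $N$ large enough that Theorem 4.4 computes $DH^{n+1}$ correctly for both approximations, which is achieved simply by the hypothesis $N > n+1$ since $n' < n$. In spirit the statement is a corollary of Theorem 4.4 in the most literal sense: the formula for $\dim DH^{s+1}$ depends on $n$ through the binomial coefficient $\binom{n}{s}$, and specialising to $s=n$ is exactly what distinguishes the two approximations up to stable birational equivalence.
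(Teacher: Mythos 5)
Your proposal is correct and is essentially the argument the paper intends: Corollary 4.5 is stated immediately after Theorem 4.4 precisely so that it follows from the graded isomorphism $DH^{s+1}(X)/p\cong \bZ/p\{Q_0(x_{i_1}\cdots x_{i_s})\}$ together with the stable birational invariance of $DH^*(\,\cdot\,)/p$ from Proposition 2.4 of Benoist--Ottem, and the paper's own earlier remarks (Corollary 4.2 and the comment after Lemma 4.3) single out exactly the top class $Q_0(x_1\cdots x_n)$, which survives for $X(N)_n$ but has no counterpart for $X(N)_{n'}$ with $n'<n$. Your dimension count $\binom{n}{s}$ versus $\binom{n'}{s}$ at $s=n$ is just a clean restatement of that comparison.
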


The above corollary also holds when $ch(k)=0$ and $k$ is an algebraic closed field by the base change theorem.

For each field $k=\bar k$, it is known from Voevodsky (for $p; odd$)
\[H^{*,*'}(BG_n;\bZ/p) \cong \bZ/p[y_1,....y_n,\tau]\otimes
\Lambda(x_1,...,x_n)\]
where $deg(x_i)=(1,1)$ and $Q_0(x_i)=y_i$.
Therefor we can identify 
\[ Q_0(x_1...x_m) \in H^{*}_{\acute{e}t}(BG_n;\bZ/p)\quad when \ \bar k=k.\]

Let us write $H_{\acute{e}t}^*(X;\bZ_p)$ simply by 
$H_{\acute{e}t}^*(X)$.
Let $G$ be an algebraic group which has an approximation
$X_G$ such that 
\[ H_{\acute{e}t}^*(X_G;\bZ_p)\cong H^*(BG\times \bP^{\infty})\otimes \bZ_p
\quad for\ *<N\]

We consider the maps
\[ \psi\ :\ N^1H^*_{\acute{e}t}(X)\subset  H^*_{\acute{e}t}(X)\to H^*_{\acute{e}t}(\bar X)
\to \ H^*_{\acute{e}t}(X(\bC)) \to H^*(X(\bC)).
\]
%\[\bZ/p\{\alpha\}\subset DH^*(X_{\acute{e}t}(X).\]
\begin{lemma}
Let $k\subset \bC$ (not assumed an algebraic closed  field).
Let $\alpha\in N^1H^*_{\acute{e}t}(X)$ and 
 $Q_I(\psi (\alpha))\not =0\in H^*(X(\bC);\bZ/p).$ Then
\[ DH^*_{\acute{e}t}(X;\bZ_p)/p\supset  \bZ/p\{\alpha\}.\]
Hence $X$ is not stable rational.
\end{lemma}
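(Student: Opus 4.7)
The plan is to argue by contradiction and reduce to the Betti statement already established in Lemma 3.6. Suppose that $\alpha\in \tilde N^1 H^*_{\acute{e}t}(X;\bZ_p)$; I will derive $Q_I(\psi(\alpha))=0$, contradicting the hypothesis.

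Unpacking the assumption, there exists a proper morphism $f\colon Y\to X$ from a smooth projective $k$-variety $Y$ of codimension at least one and a class $\alpha'\in H^*_{\acute{e}t}(Y;\bZ_p)$ with $f_*(\alpha')=\alpha$. The first step is to transport this relation to the Betti side. Base change along $k\hookrightarrow \bar k \hookrightarrow \bC$ produces a proper morphism $f_{\bC}\colon Y(\bC)\to X(\bC)$ of smooth complex varieties. The \'etale Gysin pushforward commutes with base change to an algebraically closed extension, and Artin's comparison theorem is compatible with Gysin maps for smooth proper morphisms (both sides being controlled by Poincar\'e duality); passing to the inverse limit $\bZ_p=\varprojlim \bZ/p^s$ is then formal. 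Hence
\[
\psi(\alpha)=(f_{\bC})_*\bigl(\psi(\alpha')\bigr)\in H^*\bigl(X(\bC);\bZ_p\bigr).
\]

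The second step is to move from $\bZ_p$ to integral Betti coefficients so that Tamanoi's description of $MU^*(K(\bZ,n))$ is available. Since $Y(\bC)$ is a compact smooth manifold, $H^*(Y(\bC);\bZ)$ is finitely generated, and flatness of $\bZ_p$ over $\bZ$ yields $H^*(Y(\bC);\bZ_p)\cong H^*(Y(\bC);\bZ)\otimes_{\bZ}\bZ_p$. Writing $\psi(\alpha')=\sum_i c_i\gamma_i$ with $\gamma_i\in H^*(Y(\bC);\bZ)$ and $c_i\in\bZ_p$, I apply the argument of Lemma 3.6 to each integral summand: regarding $\gamma_i$ as a homotopy class $\gamma_i\colon Y(\bC)\to K(\bZ,|\gamma_i|)$, Lemma 3.5 places the iterate $Q_{i_1}\cdots Q_{i_{n-2}}\gamma_i$ in the image of $MU^*(Y(\bC))/p\to H^*(Y(\bC);\bZ/p)$, whereupon Lemma 3.3 forces the next operation $Q_{i_{n-1}}$ to kill it. By $\bZ_p$-linearity this gives $Q_I\psi(\alpha')=0$ in $H^*(Y(\bC);\bZ/p)$, and commutativity of $Q_I$ with $(f_{\bC})_*$ (Lemma 3.1) then yields
\[
Q_I\bigl(\psi(\alpha)\bigr)=(f_{\bC})_*\bigl(Q_I\psi(\alpha')\bigr)=0,
\]
the desired contradiction.

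The main obstacle I foresee is the first step: carefully checking that the composite $\psi$ intertwines \'etale Gysin maps with Betti Gysin maps for smooth proper $f_{\bC}$ at $\bZ_p$-coefficients. Base change and the Artin comparison are individually known to be compatible with proper pushforwards for smooth proper maps, but stitching them together (and passing to the limit in $s$) requires a small amount of care. Once this compatibility is in hand, the rest is a mechanical reduction to the Betti case of Lemma 3.6 applied component-by-component via the splitting $H^*(Y(\bC);\bZ_p)\cong H^*(Y(\bC);\bZ)\otimes \bZ_p$.
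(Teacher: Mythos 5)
Your proposal is correct and follows essentially the same route as the paper: assume $\alpha\in\tilde N^1H^*_{\acute{e}t}(X)$, base change the Gysin presentation to $\bC$ so that $\psi(\alpha)$ acquires one in Betti cohomology, and then invoke the main lemma (Lemma 3.6) over $\bC$ to force $Q_I(\psi(\alpha))=0$, contradicting the hypothesis. The only difference is one of exposition: the paper cites Lemma 3.6 as a black box, whereas you unfold its Tamanoi/$MU$-theory internals and explicitly address the compatibility of \'etale and Betti Gysin maps under comparison and the passage from $\bZ_p$ to $\bZ$ coefficients, points the paper leaves implicit.
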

\begin{proof}
By the assumption, the main lemma implies
$  DH^*(X(\bC))\supset \bZ/p\{\psi \alpha\}.$

This implies a contradiction if 
$\bZ/p\{\alpha\}=0 $ in $ DH^*_{\acute{e}t}(X).$
Similarly, the stable rationality for $X$ imlplies
that for $X(\bC)$, which is a contradiction.
(Note here, we do $not$ assume of the stable birational
invariance for $DH_{\acute{e}t}^*(X)$.)
\end{proof}
For example,  Lemma 4.3 holds for all $k\subset \bC$.

%Then we see 
%\begin{thm}
%Let $k$ be a filed such that $BG_n=(\bZ/p)^n$ has the projective
%approximation with $ch(k)\not =p$.  Then $X_n$ and $X_m$ is not stable birational equivalent  for $m\not =n$.
%\end{thm}

\section{connective groups, $SO_n$}

Let $SO_{n}$ be the special orthogonal
group. Its mod(2) cohomology is
\[H^*(BSO_n;\bZ/2)\cong \bZ/2[w_2,...,w_n] \]
where $w_i$ is the Stiefel-Whitney class for $SO_n\subset 
O_n$. We know $Q_0w_{2m}=w_{2m+1}$.
%We note 
%\[O_{2m+1}\cong SO_{2m+1}\times \bZ/2\quad and \quad 
%H^*(BO_n;\bZ/2)\cong (H^*((B\bZ/2)^n;\bZ/2)^{S_n}.\]

\begin{thm} (\cite{YaC}) Let $X_n=X_n(N)$ be approximations for $BSO_n$  for $n\ge 3$.  Moreover, let $|Q_1...Q_{2m-1}(w_{2m+1})|<N$.
Then we have
\[  DH^*(X_{2m+1})\supset  \bZ/2\{w_3,w_5,...,w_{2m+1}\}
\quad for \  all \  2m+1\le  *<N.\]
\end{thm}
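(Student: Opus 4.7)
The plan is to apply Lemma 3.4 to the low-degree class $w_3$ and Lemma 3.6 to each higher class $w_{2i+1}$ with $i\ge 2$. For either lemma to apply one must (a) confirm that $w_{2i+1}$ lies in the weak coniveau filtration $N^1H^*(X)$, and (b) produce a non-vanishing iterated Milnor operation $Q_{j_1}\cdots Q_{j_{n-1}}(w_{2i+1})$ excluding membership in $\tilde N^1$.

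For (a), the relation $Q_0 w_{2i}=w_{2i+1}$ identifies $w_{2i+1}$ as the $\mathrm{mod}\,2$ reduction of an integral Bockstein, so there is a $2$-torsion lift in $H^{2i+1}(X)$. By Lemma 2.4 (\cite{Co-Vo}) every torsion class belongs to $N^1H^*(X)$, and $g^*$ transports this to the pullback of $w_{2i+1}$ on the approximation $X$, which is identified with the Stiefel--Whitney class in the range $*<N$.

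For (b), I would use the splitting principle: restrict along $BE\hookrightarrow BSO_{2m+1}$ for a maximal elementary abelian $2$-subgroup $E\cong(\bZ/2)^n$, so each $w_k$ pulls back to the elementary symmetric function $e_k(t_1,\ldots,t_n)$ on the degree-$1$ generators of $H^*(BE;\bZ/2)$. Using that the $Q_j$ are derivations with $Q_j(t_k)=t_k^{2^{j+1}}$, one can evaluate the iterated operation $Q_{j_1}\cdots Q_{j_{n-1}}(e_{2i+1})$ as an explicit symmetric polynomial in $\bZ/2[t_1,\ldots,t_n]$. With the indices $j_k=k$ one isolates a surviving monomial of the form $t_{k_1}^{2^2}t_{k_2}^{2^3}\cdots t_{k_{n-1}}^{2^{n-1}}\,t_{k_n}\cdot(\text{symmetric completion})$, which shows the output is nonzero. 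The hypothesis $|Q_1\cdots Q_{2m-1}(w_{2m+1})|<N$ is exactly what guarantees these witnesses still live in the range where $g^*:H^*(BG\times\bP^\infty)\to H^*(X)$ is an isomorphism, so the non-vanishing transfers from $BSO_{2m+1}$ back to $X$.

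Combining (a) and (b), Lemma 3.4 rules out $w_3\in\tilde N^1$ (via $Q_1(w_3)\ne 0$), while Lemma 3.6 with $c=1$, $n=2i-1$ handles $w_5,\ldots,w_{2m+1}$; since the classes sit in distinct odd degrees they are automatically $\bZ/2$-linearly independent in $\bigoplus_i DH^{2i+1}(X)/2$. The main obstacle is the combinatorial bookkeeping in (b): while each derivation step is elementary, one must keep track of the symmetric polynomial resulting from $n-1$ successive Milnor operations and pin down a single monomial term witnessing nontriviality. A convenient reduction is to restrict further to a rank-$1$ or rank-$2$ subgroup of $E$ where only a few $t_k$ survive, so that the relevant monomial can be read off directly.
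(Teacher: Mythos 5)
Your proposal follows the same architecture the paper sets up for this theorem (which it cites to \cite{YaC}): membership $w_{2i+1}\in N^1H^{2i+1}(X)$ comes from the fact that $w_{2i+1}=Q_0(w_{2i})$ lifts to a $2$-torsion integral class together with Lemma 2.4 of Colliot-Th\'el\`ene--Voisin, and exclusion from $\tilde N^1$ comes from the main lemmas (Lemma 3.4 for $w_3$, Lemma 3.6 with $c=1$, $n=2i-1$ for the higher classes) once one exhibits a nonvanishing iterated Milnor operation; your use of the splitting principle to verify the latter is the same device the paper uses for $(\bZ/p)^n$ in Theorem 4.4 and Lemma 7.3, and the reading of the degree hypothesis as keeping the witnesses inside the range where $g^*$ is an isomorphism is correct.

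The one step that would fail is your proposed ``convenient reduction'' to a rank-$1$ or rank-$2$ elementary abelian subgroup. Since each $Q_j$ is a derivation, $Q_j$ annihilates squares, so an iterated operation $Q_{i_1}\cdots Q_{i_s}$ applied to a product of linear forms survives only if at least $s$ distinct forms occur to odd multiplicity; on a rank-$2$ restriction the standard representation of $SO_{2i+1}$ forces repeated (or trivial) characters, and already for $w_5$ the length-two operation $Q_{i_1}Q_{i_2}$ dies on every rank-$2$ restriction. Detecting $Q_{i_1}\cdots Q_{i_{2i-2}}(w_{2i+1})\ne 0$ genuinely requires an elementary abelian subgroup of rank comparable to $2i$ (the full diagonal $(\bZ/2)^{2m}\subset SO_{2m+1}$ works, exactly as the paper's own computation for $(\bZ/p)^{n+1}$ uses the full rank), or else a direct Wu-formula computation in $\bZ/2[w_2,\ldots,w_{2m+1}]$. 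Your main line (the full symmetric-function computation, isolating the unique monomial with the prescribed distinct $2$-power exponents) is fine; just discard the low-rank shortcut.
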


{\bf  Remark.} When $G=SO_3$, the inclusion in the above theorem is isomorphic.   However, when $G=SO_5$, we can not see whether 
$Q_0(w_2w_4)\in H^7(X)$ is zero or not in $DH^7(X)/2$.

Let $G=SO_5$.  Indeed, we can see the homology by $Q_0$ is given
\[ H(H^*(BG;\bZ/2);Q_0)\cong \bZ/2[c_2,c_4]\quad where \ c_i=w_i^2,\]
\[Im(Q_0)\cong \bZ/2[c_2,c_3,c_4,c_5](Q_0(w_2).Q_0(w_4),Q_0(w_2w_4)).\]
Hence $H^*(BG)/2$ is generated by $1,w_3,w_5. Q_0(w_2w_4)$ as a
$\bZ/2[c_2,c_3,c_4,c_5]$-module.  Hence we have 
\begin{lemma}  Let $G=SO_5$.
There ie a surjection 
\[ \bZ/2\{w_3.w_5, Q_0(w_2w_4)\} \twoheadrightarrow
DH^*(X_G)/2.\]
\end{lemma}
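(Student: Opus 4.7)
The plan is to combine the explicit module description of $H^*(BG)/2$ stated in the paragraph immediately preceding the lemma with Lemma 2.3 (which kills the Chern ideal in $DH^*$) and Lemma 2.4 (which places $2$-torsion classes into $N^1$). The claimed surjection should then fall out by collapsing the module presentation along the Chern ideal.

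First I would take for granted the decomposition already written out: as a $\bZ/2[c_2,c_3,c_4,c_5]$-module,
\[ H^*(BG)/2 \;=\; \bZ/2[c_2,c_3,c_4,c_5]\cdot\{1,\, w_3,\, w_5,\, Q_0(w_2 w_4)\}, \]
with $c_i = w_i^2$. By the defining property of the Ekedahl approximation, this description carries over to $H^*(X_G)/2$ in the relevant degree range (take $N$ larger than the degrees of all four module generators together with any bounded Chern monomials one wants to multiply against).

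Second, I would identify each $c_i$ with an honest Chern class. Let $V$ be the standard representation of $SO_5$; then $c_i(V\otimes\bC)\equiv w_i(V)^2 \pmod 2$ by the classical Wu relation, so $c_2,c_3,c_4,c_5\in Ch^*(X_G;\bZ/2)$. Applying Lemma 2.3 produces a surjection from $N^1H^*(X_G;\bZ/2)$ modulo the ideal generated by $c_2,c_3,c_4,c_5$ onto $DH^*(X_G)/2$.

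Third, I would place the three nontrivial generators into $N^1$. Since $w_3=Q_0w_2$, $w_5=Q_0w_4$ and $Q_0(w_2w_4)$ all arise as Bocksteins, they are $2$-torsion classes in $H^*(X_G;\bZ)$, and Lemma 2.4 puts them in $N^1H^*(X_G)$; their mod $2$ reductions therefore land in the domain of the Lemma 2.3 surjection. The generator $1\in H^0$, by contrast, is not in $N^1$ and drops out.

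Finally, collapsing the $\bZ/2[c_2,c_3,c_4,c_5]$-module presentation of $H^*(X_G)/2$ modulo $(c_2,c_3,c_4,c_5)$ leaves only the four-dimensional $\bZ/2$-space spanned by $\{1,w_3,w_5,Q_0(w_2w_4)\}$; discarding $1$ as above yields $\bZ/2\{w_3,w_5,Q_0(w_2w_4)\}\twoheadrightarrow DH^*(X_G)/2$.

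The main obstacle I foresee is not the mod $2$ algebra but the bookkeeping for the approximation: one must verify that $X_G$ genuinely realises both the stated $\bZ/2[c_2,c_3,c_4,c_5]$-module decomposition and the Chern subring of $BG$ in the chosen degree range, and that no additional contribution to $N^1H^*(X_G;\bZ/2)$ creeps in outside this presentation. Both are standard consequences of the Totaro/Ekedahl construction, but must be invoked carefully for the reduction from $X_G$ to $BG$ to be legitimate.
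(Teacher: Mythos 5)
Your argument follows the paper's route exactly: the paper's entire proof of this lemma is the displayed computation of $H(H^*(BSO_5;\bZ/2);Q_0)$ and of $\Img(Q_0)$ in the paragraph immediately above it, followed by ``Hence we have''. You correctly supply the ingredients the paper leaves implicit, namely Lemma 2.3 (killing the Chern ideal in $DH^*$), Lemma 2.4 (torsion classes lie in $N^1$, so $w_3$, $w_5$, $Q_0(w_2w_4)$ sit in the domain while $1$ drops out for degree reasons), and the relation $c_i(V\otimes\bC)\equiv w_i^2 \bmod 2$ identifying the $c_i$ as Chern classes.

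However, the module-generation statement that you take ``for granted'' from the paper is not correct as written, and this is a genuine gap shared by your proposal and the paper. Since $Q_0$ is a derivation with $Q_0w_3=Q_0w_5=0$, the class $Q_0(w_2w_5)=w_3w_5$ lies in $\Img(Q_0)\subset H^8(BSO_5)/2$, but it is not in the $\bZ/2[c_2,c_3,c_4,c_5]$-module generated by $1,w_3,w_5,Q_0(w_2w_4)$: in degree $8$ that module is spanned by $c_2^2$ and $c_4$ only, whereas $\Ker(Q_0)$ is spanned by $w_2^4$, $w_4^2$ and $w_3w_5$. Moreover $w_3w_5$ is square-free, hence not in the ideal $(c_2,c_3,c_4,c_5)=(w_2^2,w_3^2,w_4^2,w_5^2)$ either. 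The same phenomenon occurs for $Q_0(w_2w_3w_4)$, $Q_0(w_2w_4w_5)$, and so on: $\Img(Q_0)$ requires the $Q_0$ of all square-free monomials as module generators, not just the three listed. Consequently your final ``collapse'' does not leave only the three positive-degree classes claimed. To obtain the asserted surjection one must either enlarge the source by these extra classes or show separately that they lie in $\tilde N^1H^*(X_G)+\mathrm{Ideal}(Ch^*(X_G))$ (for instance, that the integral class $\tilde\beta(w_2)\tilde\beta(w_4)$ is a Gysin image); neither your proposal nor the paper addresses this point.
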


\begin{cor} Let $X_n=X_n(N)$ be approximation for $BSO_n$  for $n\ge 3$.  For $m\not =m'$, we see that $X_{2m+1}$ and $X_{2m'+1}$
are not stable rational equivalence. 
\end{cor}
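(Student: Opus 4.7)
The plan is to combine the stable birational invariance of $DH^*$ from Benoist--Ottem (Proposition~2.4 of \cite{Be-Ot}) with the lower bound supplied by Theorem~5.1. Without loss of generality I take $m<m'$ and suppose, for contradiction, that $X_{2m+1}$ and $X_{2m'+1}$ are stably birational equivalent. Then in the range $*<N$ where both approximations are valid, one obtains an isomorphism of graded $\bZ/2$-vector spaces
\[
DH^*(X_{2m+1})/2\;\cong\;DH^*(X_{2m'+1})/2.
\]

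By Theorem~5.1 applied to $X_{2m'+1}$, the Stiefel--Whitney class $w_{2m'+1}$ represents a nonzero element of $DH^{2m'+1}(X_{2m'+1})/2$. Under the postulated isomorphism this forces $DH^{2m'+1}(X_{2m+1})/2\not=0$. I would then derive a contradiction by showing that $DH^{2m'+1}(X_{2m+1})/2=0$. In the relevant range $H^*(X_{2m+1};\bZ/2)\cong\bZ/2[w_2,\ldots,w_{2m+1}]$, so every candidate class in degree $2m'+1$ is a polynomial in $w_2,\ldots,w_{2m+1}$; the goal is to show that each such polynomial lies in $\tilde N^1H^{2m'+1}(X_{2m+1};\bZ/2)$ together with the Chern class ideal (the latter is killed in $DH^*$ by Lemma~2.3).

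The main obstacle is exactly this upper bound, which amounts to extending Lemma~5.2 from $SO_5$ to general $SO_{2m+1}$. Following the template used in the proof of Theorem~4.4, I would decompose $H^*(BSO_{2m+1};\bZ/2)$ modulo the Chern class ideal (generated by the mod~$2$ Pontryagin-type classes $w_{2i}^2$) as $\bZ/2\{1\}\oplus\mathrm{Im}(Q_0)$, and then enumerate the residual $Q_0$-generators of the form $Q_0(w_{i_1}\cdots w_{i_s})$. Since such generators all live in a finite range of degrees bounded in terms of $m$, for $2m'+1$ beyond this bound we immediately get $DH^{2m'+1}(X_{2m+1})/2=0$. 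The delicate case is consecutive $m'=m+1$, where one must rule out every $Q_0$-generator in degree $2m+3$ by hand---analogous to the indeterminate class $Q_0(w_2w_4)\in H^7(X_{SO_5})$ left unresolved in Lemma~5.2---for instance by exhibiting smooth proper transfers that realize each candidate as an element of $\tilde N^1$.
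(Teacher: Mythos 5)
Your overall strategy --- stable birational invariance of $DH^*$ via Proposition 2.4 of \cite{Be-Ot} combined with the lower bound of Theorem 5.1 --- is the same one the paper implicitly relies on (the paper states this corollary with no proof at all). But the step you yourself call ``the main obstacle,'' namely the vanishing $DH^{2m'+1}(X_{2m+1})/2=0$, is a genuine gap, and it is not one you can expect to fill with the tools at hand: for the very first nontrivial pair $(m,m')=(2,3)$ the paper explicitly declares the needed vanishing open. The Remark after Theorem 5.1 states that for $G=SO_5$ one cannot decide whether $Q_0(w_2w_4)\in H^7(X)$ is zero in $DH^7(X)/2$, and Lemma 5.2 accordingly gives only a surjection $\bZ/2\{w_3,w_5,Q_0(w_2w_4)\}\twoheadrightarrow DH^*(X_{SO_5})/2$, not an isomorphism. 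Concretely, the available bounds give $\dim DH^7(X_5)/2\in\{0,1\}$ and $\dim DH^7(X_7)/2\in\{1,2\}$, and the analogous intervals overlap in every degree, so no contradiction is reached. Your proposed remedy --- exhibiting proper transfers that place each residual generator $Q_0(w_{i_1}\cdots w_{i_s})$ into $\tilde N^1$ --- is precisely the assertion the paper says it cannot verify, so the argument does not close. (By contrast, the corresponding corollary for $(\bZ/p)^n$ is sound because Theorem 4.4 supplies the exact answer $DH^{s+1}(X)/p\cong\bZ/p\{Q_0(x_{i_1}\cdots x_{i_s})\}$, whose dimensions $\binom{n}{s}$ separate the groups; no such complete computation exists here.)

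Two smaller points. First, your claim that the $Q_0$-generators ``all live in a finite range of degrees bounded in terms of $m$,'' while true, only helps for very unbalanced pairs: the top generator $Q_0(w_2w_3\cdots w_{2m+1})$ sits in degree $2m^2+3m+1$, so the ``immediate'' vanishing of $DH^{2m'+1}(X_{2m+1})/2$ kicks in only for $m'$ roughly quadratic in $m$, not merely $m'>m$. Second, $H^*(X_{2m+1};\bZ/2)$ in the approximation range is $\bZ/2[w_2,\dots,w_{2m+1}]\otimes\bZ/2[t]$ because of the $\bP^{\infty}$ factor; the extra generator $t$ is a first Chern class and so is harmless by Lemmas 2.2 and 2.3, but it should be accounted for when you enumerate candidates in a given degree.
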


The above corollary holds for all $k\subset \bC$, by
the similar arguments done in the last places in the preceding section.

%\begin{cor} Suppose that there is a map 
%$j: X\to BSO_{2m+1} $  that
%\[ j^*(Q_1...Q_{2m-1}(w_{2m+1})\not =0 \]
%Then 
% For $m\not =m'$, we see that $X_{2m+1}$ d $X_{2m'+1}$
%are not stable rational equivalence. 
%\end{cor}

%Let $X$ be a projective variety defined over $k$ a subfield of $\bC$ i.e., $ch(k)=0$.  Taking $H_{\acute{e}t}^*(X;\bZ_p)=lim_{s\to\infty}H^*_{\acute{e}t}(X;\bZ/(p^s))$, we can define 
%the etale cohomology version $DH_{\acute{e}t}^*(X;\bZ_p)$.
%We do $not$ assume this version is an invariant of
%the stable rationality.  However, we have 
%\begin{cor} 
%Let $k$ be a subfield of $\bC$. 
%Let $X_n=X_n(N)$ be approximation for $BSO_n$  for $n\ge 3$.  Then we have
%\[  DH_{\acute{e}t}^*(X_{2m+1};\bZ_2)/2\supset  \bZ/2\{w_3,w_5,...,w_{2m+1}\}
%\quad for \  all \  2m+1\le  *<N.\ \]
%For $m\not =m'$, we see that $X_{2m+1}$ 
%and $X_{2m'+1}$
%are not stable rational equivalence (over $k$). 
%\end{cor}

\section{simply connected simple groups}

We next consider simply connected groups. Let us write by $X$ an 
approximation for $BG_2$ for the exceptional simple 
group $G_2$ of $rank=2$.
The $mod(2)$ cohomology is generated by the Stiefel-Whitney classes $w_i$ of
the real representation $G_2\to SO_7$
\[ H^*(BG_2;\bZ/2)\cong \bZ/2[w_4,w_6,w_7],
\quad P^1(w_4)=w_6,\ Q_0(w_6)=w_7, \]
\[H^*(BG_2)\cong (D'\oplus D'/2[w_7]^+)\quad where \ \ D'=\bZ[w_4,c_6].\]
Then we have  $Q_1w_4=w_7, Q_2(w_7)=w_7^2=c_7$ (the Chern class).

The Chow ring of $BG_2$ is also known
\[ CH^*(BG_2)\cong (D\{1,2w_4\} \oplus D/2[c_7]^+)\quad where \ \ D=\bZ[c_4,c_6]\ \ c_i=w_i^2.\]
In particular the cycle map $cl: CH^*(BG)\to H^*(BG)$ is injective.

It is known \cite{YaC} that $w_4\in N^1H^*(X;\bZ/2)$ and moreover  we can identify $w_4\in N^1H^*(X)$.
Since $Q_1(w_4)=w_7\not =0$,  from Lemma 4.1,
we have
$DH^4(X)\not =0$.  This fact is also written 
 in  \cite{Be-Ot}.  Moreover the isomorphism 
 $H^*(BG)/(c_4,c_6,c_7)\cong 
\Lambda(w_4,w_7)$ implies 
\begin{prop} (\cite{YaC})    For  $X$ an approximation for $BG_2$,
we have the surjection
\[ \Lambda(w_4,w_7)^+
\twoheadrightarrow 
DH^*(X)/2\quad for\ all \  *<N.\]
\end{prop}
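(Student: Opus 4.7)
The plan is to read off the surjection as a three-step composition: the generic quotient map $N^1H^*/\text{Ideal}\,Ch^*\twoheadrightarrow DH^*$ from Lemma 2.3, the stated ring identity $H^*(BG_2;\bZ/2)/(c_4,c_6,c_7)\cong \Lambda(w_4,w_7)$ recalled just above the proposition, and the verification that the exterior generators $w_4,w_7$ both lift into $N^1$. First I would use that $X$ is an approximation for $BG_2\times\bP^\infty$: in the range $*<N$ we may identify $H^*(X;\bZ/2)$ with $\bZ/2[w_4,w_6,w_7]\otimes\bZ/2[h]$, where $h$ is pulled back from $\bP^\infty$. Because $h=c_1(\mathcal{O}(1))$ is itself a Chern class, it belongs to $\text{Ideal}\,Ch^*(X;\bZ/2)$; the $\bZ/2[h]$-factor therefore collapses modulo the Chern-class ideal, and the quoted ring identity yields
\[
H^*(X;\bZ/2)\bigm/\text{Ideal}\,Ch^*(X;\bZ/2)\;\cong\;\Lambda(w_4,w_7).
\]

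Next I would verify that each of the three positive-degree monomials $w_4,\,w_7,\,w_4w_7$ actually lifts to $N^1H^*(X;\bZ/2)$. The containment $w_4\in N^1H^4(X)$ is already recorded (from \cite{YaC}) in the paragraph preceding the statement. For $w_7$, one uses that $2w_7=0$ in $H^7(BG_2;\bZ)$, so by the Colliot-Th\'el\`ene--Voisin result recalled as Lemma 2.4 the integral class $w_7$ lies in $N^1H^7(X;\bZ)$; reducing mod $2$, which is compatible with the coniveau filtration, gives $w_7\in N^1H^7(X;\bZ/2)$. Since $N^1$ is closed under multiplication by arbitrary cohomology classes (restriction to any open set is a ring map), $w_4w_7\in N^1H^{11}(X;\bZ/2)$ as well. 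Combining the inclusion $N^1H^*/\text{Ideal}\,Ch^*\hookrightarrow H^*/\text{Ideal}\,Ch^*\cong \Lambda(w_4,w_7)$ with the fact that $N^1$ has no degree-$0$ component, this forces the equality
\[
N^1H^*(X;\bZ/2)\bigm/\text{Ideal}\,Ch^*(X;\bZ/2)\;=\;\Lambda(w_4,w_7)^+.
\]

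Finally, applying the quotient map $N^1H^*/\text{Ideal}\,Ch^*\twoheadrightarrow DH^*(X)/2$ of Lemma 2.3 gives exactly the advertised surjection $\Lambda(w_4,w_7)^+\twoheadrightarrow DH^*(X)/2$ in the range $*<N$. The only genuinely delicate step is showing $w_7\in N^1$; everything else is a formal assembly of the cited lemmas and the explicit Chern-class structure of $H^*(BG_2;\bZ/2)$ which has already been worked out in the paragraph preceding the proposition. Note that the statement is only a surjection (not an isomorphism), so there is no need here to rule out further collapse of $w_4,w_7,w_4w_7$ inside $DH^*(X)/2$.
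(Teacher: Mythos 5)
Your overall strategy (Lemma 2.3, the ring identity, and the check that $w_4,w_7,w_4w_7$ lift to $N^1$) is the same as the paper's, but there is a genuine gap in the middle step: you run the computation in $H^*(X;\bZ/2)$, whereas $DH^*(X)/2$ is by definition $N^1H^*(X;\bZ)/(2,\tilde N^1H^*(X;\bZ))$, a subquotient of the \emph{integral} cohomology. This matters because your claimed isomorphism $H^*(X;\bZ/2)/\mathrm{Ideal}\,Ch^*(X;\bZ/2)\cong\Lambda(w_4,w_7)$ is false. The available Chern classes are $h$, $c_4=w_4^2$, $c_6=w_6^2$, $c_7=w_7^2$ (recall $CH^*(BG_2)\cong D\{1,2w_4\}\oplus D/2[c_7]^+$ with $D=\bZ[c_4,c_6]$, so nothing algebraic lives in cohomological degree $6$); hence $w_6$ itself is not killed, and the quotient of the mod $2$ cohomology is $\Lambda(w_4,w_6,w_7)$, not $\Lambda(w_4,w_7)$. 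Your chain of maps would therefore only produce a surjection from $\Lambda(w_4,w_6,w_7)^+$, with no mechanism to discard $w_6$, $w_4w_6$, $w_6w_7$, $w_4w_6w_7$.

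The identity $H^*(BG)/(c_4,c_6,c_7)\cong\Lambda(w_4,w_7)$ quoted just before the proposition refers to the integral cohomology $H^*(BG_2)\cong D'\oplus D'/2[w_7]^+$ with $D'=\bZ[w_4,c_6]$, in which there is no class $w_6$ at all: it supports the Bockstein $Q_0w_6=w_7$, so only $w_7$ and $c_6$ survive integrally, and in fact $H^6(BG_2;\bZ)=0$. Working integrally one gets $H^+(X)/(2,\mathrm{Ideal}\,Ch^*)\cong\bZ/2\{w_4,w_7,w_4w_7\}$ in degrees $*<N$, and then your remaining steps --- $w_4\in N^1H^4(X)$ as recorded from \cite{YaC}, $w_7\in N^1H^7(X)$ because it is a $2$-torsion integral class (Lemma 2.4), $w_4w_7\in N^1$ because $N^1$ is an ideal, and finally the quotient map of Lemma 2.3 --- assemble exactly as you intend to give $\Lambda(w_4,w_7)^+\twoheadrightarrow DH^*(X)/2$. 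So the argument is repairable, but only after replacing the mod $2$ cohomology by the integral one; as written, the key intermediate isomorphism does not hold.
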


{\bf Remark.} We can not see $w_7, w_4w_7 =0 $ or nonzero
in $DH^*(X)/2$.

The  cohomology of  other simply connected simple groups (with $2$-torsion) are written for example 
\[ H^*(BSpin_7;\bZ/2)\cong H^*(BG_2;\bZ/2)\otimes \bZ/2[w_8],\quad \]
\[ H^*(BSpin_8;\bZ/2)\cong H^*(BG_2;\bZ/2)\otimes \bZ/2[w_8,w_8'],... \] 
For the above groups $G$, there are the map $j: G_2\to G$ and the non zero element $w\in H^*(G)$ such that $j^*w=w_4$.

\begin{prop}  (\cite{YaC})    Let $G$ be a simply connected group such that $H^*(BG)$ has $p$-torsion.
Let $X=X(N)$ be an approximation for $BG$ for 
$N\ge 2p+3$. 
Then 
 there is 
$w\in H^4(X)$ such that 
      \[ DH^4(X)/p\supset 
\bZ/p\{w\} \]
 Hence these $X$ are not stable rational.
\end{prop}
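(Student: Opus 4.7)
The plan is to apply Lemma 3.4 with $s=4$ and $i=1$, following the blueprint of Proposition 6.1 for $G=G_2$. Since $G$ is simply connected simple with $p$-torsion in $H^*(BG)$, classical Lie group cohomology gives $H^4(BG)\otimes\bZ_p\cong\bZ_p$; take $w$ to be a generator of this summand, transported to $H^4(X)$ via the approximation map (which is an isomorphism in degree $4$ because $N\ge 2p+3>4$). For $p=2$, one can realize $w$ as the class whose restriction under an embedding $G_2\hookrightarrow G$ (available on all $Spin_n$ with $n\ge 7$ and on every exceptional simple group relevant here) is the Stiefel-Whitney class $w_4$; for odd $p$, the analogous universal degree-$4$ torsion class coming from a rank-one torsion subgroup plays the same role.

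The two hypotheses of Lemma 3.4 to verify are: (i) $w\in N^1H^4(X)$, and (ii) $Q_1w\neq 0$ in $H^{2p+3}(X;\bZ/p)$. For (ii), pulling back along $G_2\hookrightarrow G$ (or the odd-$p$ analogue) reduces the computation to the identity $Q_1w_4=w_7\neq 0$, which is read off the explicit $\bZ/p$-cohomology of the small group. Since $|Q_1w|=2p+3\le N$ by hypothesis, this nonvanishing persists in $H^*(X;\bZ/p)$ through the approximation.

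The main obstacle is (i). In general $w$ is not itself a Chern class -- for $G_2$ at $p=2$ only $2w_4=c_2$ is algebraic -- so one cannot conclude $w\in N^1$ directly from Lemma 2.3. The approach I would follow, as developed in \cite{YaC}, is to exhibit a motivic lift $\tilde w\in H^{4,3}(X;\bZ_p)$ of $w$, for instance as a Bockstein of a weight-$3$ class pulled back from a rank-one torsion subgroup, and then invoke Deligne's identification (Lemma 2.1), which places $\Img(H^{4,3}(X;\bZ/p)\to H^{4,4}(X;\bZ/p))$ inside $N^1H^4(X;\bZ/p)$. Once (i) and (ii) are secured, Lemma 3.4 yields $DH^4(X)/p\supset\bZ/p\{w\}$, and the stable birational invariance of $DH^*(-;A)/p$ established by Benoist--Ottem \cite{Be-Ot} then rules out stable rationality of $X$.
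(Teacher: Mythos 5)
Your overall skeleton matches the paper's: reduce to Lemma 3.4 (the $s=4$, $Q_1$ case), take $w$ the generator of $H^4(BG)\otimes\bZ_p\cong\bZ_p$ transported to $X$, and verify $Q_1w\neq 0$ by restricting to a small subgroup ($G_2\hookrightarrow G$ for $p=2$, where $Q_1w_4=w_7$; $F_4$ or $E_8$ and an elementary abelian $(\bZ/p)^3$ for $p=3,5$, where $j^*w=Q_0(x_1x_2x_3)$). You also correctly isolate the real difficulty, namely showing $w\in N^1H^4(X)$ with \emph{integral} coefficients, and correctly observe that $w$ itself is not a Chern class so Lemma 2.3 does not apply directly.

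However, your proposed mechanism for that step is a gap. You suggest exhibiting a motivic lift $\tilde w\in H^{4,3}(X;\bZ_p)$ of $w$, ``for instance as a Bockstein of a weight-$3$ class pulled back from a rank-one torsion subgroup.'' Pulling back along a subgroup inclusion produces a class on the subgroup's classifying space, not on $X$; it gives no lift of $w$ on $X$ itself, and producing such a lift is essentially equivalent to the statement $w\in N^1H^4(X)$ that you are trying to prove. Moreover Deligne's comparison (Lemma 2.1) as used here is a mod $p$ statement, whereas Lemma 3.4 requires $w\in N^1H^4(X;\bZ)$. The paper's actual argument is different and is the one you should supply: by Lemma 3.1 of \cite{YaRo}, $pw$ \emph{is} a Chern class, hence lies in $N^1H^4(X)$, so $pw$ restricts to zero on some open $U\subset X$; then $w|_U$ is a torsion class on the smooth variety $U$, and by the Colliot-Th\'el\`ene--Voisin theorem (Lemma 2.4, a consequence of Bloch--Kato) every torsion class lies in $N^1$, so $w$ vanishes on a further open $U'\subset U$ and therefore $w\in N^1H^4(X)$. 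Without this (or an equivalent construction), hypothesis (i) of Lemma 3.4 is not established and the proof is incomplete.
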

$Proof$.
It is only need to prove the theorem when  $G$ is a simple group
having $p$ torsion in $H^*(BG)$.
Let $p=2$.  
It is well known that there is an embedding  $j:G_2\subset G$ such that  (see \cite{Pi-Ya}, \cite{YaRo} for details)
\[ H^4(BG)\stackrel{j^*}{\cong} H^4(BG_2)\cong \bZ\{w_4\}.\] 

Let $w=(j^{*})^{-1}w_4\in H^4(BG)$.  
From Lemma 3.1 in \cite{YaRo}, we see that $2w$ is represented by Chern classes.
Hence  $2w$ is the image from $CH^*(X)$,  and so 
$2w\in N^1H^4(X)$.  This means there is an open set $U\subset X$ such that
$ 2w=0\in H^*(U)$ that is, $w$ is $2$-torsion in $H^*(U)$. Hence from Lemma 2.4,
we have $w\in N^1H^4(U)$, and so there is $U'\subset U$ such that $w=0\in H^4(U')$.
This implies $w\in N^1H^4(X)$.

Since $j^*(Q_1x)=Q_1w_4=w_7$, we see $Q_1w\not =0$.
From the main lemma (Lemma 4.1), we see $DH^4(X)\not =0$
for $G$.

For the cases $p=3,5$, we consider the exceptional groups $F_4,E_8$ respectively.  Each simply connected simple group
$G$ contains $F_4$ for $p=3$, $E_8$ for $p=5$. There is $w\in H^4(BG)$ such  that $px$ is a Chern class 
\cite{YaRo}, and $Q_1w)\not =0\in H^*(BG;\bZ/p).$
In fact,  there is embedding $j:(\bZ/p)^3\subset G$ with $j^*(w)=Q_0(x_1x_2x_3)$. 
Hence we have the theorem.
\begin{cor}  Let $X$ be an approximation for $BSpin_n$
with $n\ge 7$ or $BG$ for an exceptional group $G$.  Then $X$ is not stable rational.
\end{cor}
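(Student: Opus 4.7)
The plan is to reduce this corollary directly to Proposition 6.2 by verifying that each of the listed groups is a simply connected simple group whose classifying space has $p$-torsion for some prime $p$. Once this torsion is exhibited, the proposition produces a nonzero class $w \in DH^4(X)/p$, and the Benoist--Ottem stable birational invariance of $DH^*(X;A)/p$ rules out stable rationality, because a stably rational smooth projective variety is stably birational to a point and hence has $DH^{>0}(-)/p = 0$.

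First I would observe that $Spin_n$ for $n \ge 3$, $n \ne 4$, and each of the exceptional groups $G_2, F_4, E_6, E_7, E_8$ is simply connected and simple, so the "simply connected simple" hypothesis of Proposition 6.2 is automatic. Then I would verify the $p$-torsion hypothesis case by case. For $Spin_n$ with $n \ge 7$, one uses the classical fact (Quillen, Borel) that $H^*(BSpin_n;\bZ)$ has 2-torsion once $n \ge 7$; concretely, the embedding $G_2 \subset Spin_7 \subset Spin_n$ together with the computation already recalled in the paper ($H^*(BG_2)$ contains a nontrivial $\bZ/2$-summand generated by $w_7$) shows the 2-torsion persists. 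For the exceptional groups, one notes that $G_2$ has 2-torsion, $F_4, E_6, E_7$ have both 2- and 3-torsion, and $E_8$ has 2-, 3-, and 5-torsion; these are standard Borel-type computations cited in the paper via the references to \cite{YaRo}.

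With the hypotheses verified, Proposition 6.2 supplies, for a suitable prime $p$, an element $w \in H^4(X)$ with $Q_1 w \ne 0 \in H^*(X;\bZ/p)$ and $pw$ represented by Chern classes, so that $w \in N^1H^4(X)$ but $w \not\in \tilde N^1H^4(X)$, giving
\[ \bZ/p\{w\} \subset DH^4(X)/p. \]
Since $DH^*(X;\bZ)/p$ is a stable birational invariant (Proposition 2.4 in \cite{Be-Ot}) and vanishes on projective space in positive degree, $X$ cannot be stable rational. The main (and essentially only) obstacle in this argument is the torsion verification for large $n$ in the $Spin_n$ case; this is not a new calculation but merely a quotation of standard results, and since the embedding $G_2 \subset Spin_n$ is compatible with the identification of $w \in H^4(BG)$ used in Proposition 6.2, no further work is required.
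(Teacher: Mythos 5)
Your proposal is correct and follows essentially the same route as the paper: the corollary is an immediate consequence of Proposition 6.2, applied after observing that $Spin_n$ ($n\ge 7$) and the exceptional groups are simply connected simple groups whose classifying spaces carry $p$-torsion for a suitable prime (via $G_2\subset Spin_7\subset Spin_n$ for $p=2$, and $F_4$, $E_8$ for $p=3,5$), combined with the stable birational invariance of $DH^*(X)/p$ from Benoist--Ottem. The paper gives no separate argument beyond this, so there is nothing further to compare.
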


\section{retract birational and 
unramified cohomology}

Here we 
note the relations to retract rationally.
Recall (in $\S 2$) that 
Bloch-Ogus
give a spectral sequence such that its $E_2$-term is given by
\[E(c)_2^{c,*-c}\cong 
H_{Zar}^c(X,\mathcal{H}_{A}^{*-c})\Longrightarrow H_{\acute{e}t}^*(X;A).\]
%where $\mathcal{H}_{A}^*$ is the  Zariski sheaf induced from the  presheaf  given by $U\mapsto H_{\acute{e}t}^*(U;A)$
%for an open $U\subset X$.  
  By Orlov-Vishik-Voevodsky [Or-Vi-Vo], ([Te-Ya2] for $p:odd$,)
we know
 \begin{lemma} ([Or-Vi-Vo], [Vo5]) There is the long exact sequence 
  \[ H_{Zar}^{m-n-1}(X;\mathcal{H}_{\bZ/p}^n) \to H^{m,n-1}(X;\bZ/p)\stackrel{\times \tau}{\to}
  H^{m,n}(X;\bZ/p)\qquad \qquad  \]
  \[\qquad \qquad \to
H_{Zar}^{m-n}(X;\mathcal{H}_{\bZ/p}^n)\to H^{m+1,n-1}(X;\bZ/p)\stackrel{\times \tau}{\to}....\]
\end{lemma}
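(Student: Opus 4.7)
The plan is to derive the long exact sequence from the canonical distinguished triangle of truncations of $R\pi_*\bZ/p$, combined with the Beilinson--Lichtenbaum theorem proved by Voevodsky.

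First, I would invoke Beilinson--Lichtenbaum, which gives a quasi-isomorphism
\[\bZ/p(n) \xrightarrow{\sim} \tau_{\leq n} R\pi_*(\bZ/p)\]
of complexes of Zariski sheaves on $X$, where $\pi : X_{\acute{e}t}\to X_{Zar}$ is the change-of-topology morphism. Taking Zariski hypercohomology then identifies the motivic cohomology with truncated \'etale hypercohomology:
\[H^{m,n}(X;\bZ/p) \cong \mathbb{H}^m_{Zar}(X;\tau_{\leq n} R\pi_*\bZ/p).\]

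Second, I would identify the map $\times\tau : H^{m,n-1}(X;\bZ/p)\to H^{m,n}(X;\bZ/p)$ with the one induced by the canonical truncation morphism $\tau_{\leq n-1}R\pi_*\bZ/p\to \tau_{\leq n} R\pi_*\bZ/p$. Over $\bC$ the class $\tau \in H^{0,1}(\mathrm{Spec}\,\bC;\bZ/p)$ corresponds under Beilinson--Lichtenbaum to the unit in $H^0_{\acute{e}t}(\mathrm{Spec}\,\bC;\bZ/p)$, so cup product with $\tau$ factors through the multiplicative refinement of the Beilinson--Lichtenbaum isomorphism and becomes exactly this truncation map. This step is where I expect the main technical difficulty: one must track the multiplicative structure and the weight shift carefully in order to see that the two a priori different maps coincide on the nose.

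Third, the canonical distinguished triangle
\[\tau_{\leq n-1}R\pi_*\bZ/p \to \tau_{\leq n}R\pi_*\bZ/p \to \mathcal{H}^n_{\bZ/p}[-n] \xrightarrow{+1}\]
(with $\mathcal{H}^n_{\bZ/p}=R^n\pi_*\bZ/p$ the Zariski sheaf occurring in the Bloch--Ogus spectral sequence) yields, upon taking Zariski hypercohomology and using $\mathbb{H}^m(X;\mathcal{H}^n_{\bZ/p}[-n])= H^{m-n}_{Zar}(X;\mathcal{H}^n_{\bZ/p})$, the long exact sequence
\[\cdots \to H^{m-n-1}_{Zar}(X;\mathcal{H}^n_{\bZ/p}) \to H^{m,n-1}(X;\bZ/p) \xrightarrow{\times\tau} H^{m,n}(X;\bZ/p) \to H^{m-n}_{Zar}(X;\mathcal{H}^n_{\bZ/p}) \to \cdots\]
which is precisely the claimed sequence.
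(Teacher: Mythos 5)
The paper offers no proof of this lemma at all: it is quoted as a known result of Orlov--Vishik--Voevodsky (and Voevodsky), so there is nothing internal to compare against. Your derivation --- Beilinson--Lichtenbaum identifying $\bZ/p(n)$ with $\tau_{\leq n}R\pi_*\bZ/p$, the canonical truncation triangle with cone $\mathcal{H}^n_{\bZ/p}[-n]$, and the identification of the truncation map with multiplication by $\tau$ --- is precisely the argument in those cited sources, and it is correct, including your accurate flagging of the $\tau$-versus-truncation compatibility as the one genuinely technical point.
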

In particular, when $m=n$, the first $\times \tau$ is injective.

\begin{cor}    We have  the short exact sequence 
\[    0\to  H^{*,*}(X;\bZ/p)/(\tau) \to
H^0_{Zar}(X;\mathcal{H}^*_{\bZ/p})  \]
\[ \to  Ker(\tau: H^{*+1,*-1}(X;\bZ/p)
\to H^{*+1,*}(X;\bZ/p))\to 0.\]
\end{cor}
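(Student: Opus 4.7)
The strategy is to extract the short exact sequence directly from the long exact sequence of Lemma 7.1 by specializing the bidegree indices. Specifically, I would set $m = n$ in the long exact sequence
\[ \ldots \to H^{m,n-1}(X;\bZ/p) \stackrel{\times \tau}{\to} H^{m,n}(X;\bZ/p) \to H_{Zar}^{m-n}(X;\mathcal{H}^n_{\bZ/p}) \to H^{m+1,n-1}(X;\bZ/p) \stackrel{\times \tau}{\to} H^{m+1,n}(X;\bZ/p) \to \ldots \]
so that the Zariski term in the middle becomes $H^0_{Zar}(X;\mathcal{H}^n_{\bZ/p})$, the group of interest.

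Next, I would parse the resulting five-term piece
\[ H^{n,n-1} \stackrel{\times \tau}{\to} H^{n,n} \to H^0_{Zar}(X;\mathcal{H}^n_{\bZ/p}) \to H^{n+1,n-1} \stackrel{\times \tau}{\to} H^{n+1,n} \]
as a short exact sequence by the standard device of replacing the leftmost group by the cokernel and the rightmost by the kernel. The cokernel of $\times \tau : H^{n,n-1}(X;\bZ/p) \to H^{n,n}(X;\bZ/p)$ is, by definition of the notation introduced in Section 2, precisely $H^{*,*}(X;\bZ/p)/(\tau)$ in total degree $n$, while the kernel of $\times \tau : H^{n+1,n-1}(X;\bZ/p) \to H^{n+1,n}(X;\bZ/p)$ is the right-hand term in the desired sequence. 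Exactness of the long exact sequence then forces injectivity on the left and surjectivity on the right of the resulting three-term sequence, yielding the claimed
\[ 0 \to H^{*,*}(X;\bZ/p)/(\tau) \to H^0_{Zar}(X;\mathcal{H}^*_{\bZ/p}) \to \mathrm{Ker}\bigl(\tau : H^{*+1,*-1}(X;\bZ/p) \to H^{*+1,*}(X;\bZ/p)\bigr) \to 0. \]

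There is essentially no analytic obstacle here; the content is entirely packaged inside Lemma 7.1. The only point requiring a line of care is checking that the cokernel identification on the left is the right thing, i.e.\ that the image of $\times \tau$ in bidegree $(n,n)$ really coincides with $\tau \cdot H^{n,n-1}(X;\bZ/p)$ as a submodule of $H^{n,n}(X;\bZ/p)$, which is immediate from the definition. Thus the corollary is an automatic algebraic consequence of Lemma 7.1, and the parenthetical remark in that lemma (injectivity of the first $\times \tau$ when $m=n$) is not actually needed for the statement of the corollary, but it confirms that the sequence is also left-exact on the level of $\tau$-multiplication itself.
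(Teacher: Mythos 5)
Your proposal is correct and coincides with the paper's (implicit) argument: the corollary is obtained from the long exact sequence of Lemma 7.1 by setting $m=n$ and splicing off the cokernel of the first $\times\tau$ and the kernel of the second, exactly as you describe. Your observation that the parenthetical injectivity of $\times\tau\colon H^{n,n-1}\to H^{n,n}$ is not needed for the corollary itself is also accurate.
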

(Note 
$ H^{*,*}(X;\bZ/p)/(\tau)\cong H^{*}(X;\bZ/p)
/(N^1H^*(X;\bZ/p)).$  Hence we also write 
it as $H^*(X;\bZ/p)/N^1$.  This cohomology is called 
a stable cohomology and studied well by Bogomolov [Bo],
[Te-Ya2]

{\bf Remark.}  The $\bZ/2^s$ coffeciants version 
%(and hence $\bZ_2$ versions) 
of Lemma 7.1, Corollary 7.2 also
hold.

The unramified cohomology is written by this 
$H^0_{Zar}(X;\mathcal{H}^*_{\bZ/p})$,
  when $X$ is complete,
\[H_{ur}^*(X;\bZ/p) = H_{ur}^*(k(X);\bZ/p) 
\cong H^0_{Zar}(X;\mathcal{H}^*_{\bZ/p}),\]
 and it 
is an invariant of the retract rationality of $X$
(Lemma 3.1, 3.4 [Me]). 

By Totaro [Ga-Me-Se], the cohomological invariant of $G$
is written (while $BG$ is not complete)
\[ Inv^*(G;\bZ/p)\cong H^0_{Zar}(BG;\mathcal{H}_{\bZ/p}^*),\]

Here we consider the following lemma which shows the relation among $DH^*(X_G)$, $Inv^*(G)$ and $H_{ur}^*(X_G)$.
\begin{lemma}
Assume that $0\not =x \in H^m(BG;\bZ/p)/(N^1)$
and $x$ is dedicated by $A_m=(\bZ/p)^m$ i.e. $res_{/N}(x)\not=0$ for the restriction (of stable cohomologies)
\[res_{/N}: H^*(BG;\bZ/p)/N^1 \to H^*(BA_m;\bZ/p)/N^1
\cong \Lambda(x_1,...,x_m).\]
Then (for projective approximation $X$ for $BG$) we have \[
\begin{cases}
 Inv^*(G;\bZ/p)\supset \bZ/p\{x\}, \\ 
 H^*_{un}(X;\bZ/p)\supset \bZ/p\{x\} , \\
 DH^*(X)/p\supset \bZ/p\{Q_0(x)\} .
\end{cases} \]
\end{lemma}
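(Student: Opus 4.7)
The plan is to establish the three containments as separate but linked consequences: the first two from the injection of stable cohomology into $H^0_{Zar}(-;\mathcal{H}^*_{\bZ/p})$ provided by Corollary~7.2, and the third from the Milnor-operation obstruction of Lemma~3.6 applied after restriction to the subgroup $A_m\subset G$.

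For the first two containments, I combine Corollary~7.2 with Totaro's identification $Inv^*(G;\bZ/p)\cong H^0_{Zar}(BG;\mathcal{H}^*_{\bZ/p})$ and with $H^*_{ur}(X;\bZ/p)\cong H^0_{Zar}(X;\mathcal{H}^*_{\bZ/p})$ for complete smooth $X$. The injection $H^m(-;\bZ/p)/N^1\hookrightarrow H^0_{Zar}(-;\mathcal{H}^m_{\bZ/p})$ applied to $BG$ carries $0\neq x$ to a nonzero class in $Inv^m(G;\bZ/p)$; applied to $X$ with $g^*(x)\in H^m(X;\bZ/p)$ in place of $x$ -- still nonzero modulo $N^1$ because $H^+(\bP^\infty;\bZ/p)\subset N^1$ via the hyperplane Chern class -- it carries $g^*(x)$ to a nonzero class in $H^m_{ur}(X;\bZ/p)$.

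For the third containment, first I verify $Q_0(x)\in N^1H^{m+1}(X;\bZ/p)$: the integral Bockstein $\tilde\beta(x)\in H^{m+1}(X;\bZ)$ is $p$-torsion (Bockstein squared vanishes), hence lies in $N^1H^{m+1}(X;\bZ)$ by Lemma~2.4, and its mod-$p$ reduction $Q_0(x)$ therefore lies in $N^1H^{m+1}(X;\bZ/p)$. To show $Q_0(x)\notin\tilde N^1H^{m+1}(X;\bZ/p)\pmod p$, I apply Lemma~3.6 with $\alpha=Q_0(x)$, $c=1$, $n=m-1$, and operator sequence $I=(1,2,\ldots,m-2)$; by naturality of the Milnor operations under the map $X_{A_m}\to X$ of Ekedahl approximations induced by $A_m\hookrightarrow G$, it suffices to prove
\[ Q_1Q_2\cdots Q_{m-2}\,Q_0\bigl(res(x)\bigr)\neq 0 \quad\text{in}\ H^*(BA_m;\bZ/p). \]
Writing $res(x)=c\,x_1\cdots x_m+\eta$ with $0\neq c\in\bZ/p$ and $\eta\in N^1H^m(BA_m;\bZ/p)$ (the degree-$m$ component of $\Lambda(x_1,\ldots,x_m)$ is one-dimensional), the computation from the proof of Theorem~4.4 gives
\[ Q_1\cdots Q_{m-2}\,Q_0(x_1\cdots x_m) = y_1^p y_2^{p^2}\cdots y_{m-2}^{p^{m-2}}y_{m-1}x_m+(\text{other terms})\neq 0. \]

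The main obstacle is to show that the $\eta$-contribution $Q_1\cdots Q_{m-2}Q_0(\eta)$ vanishes. I propose a $(y,x)$-factor count: each monomial of $\eta$ contains at least one polynomial generator $y_j$ (since $N^1$ is the ideal generated by the Chern classes $y_1,\ldots,y_m$), and because $Q_0(y_j)=0$ while $Q_0(x_k)=y_k$, each monomial of $Q_0(\eta)$ carries at least two $y$-factors; the degree constraint $2a+b=m+1$ with $a\geq 2$ then forces $b\leq m-3$ exterior $x$-factors. Since each $Q_i$ with $i\geq 1$ acts on monomials by $Q_i(x_k)=y_k^{p^i}$ (consuming one $x$-factor per application) and vanishes on any monomial without $x$-factors, applying $m-2$ Milnor operators to a monomial having only $\leq m-3$ $x$-factors necessarily yields zero. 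Hence the main term survives uncancelled, Lemma~3.6 applies, and we obtain $DH^{m+1}(X)/p\supset\bZ/p\{Q_0(x)\}$.
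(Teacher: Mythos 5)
Your proof is correct and follows essentially the same route as the paper: the first two containments via the injection of the stable cohomology $H^*(-;\bZ/p)/N^1$ into $H^0_{Zar}(-;\mathcal{H}^*_{\bZ/p})$, and the third via the Milnor-operation obstruction (Lemmas 3.5--3.6) detected by restricting to $A_m$ and computing $Q_1\cdots Q_{m-2}Q_0(x_1\cdots x_m)\neq 0$. You additionally make explicit two steps the paper leaves implicit --- that $Q_0(x)\in N^1$ (via the torsion integral Bockstein and Lemma 2.4) and that the $N^1$-error term $\eta$ in $res(x)$ is annihilated by the operator string (your $(y,x)$-factor count is valid) --- which strengthens rather than changes the argument.
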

\begin{proof}
The first formula follows from
\[ Inv^*(G;\bZ/p)\cong H^0(BG;\mathcal{H}^*_{\bZ/p})
\supset H^*(BG;\bZ/p)/N^1.\]
The fact $x\not=0$ in $Inv^*(G;\bZ/p)$ follows from 
that $x$ is dedicated. 

The second formula comes from
$H_{ur}^*(X;\bZ/p)\cong  H^0(X;\mathcal{H}^*_{\bZ/p})$
where $X$ is smooth projective.

The last formula follows from the main lemma
(Lemma 3.4).
Let 
$Q_0(x)=\alpha \in 
\tilde N^cH^{n+2c}(X)$,  ($m=n+2c-1)$,  i.e.
there is  a smooth $Y$ of $dim(Y)=dim(X)-c$ with $f:Y\to X$ 
such that 
the transfer $f_*(\alpha')=\alpha$ for $\alpha'\in H^n(Y)$.

Identify  the map $\alpha': Y\to K$ with $\alpha'=(\alpha')^*\eta_n.$
We still see from Lemma 3.5,
   \[Q(\alpha')= Q_{i_1}...Q_{i_{n-2}}((\alpha')^*\eta_n)\in 
Im(MU^*(Y)\to H^*(Y;\bZ/p)).\]
From  Lemma 3.4,  we see
\[Q_{i_{n-1}}Q(\alpha')=Q_{i_{n-1}}Q_{i_1}...Q_{i_{n-2}}(\alpha')=0 \in H^*(Y;\bZ/p).\]  

Therefore $Q_{i_{n-1}}Q(\alpha)$
must be zero by the commutativity of $f_*$ and $Q_i$.
But $Q_{i_1}...Q_{i_{n-1}}Q_0(x)\not =0$ from  the assumption that $x$ is deduced from $A_{n+1}$.
In fact in $H^*(BA_{n+1};\bZ/p)$, we see (without $mod(N^1)$)
\[ Q_{i_1}...Q_{i_{n-1}}Q_0(x_1...x_{n+1})=
y_1^{p^{i_1}}...y_{n-1}^{p^{i_{n-1}}}
 y_n x_{n+1} +...\not =0.\]
\end{proof}

Now we consider the examples.
At first, we consider the case $G=A_n=(\bZ/p)^n$.
 and $X=X_{G}$.  It is known
from Garibarldy-Merkurjev-Serre  [Ga-Me-Se], Theorem 6.3 in [Te-Ya2] that \[
 Inv^*(G;\bZ/2)\cong H^{*,*}(BG;\bZ/2)/(\tau)
\cong \Lambda(x_1,...,x_n).\]
Since $X$ is (proper) approximation of $BG$, we have
\begin{thm}
Let $G=G_{n}=(\bZ/p)^n$ and $X=X_{G}$.  Then 
\[  H^{2*}_{ur}(X;\bZ/p)\supset  H^{2*,2*}(X;\bZ/p)/(\tau)
\cong  \Lambda(x_1,x_2,...,x_n)\]
in Corollary 7.2. 
\end{thm}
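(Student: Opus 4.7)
The plan is to combine three ingredients assembled immediately before the statement: Corollary 7.2 (which identifies $H^{*,*}(X;\bZ/p)/(\tau)$ as a subobject of $H^0_{Zar}(X;\mathcal{H}^*_{\bZ/p})$), the approximation property of $X$ (which transports the motivic computation from $BG\times\bP^{\infty}$ to $X$), and the Garibaldi--Merkurjev--Serre computation recalled just above the statement, namely $H^{*,*}(BG;\bZ/p)/(\tau)\cong\Lambda(x_1,\ldots,x_n)$ for $G=(\bZ/p)^n$.

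First, since $X$ is smooth and projective, the unramified cohomology coincides with $H^0_{Zar}(X;\mathcal{H}^*_{\bZ/p})$. Applying Corollary 7.2 then supplies the inclusion
\[ H^{*,*}(X;\bZ/p)/(\tau)\ \hookrightarrow\ H^0_{Zar}(X;\mathcal{H}^*_{\bZ/p})\ \cong\ H^{*}_{ur}(X;\bZ/p), \]
which reduces the theorem to identifying $H^{*,*}(X;\bZ/p)/(\tau)$. The main task is therefore the explicit description of this $\tau$-quotient.

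Second, I would transfer the computation to $BG$ via the approximation map $g:X\to BG\times\bP^{\infty}$. By the standard Ekedahl construction one gets an isomorphism of motivic cohomology in the relevant range, so the induced map on $\tau$-quotients is also an isomorphism. Using Voevodsky's presentation
\[ H^{*,*'}(BG;\bZ/p)\cong \bZ/p[\tau][y_1,\ldots,y_n]\otimes \Lambda(x_1,\ldots,x_n), \]
with $|x_i|=(1,1)$, $|y_i|=(2,1)$, $|\tau|=(0,1)$, together with $H^{*,*'}(\bP^{\infty};\bZ/p)\cong\bZ/p[\tau][h]$, $|h|=(2,1)$, K\"unneth gives the bigraded ring on $BG\times\bP^{\infty}$. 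A monomial $\tau^{a}x_{I}y_{J}h^{r}$ has cohomological degree $|I|+2|J|+2r$ and weight $a+|I|+|J|+r$, so it lies on the diagonal $*=*'$ precisely when $a=|J|+r$. Passing to the $\tau$-quotient forces $a=0$, hence $|J|=r=0$, leaving only the purely exterior terms and giving $H^{*,*}(X;\bZ/p)/(\tau)\cong\Lambda(x_1,\ldots,x_n)$.

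Combining the two steps yields the desired chain $H^{*}_{ur}(X;\bZ/p)\supset H^{*,*}(X;\bZ/p)/(\tau)\cong\Lambda(x_1,\ldots,x_n)$. The principal obstacle is bookkeeping rather than substance: one has to check that the approximation isomorphism, which is formulated in the introduction at the level of Betti (or \'etale) cohomology, actually descends to bidegree-$(*,*)$ motivic cohomology modulo $\tau$ in the relevant range. This is automatic for Totaro--Ekedahl approximations constructed from a generically free $G$-representation, and the extra $\bP^{\infty}$-factor contributes nothing new to the diagonal after reducing modulo $\tau$ (its sole diagonal class being $\tau h$, which dies).
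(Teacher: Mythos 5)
Your proposal is correct and follows essentially the same route as the paper: the inclusion comes from Corollary 7.2 together with $H_{ur}^*(X;\bZ/p)\cong H^0_{Zar}(X;\mathcal{H}^*_{\bZ/p})$ for $X$ smooth projective, the identification of the $\tau$-quotient is done on $BG\times\bP^{\infty}$, and the approximation map transports it to $X$. The only difference is that where the paper simply cites Garibaldi--Merkurjev--Serre and Tezuka--Yagita for $H^{*,*}(BG;\bZ/p)/(\tau)\cong\Lambda(x_1,\ldots,x_n)$, you rederive it by a degree--weight count in Voevodsky's presentation, which is a harmless (and self-contained) substitute.
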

 Writing $\alpha_i=
Q_0(x_1...x_i)$,  we still  have (Lemma 4.3)
\[ DH^*(X)/p\supset \bZ/p\{ 
\alpha_2,\alpha_3,...,\alpha_n\}.
\]
Then $X_{G_n}$ and $X_{G_{n'}}$ are not retract rational equivalent 
if $n\not =n'$.

{\bf Remark.}
From  ( Saltman [Sa]) , it is well known that  there is a finite group $G$ (e.g. $|G|=p^7$, $p:odd)$) such that 
\[ 0\not = x_2\in H^2_{ur}(k(W)^G;\bZ/p) \cap  H^{2,2}(BG;\bZ/p)/(\tau)
\quad  .\]
Here $G$ acts freely on a $\bC$-vector space $W$, and we have
\[ H_{ur}(K(W)^G;\bZ/p)\cong H_{Zar}(W//G;\mathcal{H}^*_{\bZ/p})
\subset H_{Zar}(BG;\mathcal{H}_{\bZ/p}^*)\]
such that $k(W//G)\cong k(W)^G$.
Hence $H^*_{un}(k(W)^G;\bZ/p)\not \cong H^*(k;\bZ/p)$.
So $k(W)^G$  is not purely transcendent over $k$.
(Hence $BG$ is not retract rational.)

{\bf Remark.}
We do $not$  assume $H^0_{Zar}(X;\mathcal{H}_{\bZ/p}^*)\cong 
H^0_{Zar}(X';\mathcal{H}_{\bZ/p}^*)$ for an other approximation $X'$.

%{\bf Remark.}
%Since stable rationality implies retract rationality, the preceding corollary is the just corollary of the above result. However
%note  in general,
%\[H_{nr}^*(k(GL_N/G);\bZ/p)\not \cong H^0(BG;\mathcal{H}_%{Z/p}^*),\]
%\[\quad k(X)\not \cong k(W)^G\quad W: \bC-vector\ space.\]

Next we consider the case $G=SO_{2m+1}$ and $X=X_{G_m}$.  It is known
from Garibarldy-Merkurjev-Serre  [Ga-Me-Se], Theorem 6.3 in [Te-Ya] that \[
 Inv^*(G;\bZ/2)\cong H^{*,*}(BG;\bZ/2)/(\tau)
\cong \bZ/2\{1,w_2,...,w_{2m}\}.\]
Since $X$ is (proper) approximation of $BG$, we have
\begin{thm}
Let $G=SO_{2m+1}$ and $X=X_{G}$.  Then 
\[  H^{2*}_{ur}(X;\bZ/p)\supset  H^{2*,2*}(X;\bZ/2)/(\tau)
\supset \bZ/2\{1,w_2,...,w_{2m}\}\]
in Lemma 7.1.  
\end{thm}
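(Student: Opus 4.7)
I would follow the template already established in the proof of the preceding Theorem 7.4 for the elementary abelian case, substituting Serre's computation of the cohomological invariants of $SO_{2m+1}$ for that of $(\bZ/p)^n$. The three ingredients are: (i) the identification of $Inv^*$ with the stable motivic quotient, (ii) the Orlov--Vishik--Voevodsky sequence from Corollary 7.2 realising $H^{*,*}/(\tau)$ as a subgroup of unramified cohomology, and (iii) the Ekedahl approximation property that transports the computation from $BG$ to $X_G$.

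First, I would invoke the result of [Ga-Me-Se] (together with Theorem 6.3 of [Te-Ya]) cited just before the statement, which gives
\[ H^{*,*}(BSO_{2m+1};\bZ/2)/(\tau) \cong Inv^*(SO_{2m+1};\bZ/2) \cong \bZ/2\{1,w_2,\ldots,w_{2m}\}. \]
Second, since $X=X_G$ is an Ekedahl projective approximation for $BG\times \bP^\infty$, the structure map $g:X\to BG\times \bP^\infty$ induces an isomorphism on motivic cohomology in degrees below $N$. Choosing $N$ larger than $4m+1$, this transfers each class $w_{2i}$ from $H^{2i,2i}(BG;\bZ/2)/(\tau)$ to a nonzero class in $H^{2i,2i}(X;\bZ/2)/(\tau)$, giving the right-hand inclusion of the theorem.

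Third, Corollary 7.2 provides the injection
\[ H^{*,*}(X;\bZ/2)/(\tau)\ \hookrightarrow\ H^0_{Zar}(X;\mathcal{H}^*_{\bZ/2}), \]
and since $X$ is complete and smooth, the right-hand side coincides with $H^*_{ur}(X;\bZ/2)$ by Propositions 3.1 and 3.4 of [Me]. Chaining these two inclusions gives the double inclusion asserted in the theorem, and the consequence that $X_m$ and $X_{m'}$ are not retract rationally equivalent follows at once, since the $\bZ/2$-rank of the distinguished submodule jumps with $m$ and $H_{ur}^*$ is a retract birational invariant.

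The main obstacle is ensuring that the approximation property is genuinely compatible with the quotient by $\tau$, i.e.\ that the Stiefel--Whitney classes $w_{2i}$ descend to nonzero elements of $H^{*,*}(X;\bZ/2)/(\tau)$ and not merely to classes in $N^1H^*(X;\bZ/2)$. Equivalently, via the identification $H^{*,*}/(\tau)\cong H^*/N^1$ of Lemma 2.1, one must check that $g^*$ respects the coniveau filtration in the relevant range. This is the one place where the Ekedahl construction (rather than just the Betti comparison) is essential, but the argument is identical to the one already used in the proof of Theorem 7.4 for $(\bZ/p)^n$, since Serre's invariants $w_{2i}$ are detected by restriction to a split maximal elementary abelian $2$-subgroup of $SO_{2m+1}$.
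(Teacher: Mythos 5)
Your proposal follows exactly the route the paper takes: cite the Garibaldi--Merkurjev--Serre/Tezuka--Yagita identification $Inv^*(SO_{2m+1};\bZ/2)\cong H^{*,*}(BSO_{2m+1};\bZ/2)/(\tau)\cong\bZ/2\{1,w_2,\ldots,w_{2m}\}$, transport it to $X$ via the Ekedahl approximation, and embed $H^{*,*}(X;\bZ/2)/(\tau)$ into $H^*_{ur}(X;\bZ/2)$ through Corollary 7.2. The paper's own argument is just this one-line deduction (it gives no further detail), so your additional care about the classes surviving the $\tau$-quotient, handled by detection on elementary abelian subgroups as in Lemma 7.3, is a faithful and slightly more explicit rendering of the same proof.
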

We also have (Theorem 5.2) \ \ 
$ DH^*(X)/2\supset \bZ/2\{Q_0(w_2),...,Q_0(w_{2m})\}.$
Hence $X_{G_m}$ and $X_{G_{m'}}$ are not retract rational
if $m\not =m'$.

From Theorem 5.2 and the preceding theorem, we have 
\begin{cor}  
Let $G_n'=SO_{n}$ and $X=X_{G}$.  Then
 $X_{G_n}$ and $X_{G_{n'}}$ are not retract rational
if $n\not =n'$.
\end{cor}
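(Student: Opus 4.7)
The plan is to adapt Theorems 5.2 and 7.5 (both stated for $G=SO_{2m+1}$) to the even case $G=SO_{2m}$, and then to distinguish the approximations $X_{SO_n}$ for distinct $n$ by combining both invariants.

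First, I would note that Serre's computation gives $Inv^*(SO_n;\bZ/2)\supset \bZ/2\{1,w_2,w_4,\ldots,w_{2\lfloor n/2\rfloor}\}$ uniformly in $n$, not only for odd $n$. Via Totaro's isomorphism $Inv^*(G;\bZ/p)\cong H^0_{Zar}(BG;\mathcal{H}^*_{\bZ/p})$ together with the approximation map $g:X_{SO_n}\to BSO_n\times\bP^\infty$, the same reasoning as in Theorem 7.5 yields
\[ H^*_{ur}(X_{SO_n};\bZ/2)\supset H^{*,*}(X_{SO_n};\bZ/2)/(\tau)\supset \bZ/2\{1,w_2,w_4,\ldots,w_{2\lfloor n/2\rfloor}\}. \]

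Next, I would run the proof of Theorem 5.2 with $SO_{2m}$ in place of $SO_{2m+1}$. The mod $2$ cohomology $H^*(BSO_{2m};\bZ/2)\cong \bZ/2[w_2,\ldots,w_{2m}]$ contains all odd Stiefel-Whitney classes $w_{2i+1}=Q_0(w_{2i})$ for $1\le i\le m-1$, and these lie in $N^1H^*(X_{SO_{2m}})$ as $2$-torsion classes by Lemma 2.4. The iterated Milnor-operation computation that powered Theorem 5.2 applies verbatim, giving
\[ DH^*(X_{SO_{2m}})/2\supset \bZ/2\{w_3,w_5,\ldots,w_{2m-1}\}. \]

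For $n\neq n'$ I would now distinguish. When $\lfloor n/2\rfloor\neq\lfloor n'/2\rfloor$, the two $H^*_{ur}$-subspaces above have different $\bZ/2$-ranks ($\lfloor n/2\rfloor+1$ versus $\lfloor n'/2\rfloor+1$), so $X_{SO_n}$ and $X_{SO_{n'}}$ cannot be retract birationally equivalent since $H^*_{ur}(-;\bZ/2)$ is a retract birational invariant. The remaining case is $\{n,n'\}=\{2m,2m+1\}$: here the $H^*_{ur}$-ranks coincide, but the $DH^*$-ranks are $m-1$ versus $m$ (reflecting that $w_{2m+1}$ exists in $H^*(BSO_{2m+1};\bZ/2)$ but not in $H^*(BSO_{2m};\bZ/2)$), and this is the distinction supplied by Theorem 5.2 that is signalled by the phrase \emph{``From Theorem 5.2 and the preceding theorem.''}

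The main obstacle is exactly this final $SO_{2m}$ versus $SO_{2m+1}$ case: $H^*_{ur}$ at the level of $Inv^*$ fails to separate them, while $DH^*$ is a priori only a stable birational invariant rather than a retract one. To make the argument precise one either appeals to the stable-versus-retract hierarchy (so that the $DH^*$-distinction still rules out the desired non-equivalence), or mines $H^0_{Zar}(X_{SO_{2m+1}};\mathcal{H}^*_{\bZ/2})$ beyond the image of $Inv^*$ to exhibit a further $H^*_{ur}$-class absent for $X_{SO_{2m}}$. The Remark after Theorem 7.4 cautions precisely that the approximation-dependent $H^0_{Zar}$ can be strictly larger than $Inv^*$, which is the leverage I would try to exploit to finish rigorously.
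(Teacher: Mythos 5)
Your argument diverges from the paper's precisely at the even orthogonal groups, and both points of divergence are genuine gaps. First, the input you use for the even case is incorrect: by Serre's computation quoted in the paper's own proof, $Inv^*(BSO_{2m};\bZ/2)\cong \bZ/2\{1,w_2,\dots,w_{2m-2},u_{2m-1}\}$ with an exotic class $u_{2m-1}$ in odd degree $2m-1$; in particular $w_{2m}$ is \emph{not} a nonzero invariant of $SO_{2m}$, so your ``uniform'' lower bound $\bZ/2\{1,w_2,\dots,w_{2\lfloor n/2\rfloor}\}$ fails for even $n$. This already undermines your rank count in the case $\{n,n'\}=\{2m-1,2m\}$: with the correct invariants the distinction between these two groups is carried entirely by $u_{2m-1}$, a class your argument never sees.

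Second, for the case $\{n,n'\}=\{2m,2m+1\}$ you fall back on $DH^*$, but $DH^*$ is only a \emph{stable} birational invariant, and the hierarchy runs the wrong way for your purpose: stable rationality implies retract rationality, so two varieties can fail to be stably birationally equivalent while still being retract birationally equivalent, and a $DH^*$-distinction therefore proves nothing about retract equivalence. You acknowledge this difficulty, but neither of your proposed repairs is carried out. The paper's proof avoids both problems at once and uses only unramified cohomology (which \emph{is} a retract birational invariant): the top degree of a nonzero cohomological invariant is $2m-2$ for $SO_{2m-1}$, is $2m-1$ (via $u_{2m-1}$) for $SO_{2m}$, and is $2m$ (via $w_{2m}$) for $SO_{2m+1}$, so every consecutive pair --- hence every pair --- of the $X_{SO_n}$ is separated. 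The fix for your write-up is to replace the even-case invariant computation by the one containing $u_{2m-1}$ and to drop the appeal to $DH^*$ entirely.
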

\begin{proof}
By Serre  \cite{Ga-Me-Se}, we know
\[ Inv^*(BSO_{2m};\bZ/2)\cong \bZ/2\{1,w_2,...,w_{2m-2},u_{2m-1}\}  \]
with $ |u_{2m-1}|=2m-1$.
We see $X_{2m}$ and $X_{2m+1}$ are not
retract rational since $w_{2m+1}$ is zero in the invariant for
$BSO_{2m}$.  
We see $X_{2m-1}$ and $X_{2m}$ are not
retract rational since $u_{2m}$ is zero in the invariant for
$BSO_{2m-1}$.
\end{proof}

  { \bf Remark.}    Kordonskii [Ko], Merkurjev (Corollary 5.8 in [Me]),
and Reichstein-Scavia show [Re-Sc]
that  $BSpin_n$ itself is stably rational when $n\le 14$.
 These facts imply that  
the (Ekedahl) approximation $X$  is not stable rationally equivalent to 
$BG$.
(The author thanks Federico Scavia who pointed out this remark.)

At last of this section, we consider the case $G=PGL_p$
projective general linear group.
We have (for example Theorem 1.5,1.7 in \cite{Ka-Ya})  
additively \[H^*(BG;\bZ/p)\cong 
M\oplus N \quad with \ \ 
 M\stackrel{add.}{\cong}\bZ/p[x_4,x_6,...,x_{2p}],\quad \]
\[ N= SD\otimes \Lambda(Q_0,Q_1)\{u_2\}\quad with\ \ 
SD=\bZ/p[x_{2p+2},x_{2p^2-2p}]\]
where $x_{2p+2}=Q_1Q_0u_2$ and suffix means its degree.  The Chow ring is given as 
\[ CH^*(BG)/p\cong M\oplus SD\{Q_0Q_1(u_2)\}.\]

From Lemma 7.3, we have :

\begin{thm}  Let $p$ be odd.
For an approximation $X$ for $BPGL_p$, we see 
\[ DH^*(X)/p\supset  \bZ/p\{Q_0u_2\} ,\]
\[ H_{un}^*(X;\bZ/p)\supset  \bZ/p\{1,u_2\},
\qquad Inv^*(G;\bZ/p)\supset  \bZ/p\{1,u_2\}.\]
\end{thm}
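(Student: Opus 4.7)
The strategy is to invoke Lemma 7.3 with $m=2$ applied to the element $x = u_2 \in H^2(BPGL_p;\bZ/p)$. Once its hypotheses are verified for $u_2$, the lemma immediately yields $Inv^*(G;\bZ/p) \supset \bZ/p\{u_2\}$, $H_{un}^*(X;\bZ/p) \supset \bZ/p\{u_2\}$, and $DH^*(X)/p \supset \bZ/p\{Q_0 u_2\}$, while the element $1$ in $Inv^*$ and $H_{un}^*$ is the trivial invariant coming from $H^*(Spec(k);\bZ/p)$. Thus the proof reduces to exhibiting an elementary abelian subgroup $j\colon A_2 = (\bZ/p)^2 \hookrightarrow PGL_p$ with $j^*(u_2) \neq 0$ in $\Lambda(x_1,x_2) \cong H^*(BA_2;\bZ/p)/N^1$, which simultaneously certifies that $u_2$ is nonzero in the stable cohomology $H^2(BG;\bZ/p)/N^1$.

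To build $j$, I would fix a primitive $p$-th root of unity $\zeta$ and take the images in $PGL_p$ of the diagonal matrix $\mathrm{diag}(1,\zeta,\ldots,\zeta^{p-1}) \in SL_p$ and the cyclic shift matrix. These two order-$p$ elements of $SL_p$ commute modulo the center $\mu_p$, so they define an embedding $A_2 \hookrightarrow PGL_p$ (the mod-$p$ Heisenberg). The class $u_2$ is the Brauer/transgression class of the central extension $1 \to \mu_p \to SL_p \to PGL_p \to 1$, and a standard Lyndon--Hochschild--Serre computation identifies $j^*(u_2)$, up to a nonzero scalar, with the cup-product class $x_1 x_2$. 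Since $x_1 x_2$ is nonzero in $\Lambda(x_1,x_2)$, both hypotheses of Lemma 7.3 are satisfied: $u_2$ is detected by $A_2$, and $u_2 \notin N^1 H^2(BG;\bZ/p)$ because its restriction would otherwise land in the ideal $(y_1,y_2) = N^1 H^2(BA_2;\bZ/p)$.

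For the $DH^*$ claim it is also useful to check the $Q_i$-nontriviality directly through $A_2$: one computes $j^*(Q_0 u_2) = y_1 x_2 - x_1 y_2$ and $j^*(Q_1 Q_0 u_2) = y_1^p x_2 - x_1 y_2^p \neq 0$, which is consistent with the relation $Q_1 Q_0 u_2 = x_{2p+2}$ recorded in the decomposition of $H^*(BG;\bZ/p)$ above Theorem 7.6. This is precisely what Lemma 3.4 (invoked inside the proof of Lemma 7.3) needs in order to rule out $Q_0 u_2$ from $\tilde N^1 H^3(X)$.

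The main obstacle is the identification $j^*(u_2) = x_1 x_2$ for the Heisenberg embedding. This is a classical calculation (see for instance Vistoli's work on $BPGL_p$, or the $p$-local computations in \cite{Ka-Ya}), but it is the one piece of genuine content; once granted, Theorem 7.6 follows by direct application of the framework developed in Sections 3 and 7.
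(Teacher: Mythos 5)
Your proposal matches the paper's argument: Theorem 7.6 is deduced there by a bare appeal to Lemma 7.3, and your verification that $u_2$ is detected by the non-toral $(\bZ/p)^2\subset PGL_p$ (diagonal and shift matrices) with $j^*(u_2)=x_1x_2$ is exactly the hypothesis check the paper leaves implicit, the needed $Q_1$-nonvanishing being supplied by $x_{2p+2}=Q_1Q_0u_2$. One small slip: $j^*(Q_1Q_0u_2)$ should be $y_1y_2^p-y_1^py_2$ (of degree $2p+2$), not $y_1^px_2-x_1y_2^p$, but this does not affect the argument.
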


In the above case, we do  not see here that $DH^*(X)$ for $*<N$
is invariant
of $BG$, (under taking another $X'$  
as  approximations for $G$).

\section{Retract rational for simply connected $G$}

We will see that simply connected groups $G$ satisfy the  similar
facts, but   such as 
$Ker(\tau| H^{*+1,*-1}(X;\bZ/p))\not =0$  in  Lemma 7.1.
In $\S 6$, we see there is $0\not= w\in H^{4}(X)$
such that $DH^4(X)/p\supset \bZ/p\{w\}$.
We will see that this $w$ corresponds a nonzero element
in $ H_{un}^3(X;\bZ/p)$.
\begin{thm} (\cite{YaRo}) Let $G$ be a simply connected simple group.
Then there is the element (Rost invariant) such that  
\[H^3_{ur}(X;\bZ/p)\twoheadrightarrow
  Ker(\tau|H^{4,2}(X;\bZ/2)) \supset \bZ/p\{w\}
. \]
Hence $X$ is not retract rational.
\end{thm}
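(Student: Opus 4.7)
The plan is to apply Corollary 7.2 with $*=3$, which yields the short exact sequence
\[
0 \to H^{3,3}(X;\bZ/p)/(\tau) \to H^0_{Zar}(X;\mathcal{H}^3_{\bZ/p}) \to \Ker\bigl(\tau : H^{4,2}(X;\bZ/p) \to H^{4,3}(X;\bZ/p)\bigr) \to 0,
\]
whose middle term equals $H^3_{ur}(X;\bZ/p)$ since $X$ is smooth projective. This immediately furnishes the surjection in the statement. What remains is to produce a nonzero class $w$ in $\Ker(\tau|H^{4,2})$.

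I would produce $w$ as the image under the rightmost map above of the Rost invariant $r_G \in H^3_{ur}(X;\bZ/p)$, which exists for every simply connected simple $G$ with $p$-torsion in $H^*(BG)$ (see \cite{Ga-Me-Se} and, for the motivic/approximation version, \cite{YaRo}). By exactness any such image lies in $\Ker(\tau|H^{4,2})$ automatically, so the theorem reduces to showing that $r_G$ is nonzero modulo the subgroup $H^{3,3}(X;\bZ/p)/(\tau) = H^3(X;\bZ/p)/N^1$, equivalently that $w\neq 0$ in $CH^2(X)/p \cong H^{4,2}(X;\bZ/p)$.

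For nonvanishing I would use the subgroup detection from Proposition 6.2: there is $j : A \hookrightarrow G$ (with $A = (\bZ/p)^3 \subset F_4$ or $E_8 \subset G$ for $p$ odd, $A = G_2 \subset G$ for $p = 2$) and a class $\tilde w \in H^4(X)$ with $p\tilde w$ a Chern class and $j^*(\tilde w) = Q_0(x_1x_2x_3)$ (respectively $w_4$). The key identification, supplied by \cite{YaRo}, is that the connecting image of $j^*(r_G)$ in $CH^2(BA)/p$ corresponds under the motivic Bockstein to $\tilde w$; since $Q_1(\tilde w) \neq 0$ in $H^*(X;\bZ/p)$ (the crucial input of Proposition 6.2), $\tilde w \bmod p$ is nonzero, forcing $j^*(w) \neq 0$ and hence $w \neq 0$. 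The main obstacle is precisely this last identification -- matching the abstract connecting image of $r_G$ with the explicit torsion class $\tilde w$ on the detecting subgroup $A$ -- which is the content of \cite{YaRo}; granting this, ``$X$ is not retract rational'' follows from the retract-rational invariance of $H^3_{ur}$.
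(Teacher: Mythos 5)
Your first step (Corollary 7.2 at $*=3$ together with smooth projectivity of $X$) matches the paper and does give the surjection. The gap is in your nonvanishing argument. The class that must be shown nonzero lives in $Ker(\tau\colon H^{4,2}(X;\bZ/p)\to H^{4,3}(X;\bZ/p))$, so by construction its image under $\tau^{2}$ in $H^{4,4}(X;\bZ/p)=H^{4}(X;\bZ/p)$ vanishes. Hence the criterion you invoke --- $Q_{1}(\tilde w)\neq 0$ in $H^{*}(X;\bZ/p)$, the input to Proposition 6.2 --- cannot detect it: that criterion sees classes in topological $H^{4}$, exactly where your class dies. ($Q_1\tilde w\neq 0$ is the right tool for $DH^4(X)/p\neq 0$, i.e.\ for excluding $\tilde w$ from the strong coniveau filtration; it says nothing about nonvanishing in $H^{4,2}=CH^{2}(X)/p$.) A second problem is the passage from $BG$ to $X$: the approximation only guarantees that $j^{*}$ is an isomorphism on $H^{*,*}$ (the \'etale/Betti diagonal), not on $H^{4,2}$, so even granting that the connecting image of the Rost invariant is nonzero on $BG$ you cannot conclude it survives pullback to $X$. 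The paper itself flags this failure mode in Section 8 for $G=F_4$, $p=2$ and the invariant $f_5$, where the analogous nonvanishing in $H^{6,4}(X;\bZ/2)$ can only be stated conditionally.

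The paper's mechanism is different and avoids both issues. It produces a Chern class $c_2'\in H^{4,2}(BG;\bZ/p^{2})$ with $(\tau')^{2}c_2'=pw$, pulls it back to $c''=j^{*}c_2'$, and uses the approximation isomorphism $H^{4,4}(BG;\bZ/p^{2})\cong H^{4,4}(X;\bZ/p^{2})$ to conclude $(\tau')^{2}c''=pw\neq 0$, hence $c''\neq 0$ in $H^{4,2}(X;\bZ/p^{2})$. Injectivity of $\tau$ on $H^{4,3}(X;\bZ/p)$ (Orlov--Vishik--Voevodsky) places the mod-$p$ reduction $c'''$ in $Ker(\tau)$, and $c'''\neq 0$ because otherwise $c''=px$ would make $w=(\tau')^{2}x$ algebraic, contradicting that $pw$ is a Chern class while $w$ is not in the image of the cycle map. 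So the detection happens with $\bZ/p^{2}$ coefficients via the algebraicity of $pw$ versus the non-algebraicity of $w$, not via Milnor operations in topological cohomology; you would need to replace your last step by an argument of this kind.
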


\begin{proof}

We consider the following diagram
\[ \begin{CD}
H_{ur}^*(BG;\bZ/p) @>j^*>> H^*_{ur}(X;\bZ/p) \\
@V(1)VV   @V{(2)\ \cong}VV  \\ 
    Inv^*(G:\bZ/p)\cong H^{0}(BG;\mathcal{H}_{\bZ/p}^*)
@>j^*>>
H^{0}(X; \mathcal{H}_{\bZ/p}^*)\twoheadrightarrow
 Ker(\tau)
.\end{CD}\]

Here $H_{un}^+(BG;\bZ/p)=0$
when $BG$ is retract rational.  
(The map (1) need not isomorphism.)
We see that 
the map 
$ (2) :H_{ur}^*(X;\bZ/p)
\cong 
H^{0}(X;\mathcal{H}^*_{\bZ/p})$ 
because $X$ is projective and smooth. 
%(Note this does not hold $X=BG$.)
Recall Lemma 7.1 that we have the surjection 
\[ H^0(X;\mathcal{H}^*_{\bZ/p})
\twoheadrightarrow
 Ker(\tau|H^{*+1,*-1}(X:\bZ/p)).\]

Hereafter, we consider the case $*=3$.
We consider the following commutative diagram.
\[ \begin{CD}
pw\in H^{4,4}(BG;\bZ/p^2) @>{j^*\ \cong}>>
pw\in H^{4,4}(X;\bZ/p^2)  @>>> 0\in H^{4.4}(X;\bZ/p)\\
@A{\tau'}AA@A{\tau'}AA  @A{\tau \ (inj.)}AA\\
c_2' \in H^{4,3}(BG;\bZ/p^2) @>{j^*}>>
c_2''\in H^{4,3}(X;\bZ/p^2) @>>> 0\in H^{4,3}(X;\bZ/p)  \\
@A{\tau' }AA@A{\tau'}AA @A{\tau}AA
\\ c_2'\in H^{4,2}(BG;\bZ/p^2) @>{j^*}>>
c_2''\in H^{4,2}(X;\bZ/p^2) @>{r}>> c'''\in H^{4.2}(X:\bZ/p)\\
\end{CD}
\]

From the proof in Proposition 6.2,  we see that there is
$ c_2'\in H^{4,2}(BG;\bZ/p^2)$
so that (for 
$\tau':
H^{*,*'}(X;\bZ/p^2)
\to H^{*,*'+1}(X;\bZ/p^2))$ we have 
\[ (\tau')^2c_2'=pw\in H^{4,4}(BG;\bZ/p^2).\]
(In fact $pw$ is represented by a Chern class,
but $w$ itself is not in the image of the cycle map.)

Next take $c''=j^*c_2'\in H^{4,3}(X;\bZ/p^2)$.  Since
$j$ is a projective approximation, we have 
\[H^{4,4}(BG;\bZ/p^2)
\cong H^{4,4}(X;\bZ/p^2).\]  Here 
$(\tau')^2c''=pw$.  Hence  $c''\not=0\in
H^{4,2}(X;\bZ/p^2)$.

Let us write by $c'''$  the image of $c''$ in  $ H^{4,2}(X;\bZ/p)$.  
We note $c'''\in Ker(\tau)|H^{4,2}(X;\bZ/p),$
because $\tau:H^{4,3}(X;\bZ/p)\to H^{4,4}(X;\bZ/p)$
is injective from [Or-Vi-Vo].

Moreover, $c'''$ is a module generator in 
$Ker(\tau)$, in fact if $c''=px$, then $\tau^2x=w$
which is not $Ker(\tau)$.

Hence there is $a\in H_{ur}^3(X;\bZ/p)$ which corresponds
$c'''\in Ker(\tau|H^{4,2}(X;\bZ/p).$
\end{proof}

\begin{cor} Let $G$ be a simply connected group 
having $p$-torsion in $H^*(BG)$,
and $X=X_G$ be a projective approximation for $BG$.
Then $H^3_{ur}(X;\bZ/p)\not =0$ and so $X$ is not
retract rational.
\end{cor}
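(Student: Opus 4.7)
The plan is to reduce the general simply connected case to the simple case already handled in Theorem 8.1, and then to run the same motivic diagram argument on $X = X_G$.

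First, I would decompose $G = G_1 \times \cdots \times G_k$ as a product of simply connected simple algebraic groups. By K\"unneth, $H^*(BG) \cong \bigotimes_i H^*(BG_i)$, so the $p$-torsion hypothesis forces at least one factor, say $G_1$, to have $p$-torsion in $H^*(BG_1)$. The projection $G \twoheadrightarrow G_1$ induces a retraction $BG \to BG_1$, so $H^4(BG_1)$ is a direct summand of $H^4(BG)$. Consequently, the element $w \in H^4(BG_1)$ produced by Proposition 6.2 pulls back to $w \in H^4(BG) \cong H^4(X)$ (in low degrees) retaining the two key features: $pw$ is represented by a Chern class, and $Q_1(w) \ne 0$ in $H^*(BG;\bZ/p)$.

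Next I would transport the proof of Theorem 8.1 verbatim. The Chern class representing $pw$ lifts to a class $c_2' \in H^{4,2}(BG;\bZ/p^2)$ with $(\tau')^2 c_2' = pw$; its restriction $j^* c_2' = c''$ to $X$ reduces mod $p$ to $c''' \in H^{4,2}(X;\bZ/p)$. The injectivity of $\tau : H^{4,3}(X;\bZ/p) \to H^{4,4}(X;\bZ/p)$ coming from Orlov--Vishik--Voevodsky gives $c''' \in \Ker(\tau | H^{4,2}(X;\bZ/p))$. Moreover $c''' \ne 0$: if it were zero, then $c'' = p x$ for some $x \in H^{4,2}(X;\bZ/p^2)$, so $(\tau')^2 x$ would represent $w$ at bidegree $(4,4)$, forcing $w$ to be in the image of the mod-$p$ cycle map and hence in $\tilde N^1 H^4(X)$, contradicting $w \ne 0$ in $DH^4(X)/p$ from Proposition 6.2. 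Finally, Corollary 7.2 supplies the surjection $H^0_{Zar}(X;\mathcal{H}^3_{\bZ/p}) \twoheadrightarrow \Ker(\tau | H^{4,2}(X;\bZ/p))$, so the nontrivial class $c'''$ lifts to a nonzero element of $H^3_{ur}(X;\bZ/p) \cong H^0_{Zar}(X;\mathcal{H}^3_{\bZ/p})$. Since unramified cohomology is a retract-rationality invariant (Lemma 3.1, 3.4 of \cite{Me}), $X$ is not retract rational.

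The main obstacle is the nontriviality of $c''' \in H^{4,2}(X;\bZ/p)$ after all the modular reductions. This is precisely where one needs $w \notin \tilde N^1 H^4(X)$, and it is the single nontrivial input that must be carried over from Proposition 6.2 to the product case; the rest of the argument is a mechanical transport of the motivic diagram in the proof of Theorem 8.1, so once the simple factor $G_1$ is identified and $w$ pulled back, the conclusion follows.
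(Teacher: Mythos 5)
Your proposal follows the paper's own route: the paper reduces the general simply connected case to a simple factor carrying the $p$-torsion exactly as you do (this reduction is made explicit in the proof of Proposition 6.2), and Corollary 8.2 is then an immediate application of the diagram argument of Theorem 8.1, which you reproduce faithfully, including the two key inputs that $pw$ is a Chern class and $Q_1w\neq 0$. The only cosmetic difference is your justification that $c'''\neq 0$: the paper argues that $c''=px$ would force $w$ to be algebraic (excluded since $w$ is not in the image of the cycle map, equivalently since $Q_1w\neq 0$ while cycle classes are killed by $Q_1$), whereas you route the same contradiction through $w\neq 0$ in $DH^4(X)/p$ from Proposition 6.2 --- equivalent in substance, since that nonvanishing is itself proved from $Q_1w\neq 0$.
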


In the last of this section,  we consider the case $G=F_4$,
$p=2$ 
the exceptional simple group of rank $4$.  By  \cite{Ga-Me-Se}, the cohomology invariant is known
\[ Inv^*(G;\bZ/2)\cong \bZ/2\{1,u_3,f_5\} \quad |u_3|=3,\ |f_5|=5.\]
Since $H^{5,5}(BG;\bZ/2)=0$,  we  know
$f_5$ corresponds
\[ 0\not =x\in Ker(\tau|H^{6,4}(BG;\bZ/2))\twoheadleftarrow
H^0(BG;\mathcal{H}_{\bZ/2}^5).\]
But we can $not$ say here that $0\not=x\in H^{6,4}(X;\bZ/2)$.
\begin{prop}
If there is an approximation such that $H^{6,4}(BG;\bZ/2)
\cong H^{6,4}(X;\bZ/2)$, then 
\[ H_{ur}^*(X;\bZ/2)\supset \bZ/2\{u_3,f_5\}.\]
Hence if  the assumption is correct. then $X_{G_2}$ and $X_{F_4}$ are not retract rational
equivalent.
\end{prop}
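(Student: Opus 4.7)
The plan is to produce both $u_3$ and $f_5$ as explicit classes in $H^*_{ur}(X;\bZ/2)$ for $X = X_{F_4}$, and then distinguish $X$ from $X_{G_2}$ through unramified cohomology.

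First, for $u_3 \in H^3_{ur}(X;\bZ/2)$, I would apply Theorem 8.1 directly to $F_4$, which is a simply connected simple group with $2$-torsion in $H^*(BF_4)$; this produces the Rost-invariant class as an element of $Ker(\tau|H^{4,2}(X;\bZ/2))$ and lifts it to $H^3_{ur}(X;\bZ/2)$ via Lemma 7.1. No new input is required for this step.

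Next, for $f_5$, I would begin with the nonzero element $x \in Ker(\tau|H^{6,4}(BG;\bZ/2))$ corresponding to $f_5 \in Inv^5(G;\bZ/2) \cong H^0(BG;\mathcal{H}^5_{\bZ/2})$; this correspondence is precisely the one recorded in the paragraph preceding the statement, forced by $H^{5,5}(BG;\bZ/2)=0$ via Corollary 7.2. Using the assumed isomorphism $j^*\colon H^{6,4}(BG;\bZ/2) \cong H^{6,4}(X;\bZ/2)$, the image $j^*(x)$ is nonzero. Naturality of multiplication by $\tau$ under $j^*$ gives $\tau(j^*x) = j^*(\tau x) = 0$, so $j^*x \in Ker(\tau|H^{6,4}(X;\bZ/2))$. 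Applying Corollary 7.2 to $X$ itself, I would lift $j^*x$ through the surjection
\[ H^5_{ur}(X;\bZ/2) = H^0_{Zar}(X;\mathcal{H}^5_{\bZ/2}) \twoheadrightarrow Ker(\tau|H^{6,4}(X;\bZ/2)) \]
to obtain the desired unramified class, which I identify with $f_5$; together with $u_3$ from the first step, this establishes the inclusion $H^*_{ur}(X;\bZ/2) \supset \bZ/2\{u_3,f_5\}$.

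For the non-equivalence with $X_{G_2}$, I would invoke that $H^*_{ur}(-;\bZ/2)$ is a retract rational birational invariant, so a retract rational equivalence between $X_{G_2}$ and $X_{F_4}$ would force an isomorphism of their unramified cohomologies. I would then highlight $f_5$ as the distinguishing element, using that $Inv^5(G_2;\bZ/2) = 0$ so no degree-$5$ contribution comes from the stable-cohomology summand $H^{5,5}(X_{G_2};\bZ/2)/(\tau)$, and that the explicit structure $H^*(BG_2;\bZ/2) \cong \bZ/2[w_4,w_6,w_7]$ (together with $Q_0 w_6 = w_7$) should prevent the $Ker(\tau)$ summand at the appropriate bidegree from producing a matching class.

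The hard step is precisely this last comparison. While $Inv^5(G_2;\bZ/2)=0$ rules out one source of a degree-$5$ unramified class on $X_{G_2}$, one must verify that the $Ker(\tau|H^{6,4}(X_{G_2};\bZ/2))$ summand of $H^5_{ur}(X_{G_2};\bZ/2)$ does not produce an ad hoc class that could correspond to $f_5$ under a retract equivalence. Resolving this cleanly requires an explicit computation of $H^{6,4}(BG_2;\bZ/2)$ and its $\tau$-kernel from the known mod-$2$ motivic structure of $BG_2$, and this is where the main technical difficulty lies.
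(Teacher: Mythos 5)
Your proposal is correct and takes essentially the same route as the paper: the paper states this proposition without a formal proof, the justification being exactly the preceding paragraph (the identification of $f_5$ with $0\neq x\in Ker(\tau|H^{6,4}(BG;\bZ/2))$ via $H^{5,5}(BG;\bZ/2)=0$ and Corollary 7.2, transported to $X$ by the assumed isomorphism and naturality of $\tau$) together with Theorem 8.1 for $u_3$. The difficulty you rightly flag in the last step --- giving an upper bound on $H^5_{ur}(X_{G_2};\bZ/2)$, in particular controlling $Ker(\tau|H^{6,4}(X_{G_2};\bZ/2))$, so that no class can match $f_5$ under a retract equivalence --- is likewise left unaddressed in the paper, which simply asserts the non-equivalence.
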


\section{extraspecial $p$-groups}

We assume at first that $p$ is an odd prime. 
The extraspecial $p$-group $E(n)=p_+^{1+2n}$ is the group such that exponent is $p$,
its center is $C\cong \bZ/p$ and there is the extension
\[ 0\to C\to E(n)\stackrel{\pi}{\to} V_n\to 0\]
\noindent  with   $V=\oplus ^{2n}\bZ/p$. 
(For details of the cohomology of $E(n)$ see [Te-Ya1].) 
We can take generators  $a_1,...,a_{2n},c\in E(n)$ such that $\pi(a_1),..
, \pi(a_{2n})$ (resp. $c$ ) make a base of $V_n$ (resp. $C$) such that commutators are 
\[  \quad [a_{2i-1},a_{2i}]=c  \quad and \quad [a_{2i-1},a_j]=1 
\ \ if\ j\not = 2i. \]
We note that $E(n)$ is also the central product of the $n$-copies of $E(1)$
\[E_n\cong E(1)\cdot \cdot \cdot E(1)=E(1)\times_{\la c\ra}E(1)...\times_{\la c\ra} E(1).\]

Take cohomologies
\[
H^*(BC;\bZ/p)\cong \bZ/p[u]\otimes \Lambda(z), \quad   \beta z=u,
\]
\[
H^*(BV_n;\bZ/p)\cong \bZ/p[y_1,...,y_{2n}]\otimes \Lambda (x_1,...x_{2n}),
  \quad \beta x_i=y_i,\]
identifying the dual of $a_i$ (resp.$c$) with $x_i$ (resp. 
$z$).  That means
\[ H^1(E(n);\bZ/p)\cong Hom(E(n);\bZ/p)\ni x_i :a_j\mapsto \delta_{ij}.\]
The central extension  is expressed by 
\[
f=\sum _{i=1}^nx_{2i-1}x_{2i}\in H^2(BV_n;\bZ/p).
\]
Hence  $\pi ^*f=0$ in $H^2(BE(n);\bZ/p)$.
We consider the Hochshild-Serre spectral sequence
\[E_2^{*,*'}\cong H^*(BV_n;\bZ/p)\otimes H^*(BC;\bZ/p)\Longrightarrow
H^*(BE(n);\bZ/p).\]
Hence the first nonzero differential is $d_2(z)=f$ and the next differential is
\[ d_3(u)=d_3(Q_0(z))=Q_0(f)
 =\sum y_{2i-1}x_{2i}-y_{2i}x_{2i-1}.\]
In particular 
\[E_4^{0,*}\cong \bZ/p[y_1,...,y_{2n}]\otimes \Lambda (x_1,...x_{2n})/(f,Q_0(f)).\]

%Hence we know 
%\[w(j)=Q_jQ_0(f)=\sum_{i=1}^n y_{2i-1}^{p^j}y_{2i}-y_{2i-1}y^{p^j}_{2i}\]
%is also zero in $H^*(BE_n;\bZ/p)$.  The one of main theorems in [Te-Ya] is
%\begin{thm}([Te-Ya])
%The sequence $(w(1),...,w(n))$ is regular in $S_{2n}=\bZ/p[y_1,...,y_{2n}]$
%and
%\[S_{2n}/(w(1),....,w(n))\subset H^*(BE_n;\bZ/p).\]
%\end{thm}

\begin{lemma}  We have the inclusion
\[\Lambda(x_1,...,x_{2n})/(f)\subset H^*(BE(n);\bZ/p).\]
\end{lemma}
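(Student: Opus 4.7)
My plan is to realize the desired inclusion as the image of the inflation $\pi^*: H^*(BV_n;\bZ/p) \to H^*(BE(n);\bZ/p)$ restricted to the exterior subalgebra $\Lambda(x_1,\ldots,x_{2n}) \subset H^*(BV_n;\bZ/p)$, using the Hochschild–Serre spectral sequence already set up. First, each $x_i$ lies in $E_2^{1,0}$, and the differential $d_r: E_r^{1,0}\to E_r^{1+r,-r+1}$ vanishes for $r\ge 2$ by negativity of the target fiber degree; hence every $x_i$ is a permanent cycle and produces a well-defined class in $H^1(BE(n);\bZ/p)$. Since these classes have degree one they anticommute. Because $d_2(z) = f$, the class $f$ dies in $E_3^{2,0}$, giving $\pi^*(f) = 0$ in $H^2(BE(n);\bZ/p)$. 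Together these two observations produce a ring map $\Phi: \Lambda(x_1,\ldots,x_{2n})/(f) \to H^*(BE(n);\bZ/p)$.

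For injectivity of $\Phi$, I factor it through $E_\infty^{*,0}\hookrightarrow H^*(BE(n);\bZ/p)$, so it suffices to show $\ker\bigl(\Lambda(x_1,\ldots,x_{2n}) \to E_\infty^{*,0}\bigr) = (f)\cdot \Lambda(x_1,\ldots,x_{2n})$. From the computation $E_4^{*,0}\cong H^*(BV_n;\bZ/p)/(f,Q_0(f))$ established in the text, one checks that $(f,Q_0(f))\cap \Lambda(x_1,\ldots,x_{2n}) = (f)\cdot \Lambda(x_1,\ldots,x_{2n})$: any $\omega \in \Lambda$ written as $\alpha f + \beta Q_0(f)$ must have its $y$-free part equal to $\alpha_0 f$, where $\alpha_0$ is the $y$-free part of $\alpha$, because every monomial of $Q_0(f) = \sum(y_{2i-1}x_{2i} - y_{2i}x_{2i-1})$ involves a $y$-variable. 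Hence $\omega = \alpha_0 f \in (f)\cdot\Lambda$.

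The principal remaining step — and the main obstacle — is ruling out further differentials $d_r$ for $r\ge 4$ killing additional elements of the image of $\Lambda(x_1,\ldots,x_{2n})/(f)$ inside $E_4^{*,0}$. Elements of $E_r^{*,0}$ cannot be sources of such $d_r$ (targets have negative fiber degree), so I only need to show that every incoming differential $d_r: E_r^{*-r,r-1}\to E_r^{*,0}$ lands in the $y$-augmentation ideal of $E_4^{*,0}$, leaving the $y$-free part intact. This can be obtained by Kudo's transgression theorem applied to the fiber generators $z$ and $u = \beta(z)$ of $H^*(BC;\bZ/p)$: all higher-dimensional fiber classes are built from $u^k$ and $u^k z$, whose transgressions are multiples of $f$ and $Q_0(f)$ up to decomposable corrections, and since $Q_0(f)$ already lies in $(y_1,\ldots,y_{2n})$ while $f$ has been annihilated by $E_3$, every such image lies in the $y$-ideal of $E_4^{*,0}$. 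Chaining the three pieces gives $\Lambda(x_1,\ldots,x_{2n})/(f) \hookrightarrow E_\infty^{*,0} \hookrightarrow H^*(BE(n);\bZ/p)$, which is the desired inclusion. As a backup strategy, if the higher-differential bookkeeping becomes delicate, one could instead restrict to a maximal elementary abelian subgroup $A\subset E(n)$ of rank $n+1$ and exploit the known restriction map into $H^*(BA;\bZ/p)$ to detect the $y$-free exterior monomials directly.
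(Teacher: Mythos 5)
Your argument is essentially correct, but it takes a genuinely different route from the paper. You work entirely inside the Hochschild--Serre spectral sequence for $E(n)$ itself: you check that the $x_i$ are permanent cycles, that $f$ dies at $E_2$, that $(f,Q_0(f))\cap\Lambda(x_1,\dots,x_{2n})=(f)\Lambda$ by isolating the $y$-free part, and then you control all later differentials into the bottom row by Kudo's transgression theorem, using that the transgressions of $u^{p^k}$ are Steenrod operations applied to $Q_0(f)$ and hence lie in the ideal $(y_1,\dots,y_{2n})$, which is stable under the Steenrod algebra. This is sound, but it is exactly the higher-differential bookkeeping that the paper avoids: the paper instead introduces the auxiliary extension $0\to C\to E(n)'\to V_n'\to 0$ with $V_n'=\oplus^{2n}\bZ_p$ torsion-free, so that $H^*(BV_n';\bZ/p)\cong\Lambda(x_1,\dots,x_{2n})$ has no polynomial part, $d_3(u)=Q_0(f)=0$ there, the spectral sequence collapses at $E_4$, and one reads off $H^*(BE(n)';\bZ/p)\cong\bZ/p[u]\otimes\Lambda(x_1,\dots,x_{2n})/(f)$ completely; the inclusion for $E(n)$ then follows by pushing the exterior classes through $q^*$ for the quotient map $q\colon E(n)'\to E(n)$. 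The paper's comparison trick buys a complete and elementary computation with no appeal to Kudo; your direct approach is self-contained in one spectral sequence but carries the burden of justifying the behaviour of all higher transgressions, which you have only sketched. One caution about your backup plan: the maximal elementary abelian subgroups of $E(n)$ have rank $n+1$, so exterior monomials $x_{i_1}\cdots x_{i_k}$ with $k>n+1$ restrict to zero on each of them, and for odd $p$ the cohomology of $E(n)$ is not detected on elementary abelian subgroups in any case (Quillen's detection theorem quoted in the paper is for the $2$-group $D(n)$); so that fallback would not establish the full inclusion.
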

\begin{proof} We consider similar group $E(n)'$ such that
its center is $C\cong \bZ/p$ and there is the extension
\[ 
\begin{CD}
0 @>>> C@>i>> E(n)' @>{\pi}>> V_n' @>>> 0 \\
\end{CD}\]
but   $V_n'=\oplus ^{2n}\bZ_p$ such that there is the quotient map $q:E(n)'\to E(n)$.
We also consider the spectral sequence
\[E_2^{*,*'}\cong H^*(BV';\bZ/p)\otimes H^*(BC;\bZ/p)\Longrightarrow
H^*(BE(n)';\bZ/p).\] 
Here $H^*(BV_n';\bZ/p)\cong \Lambda(x_1,...x_{2n})$. The first nonzero differential
is $d_2(z)=f$ but the second differential is
\[d_3(u)=\sum y_{2i-1}x_{2i}-y_{2i}x_{2i-1}=0
\in \Lambda(x_1,...,x_{2n}).\]
Hence $E_4^{*,*'}$ is
(multiplicatively)  generated by $u$ and $x_i$ (permanent cycles).
So $E_4^{*,*'}\cong E_{\infty}^{*,*'}$.  Therefore we have
\[H^*(BE(n)';\bZ/p)\cong \bZ/p[u]\otimes\Lambda(x_1,...,x_{2n})/(f).\]
From the map $q^*: H^*(BE(n);\bZ/p)\to H^*(BE(n)';\bZ/p)$,
we get the result.
\end{proof}
However $H^*(BE(n);\bZ/p)/(N^1)\not \cong  \Lambda(x_1,...,x_{2n})/(f)$, in fact,
when $n=1$, from Theorem 3.3 in [Ya6]
we see
\begin{prop}  We have 
\[ H^*(BE(1) ;\bZ/p)/(N^1)
\cong \bZ/p\{1,x_1,x_2,a_1',a_2'\} \quad deg(a_i')=2.\]
\[ DH^*(X)/2\cong 
\bZ/2\{Q_0(a_1'),Q_0(a_2')\}.\]
\end{prop}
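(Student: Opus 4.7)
The plan is to combine the Hochschild--Serre spectral-sequence analysis already begun in this section with Lemma 2.3 (Chern classes are killed in $DH^*$), Lemma 2.4 (torsion classes lie in $N^1$), and the Milnor-operation test of Lemma 3.4.

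For the first isomorphism, I would read off a basis of $H^{\le 2}(BE(1);\bZ/p)$ from the spectral sequence set up just before the proposition: degree $0$ contributes $\{1\}$, degree $1$ contributes $\{x_1,x_2\}$, and degree $2$ has basis $\{y_1,y_2,a_1',a_2'\}$, where $y_i=Q_0(x_i)$ and the $a_i'$ are lifts of the permanent cycles $x_iz\in E_\infty^{1,1}$ (here $d_2(x_iz)=x_i\cdot f=x_i\cdot x_1x_2=0$ in $\Lambda(x_1,x_2)$, while $d_3(u)=Q_0(f)\ne 0$ removes $u$). The classes $y_1,y_2$ are Chern classes of one-dimensional representations of $E(1)$, so Lemma 2.3 kills them in $H^*/N^1$. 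The classes $x_1,x_2$ cannot lie in $N^1$ for degree reasons, and to prove both that the $a_i'$ survive modulo $N^1$ and that no higher-degree generator of $H^*(BE(1);\bZ/p)$ survives, I would invoke Theorem 3.3 of [Ya6], which is precisely the stable-cohomology calculation cited right before the proposition.

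For the second isomorphism the target sits in degree $3$. For each $i=1,2$ I must verify (i) $Q_0(a_i')\in N^1H^3(X;\bZ/p)$ and (ii) $Q_jQ_0(a_i')\ne 0$ for some $j\ge 1$, then apply Lemma 3.4. Claim (i) is automatic: $Q_0(a_i')$ is the mod-$p$ reduction of $\tilde\beta(a_i')$, a $p$-torsion class in $H^3(X;\bZ)$, which by Lemma 2.4 lies in $N^1$. For (ii), using $Q_jQ_0=-Q_0Q_j$ it suffices to show $Q_0Q_1(a_i')\ne 0$; I would compute $Q_1(a_i')$ by applying the derivation property to the spectral-sequence representative $x_iz$, obtaining a leading term $y_i^{\,p}z+x_iu^{p}$ whose further Bockstein is nonzero in the abutment. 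For the reverse inclusion -- that $DH^*(X)/p$ contains nothing further -- I would use the first half of the proposition together with Lemma 2.3: every generator of $H^*(BG;\bZ/p)$ is either a Chern class (hence zero in $DH^*$) or one of $x_i,a_i'$, and only the Bockstein shifts $Q_0(x_i)=y_i$ (Chern, so zero) and $Q_0(a_i')$ can contribute, all other products falling in the Chern ideal.

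The main obstacle will be the Milnor-operation step (ii): $a_i'$ is defined only up to the filtration in the Hochschild--Serre spectral sequence, and the differentials $d_2(z)=f$ and $d_3(u)=Q_0(f)$ interfere with a naive derivation calculation, so one has to track how $Q_1(a_i')$ is represented in $H^*(BE(1);\bZ/p)$ itself. The cleanest route is probably to extract the explicit $Q_j$-action from [Ya6] rather than recompute it by hand. A secondary subtlety is justifying the \emph{equality} (rather than mere inclusion) in $DH^*(X)/p$, which again rests on the completeness of Theorem 3.3 in [Ya6] -- i.e.\ that the stable cohomology is exhausted by the five listed classes and no higher-degree ghost appears.
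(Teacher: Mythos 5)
The paper does not actually prove this proposition: it is imported verbatim from Theorem 3.3 of [Ya6], the sentence before the statement ("from Theorem 3.3 in [Ya6] we see") being the entire justification. Your reconstruction is therefore doing more than the paper does, and it is consistent with how the paper argues in the parallel cases — the Hochschild--Serre reading of $H^{\le 2}(BE(1);\bZ/p)$ matches the setup at the start of \S 9, killing $y_1,y_2$ as Chern classes is Lemma 2.3/Lemma 3.2, putting $Q_0(a_i')$ into $N^1$ via integral torsion is exactly the Colliot-Th\'el\`ene--Voisin step (Lemma 2.4) used in the proof of Proposition 6.2, and the $Q_jQ_0\neq 0$ criterion is Lemma 3.4, just as in Theorem 4.4 and Proposition 9.4. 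You have also correctly located the two places where genuine input beyond the paper's lemmas is required: (a) the class $u$ does not survive the spectral sequence ($d_3(u)=Q_0(f)$), so your $E_2$-level formula $y_i^pz+x_iu^p$ for $Q_1(a_i')$ is not a legitimate representative and the nonvanishing of $Q_1Q_0(a_i')$ in the abutment must be taken from [Ya6]; and (b) the equality (as opposed to inclusion) in both displayed formulas requires knowing that $H^*(BE(1);\bZ/p)$ has no further generators surviving modulo $N^1$ and the Chern ideal, which is again precisely the content of [Ya6, Theorem 3.3]. Since the paper itself rests entirely on that citation, your proposal is as complete a proof as the paper offers, and more explicit. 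One small point worth flagging: the second display of the proposition is written with $\bZ/2$ and $DH^*(X)/2$, although \S 9 up to that point treats $p$ odd; this is evidently a typo for $\bZ/p$ and $DH^*(X)/p$, and your reading of the statement tacitly (and correctly) makes that substitution.
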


\begin{lemma}  Let $n\ge 2$.  Then 
\[y_i^py_j-y_iy_j^p\not =0 \quad \in \ H^*(BE(n);\bZ/p).\]
\end{lemma}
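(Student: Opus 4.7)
The approach is to detect $y_i^p y_j - y_i y_j^p$ by restricting to a carefully chosen elementary abelian $p$-subgroup $A \subset E(n)$. Recall that the elementary abelian subgroups $A$ of $E(n)$ containing the center $C = \la c \ra$ correspond, via $\pi$, to the isotropic subspaces $W \subset V_n$ of the symplectic commutator pairing (so $\la \pi(a_{2k-1}), \pi(a_{2k}) \ra = 1$ and all other basis pairings vanish). For $A = \pi^{-1}(W)$, the restriction of $y_l \in H^2(BE(n);\bZ/p)$ equals the Bockstein of $x_l|_{A/C}$; in particular $y_l|_A \neq 0$ precisely when $x_l$ restricts nontrivially to $W$.

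\textbf{Step 1: construct an isotropic plane detecting both $x_i$ and $x_j$.} Given distinct indices $i \neq j$, the goal is to produce a two-dimensional isotropic subspace $W \subset V_n$ on which $x_i|_W$ and $x_j|_W$ are linearly independent. If $\{i,j\}$ is not a commutator pair $\{2k-1,2k\}$, then $\pi(a_i)$ and $\pi(a_j)$ are already symplectically orthogonal, so $W := \la \pi(a_i), \pi(a_j) \ra$ is isotropic and $x_i|_W, x_j|_W$ are its dual coordinate functionals. If instead $\{i,j\} = \{2k-1, 2k\}$, I use the hypothesis $n \ge 2$ to pick a second index $l \neq k$ and set
\[
W := \la \pi(a_{2k-1}) + \pi(a_{2l-1}),\ \pi(a_{2k}) - \pi(a_{2l}) \ra;
\]
the symplectic pairing of the two generators equals $1 + (-1) = 0$, so $W$ is isotropic, while the evaluation of $x_{2k-1}$ and $x_{2k}$ against this basis yields the standard dual basis of $W^*$.

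\textbf{Step 2 and conclusion.} With $A := \pi^{-1}(W)$ elementary abelian of rank at least $3$, the restrictions $y_i|_A$ and $y_j|_A$ are $\bZ/p$-linearly independent elements of $H^2(BA;\bZ/p)$, hence algebraically independent in the polynomial part of $H^*(BA;\bZ/p)$. Since $\alpha^p\beta - \alpha\beta^p$ is manifestly nonzero in any polynomial ring $\bZ/p[\alpha,\beta]$ (its leading monomial does not cancel), the restriction $(y_i^p y_j - y_i y_j^p)|_A$ is nonzero, and the lemma follows by naturality. The main difficulty lies in the case $\{i,j\} = \{2k-1, 2k\}$, where $a_i$ and $a_j$ do not jointly lie in any abelian subgroup of $E(n)$; the hypothesis $n \ge 2$ is used precisely here to form the mixed isotropic plane above.
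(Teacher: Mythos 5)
Your proof is correct, and it takes a genuinely different route from the paper. The paper reduces via the inclusion $E(2)\subset E(n)$ to showing $y_1^py_2-y_1y_2^p\neq 0$ in $H^*(BE(2);\bZ/p)\otimes\bar\bF_p$, and then argues by contradiction: if this class (and, by symmetry, $y_3^py_4-y_3y_4^p$) vanished, the classes $Q_1Q_0(f)$ and $Q_2Q_0(f)$, which factor as sums of multiples of these expressions, could not form a regular sequence in $\bZ/p[y_1,y_2,y_3,y_4]$ as is known from [Te-Ya1]. Your argument instead detects the class directly on a maximal elementary abelian subgroup $A=\pi^{-1}(W)$ for an isotropic plane $W\subset V_n$, using that $y_i|_A=\beta(x_i|_A)$ and $y_j|_A$ become algebraically independent linear forms in the polynomial part of $H^*(BA;\bZ/p)$, where $\alpha^p\beta-\alpha\beta^p=\beta\prod_{\lambda\in\bF_p}(\alpha-\lambda\beta)$ is visibly nonzero. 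The key point you isolate correctly is the commutator-pair case $\{i,j\}=\{2k-1,2k\}$, where no abelian subgroup contains both $a_i$ and $a_j$; your diagonal plane $\la \pi(a_{2k-1})+\pi(a_{2l-1}),\ \pi(a_{2k})-\pi(a_{2l})\ra$ is isotropic and still separates $x_{2k-1}$ and $x_{2k}$, which is exactly where $n\ge 2$ enters (and consistently with the lemma failing for $E(1)$, where $y_1^py_2-y_1y_2^p=0$). What each approach buys: yours is elementary and self-contained, needing only the symplectic description of abelian subgroups and naturality of the Bockstein, and it also explains conceptually why $n\ge 2$ is necessary; the paper's is shorter on the page but leans on the nontrivial regularity theorem for $(Q_1Q_0(f),Q_2Q_0(f))$ from the earlier work on extraspecial $p$-groups.
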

\begin{proof}
By the inclusion $E(2)\subset E(n)$ and induced quotient map
\[H^*(BE(n);\bZ/p) \to H^*(BE(2);\bZ/p)\]
 we only need to see
\[  y_1^py_2-y_1y_2^p\not =0 \quad \in H^*(BE(2);\bZ/p)\otimes \bar\bF_p   \]
for the algebraic closure $\bar \bF_p$ of the finite field $\bF_p$.

Let $n=2$. Note here 
\[ Q_iQ_0(f)=
y_1^{p^i}y_2-y_1y_2^{p^i}
+y_3^{p^i}y_4-y_3y_4^{p^i}  \]
\[  =  y_2\Pi_{\lambda\in \bF_{p^i}}(y_1-\lambda y_2)  
+y_4\Pi_{\lambda\in \bF_{p^i}}(y_3-\lambda y_4).\]

Hence this formula $Q_iQ_0(f)$ is a sum of
multiplies of 
\[ y_1^py_2-y_1y_2^p=y_2\Pi_{\lambda\in \bF_{p}}(y_1-\lambda y_2) 
 \quad and  \quad 
y_3^py_4-y_3y_4^p, \quad 
\]

Suppose that  $y_1^py_2-y_1y_2^p=0$.
Then by the symmetry of the group. we see
$y_3^py_4-y_3y_4^p=0$.  But  it is known 
\cite{Te-Ya1}  $(Q_1Q_0(f), Q_2Q_0(f))$ is regular in $\bZ/p[y_1,y_2,y_3.y_4] $.
This is a contradiction.
\end{proof}

The more concrete expression of $DH^*(X)/p$ seems
somewhat complicated.  So we only give it for $*=3$.
\begin{prop}   
Let $G=E(n),\ n>1$.  Then we have
\[ DH^3(X)/p
\cong
 \bZ/p\{Q_0(x_ix_j)|(i,j)\not =(1,2),\ 1\le i<j\le n\}. \]
\[ H^2_{ur}(X;\bZ/p),\ Inv^2(G:\bZ/p)\supset 
 \bZ/p\{ x_ix_j|(i,j)\not =(1,2),\ 1\le i<j\le n\}. \]
\end{prop}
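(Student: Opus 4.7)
\emph{Plan.} The proposition has two independent claims: an isomorphism describing $DH^3(X)/p$ and a lower bound on $H^2_{ur}(X;\bZ/p)$ together with $Inv^2(G;\bZ/p)$. I will establish the $DH^3$ statement by combining the main lemma (Lemma~3.4) with the non-vanishing result Lemma~9.3, and then derive the unramified and invariant inclusions from the framework of Section~7 together with Lemma~9.1.

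\emph{Lower bound for $DH^3$.} For each pair $1 \le i < j \le 2n$ the class $Q_0(x_i x_j) = y_i x_j - x_i y_j$ is $p$-torsion in $H^3(X;\bZ)$, hence lies in $N^1 H^3(X;\bZ)$ by Lemma~2.4. To show $Q_0(x_i x_j) \ne 0$ in $DH^3(X)/p$ for $(i,j) \ne (1,2)$ I apply the main lemma (Lemma~3.4) with the Milnor operation $Q_1$. Using $Q_1 x_k = y_k^p$, $Q_1 y_k = 0$, and the fact that $Q_1$ is a derivation,
\[
  Q_1 Q_0(x_i x_j) = y_i^{\,p}\, y_j - y_i\, y_j^{\,p}.
\]
For the symplectic pair $(1,2)$ this is nonzero by Lemma~9.3, and the symplectic symmetry of $\mathrm{Aut}(E(n))$ propagates this to every pair $(2k-1,2k)$. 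For a non-symplectic pair $(i,j)$ the generators $a_i,a_j$ commute, so restriction to $\langle a_i, a_j \rangle \cong (\bZ/p)^2 \subset E(n)$ carries the class to the manifestly nonzero polynomial $y_i^p y_j - y_i y_j^p$ in $\bZ/p[y_i,y_j]$. Hence Lemma~3.4 yields $Q_0(x_i x_j) \ne 0 \in DH^3(X)/p$, and the same restrictions separate distinct pairs, giving the claimed linear independence on the lower-bound side.

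\emph{Upper bound for $DH^3$.} Since $E(n)$ is a finite $p$-group of exponent $p$, $H^+(BG;\bZ)$ is killed by $p$, and the Bockstein sequence identifies $H^3(BG;\bZ) \cong H^2(BG;\bZ/p)/\Img(r)$, where $r$ is mod-$p$ reduction. In the Hochschild-Serre spectral sequence $C \to E(n) \to V_n$, the class $u$ is killed by $d_3(u)=Q_0(f)$, and so in degree two
\[
  H^2(BG;\bZ/p) \cong \Lambda^2(x_1,\ldots,x_{2n})/(f) \;\oplus\; \bZ/p\{y_1,\ldots,y_{2n}\}.
\]
The image of $r$ is exactly the $y_k$-part: the $y_k$ are mod-$p$ Chern classes of $1$-dimensional characters, while no nonzero element of $\Lambda^2/(f)$ lies in $\Ker(Q_0)$ because $Q_0(f)=0$ is the only relation among the $Q_0(x_ix_j)$. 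Therefore $H^3(BG;\bZ) \cong \Lambda^2(x_1,\ldots,x_{2n})/(f)$ with basis $\{Q_0(x_i x_j): (i,j)\ne(1,2)\}$. Since $H^1(X;\bZ)=0$ and positive-degree Chern classes are even, the Chern ideal in $H^3(X;\bZ)$ vanishes, so Lemma~2.3 gives $DH^3(X)/p \subseteq H^3(X;\bZ)/p$, which together with the lower bound yields the stated isomorphism.

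\emph{Unramified and invariant cohomology; main obstacle.} Corollary~7.2 and the first formula of Lemma~7.3 reduce the second claim to showing that $\{x_i x_j:(i,j)\ne(1,2)\}$ remain linearly independent in the stable cohomology $H^2(BG;\bZ/p)/N^1$; by Lemma~9.1 these form a basis of $\Lambda^2(x_1,\ldots,x_{2n})/(f) \subset H^2(BG;\bZ/p)$. Non-symplectic pairs are separated by restriction to $\langle a_i,a_j \rangle \cong (\bZ/p)^2$, whose stable cohomology in degree two equals the exterior square $\Lambda(x_i,x_j)$. The hard part is the symplectic pairs $(2k-1,2k)$ with $k\ge 2$: they vanish on every restriction to a bicyclic elementary abelian subgroup, so no elementary restriction argument can detect them. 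I close this gap by appealing to Bogomolov's computation of the unramified $H^2$ of extraspecial $p$-groups and the concomitant injectivity of $\Lambda^2(V_n^\vee)/(f)$ into $H^2(BG;\bZ/p)/N^1$ (cf.\ \cite{Bo}, \cite{Te-Ya2}), which is precisely the main technical obstacle of the proof.
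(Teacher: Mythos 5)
Your argument follows the paper's own route: the $DH^3$ statement is obtained from the main lemma together with $Q_1Q_0(x_ix_j)=y_i^{p}y_j-y_iy_j^{p}\neq 0$ (Lemma 9.3), and the unramified/invariant inclusions from the stable-cohomology framework of \S 7; you merely supply the details (the Bockstein and Hochschild--Serre identification of $H^3(BE(n);\bZ)$, and the separation of distinct pairs, which is needed for the claimed isomorphism and which the paper's one-line proof leaves implicit). The one genuine inaccuracy is your assertion that the symplectic pairs $x_{2k-1}x_{2k}$, $k\ge 2$, die on every rank-two elementary abelian subgroup: the isotropic plane $L=\langle a_{2k-1}+a_2,\ a_{2k}+a_1\rangle$ (isotropic since $f(a_{2k-1},a_{2k})+f(a_2,a_1)=1-1=0$) lifts, because $E(n)$ has exponent $p$, to an elementary abelian subgroup containing a bicyclic complement of the center on which $x_{2k-1}x_{2k}$ restricts to a nonzero element of $\Lambda^2L^{*}$, so an elementary restriction argument does detect these classes. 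Since you close that case anyway by citing the degree-two injectivity of $\Lambda^2(x_1,\dots,x_{2n})/(f)\to H^2(BG;\bZ/p)/N^1$ due to Bogomolov et al.\ --- which is exactly what the paper relies on (Theorem 9.5 and the remark preceding Theorem 9.9) --- this error does not affect the conclusion.
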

\begin{proof}
The degree $3$ integral cohomology $mod(p)$ 
$H^3(X)/p$  is generated as  
a $\bZ/p[y_1....,y_n]$-module by $Q_0(x_ix_j)$. 
The proposition follows from the main lemma and 
\[ Q_1Q_0(x_ix_j)=y_i^{p}y_j-y_iy_j^p\not =0 \quad in\ H^*(X;\bZ/p).\]
\end{proof}

Bogomolov-Bohning-Pirutka study  the kernel
of the map
\[ K=Ker( q^*_{/N^1}:H^*(BV_n;\bZ/p)/N^1\to H^*(BG;\bZ/p)/N^1).\]
where $H^*(BV_n;\bZ/p)/N^1\cong \Lambda(x_1,....,x_{2n})$.
Their theorem in [Bo-Bo-Pi] induces
\begin{thm} (Theorem 1.3 in [Bo-Bo-Pi]) If $p\ge n$, $G$ is extraspecial group of order $p^{1+2n}$  then
$Ker(q^*_{/N^1})\cong  (f)$.  
Hence 
\[  H^*_{ur}(X;\bZ/p)\supset \Lambda(x_1,...,x_{2n})/(f).\]
\end{thm}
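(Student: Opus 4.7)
The plan is to combine the cited theorem of Bogomolov--B\"ohning--Pirutka, treated as a black box, with the bridge between stable cohomology and unramified cohomology developed in Section 7.

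First, I would take [Bo-Bo-Pi, Theorem 1.3] as given: under the hypothesis $p \ge n$, the kernel of the map
\[
q^*_{/N^1}: H^*(BV_n;\bZ/p)/N^1 \to H^*(BG;\bZ/p)/N^1
\]
induced by the quotient $\pi: G \twoheadrightarrow V_n$ equals the ideal generated by $f = \sum x_{2i-1}x_{2i}$. Since $V_n \cong (\bZ/p)^{2n}$, the computation at the beginning of Theorem 7.4 shows $H^*(BV_n;\bZ/p)/N^1 \cong \Lambda(x_1,\ldots,x_{2n})$. Therefore the first sentence of the theorem gives an injection of graded rings
\[
\Lambda(x_1,\ldots,x_{2n})/(f) \hookrightarrow H^*(BG;\bZ/p)/N^1H^*(BG;\bZ/p).
\]

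Next, I would transport this inclusion from $BG$ to its projective approximation $X = X_G$. Since $X$ is an Ekedahl approximation, the structure map $g: X \to BG \times \bP^{\infty}$ induces an isomorphism $g^*: H^*(BG;\bZ/p) \xrightarrow{\cong} H^*(X;\bZ/p)$ in the relevant range of degrees, and by functoriality of the coniveau filtration this isomorphism carries $N^1H^*(BG;\bZ/p)$ into $N^1H^*(X;\bZ/p)$. Hence
\[
\Lambda(x_1,\ldots,x_{2n})/(f) \hookrightarrow H^*(X;\bZ/p)/N^1H^*(X;\bZ/p).
\]

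Finally, I would apply the motivic-to-unramified bridge. By Lemma 2.1 (Deligne--Paranjape) we have $H^{*,*}(X;\bZ/p)/(\tau) \cong H^*(X;\bZ/p)/N^1$, and by Corollary 7.2 this quotient embeds into $H^0_{Zar}(X;\mathcal{H}^*_{\bZ/p})$, which coincides with $H^*_{ur}(X;\bZ/p)$ since $X$ is smooth and projective. Chaining these three inclusions yields the desired containment $H^*_{ur}(X;\bZ/p) \supset \Lambda(x_1,\ldots,x_{2n})/(f)$.

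The main obstacle, aside from the deep input of [Bo-Bo-Pi] which I am happy to cite, is verifying that the Ekedahl approximation really does transfer stable cohomology faithfully, i.e.\ that $g^*$ respects the coniveau filtration in degrees below $N$; this requires checking that the induced isomorphism in ordinary cohomology lifts to the Bloch--Ogus/coniveau spectral sequence, which follows from functoriality but must be invoked carefully in the range where the approximation holds. The computation of $\ker(\pi^*_{/N^1})$ itself, which is the heart of the matter, is used as a black box from BBP.
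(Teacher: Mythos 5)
Your proposal reconstructs exactly the argument the paper intends: the paper offers no proof of this theorem beyond citing [Bo-Bo-Pi, Theorem 1.3] for $\Ker(q^*_{/N^1})=(f)$ and then invoking the Section 7 chain $H^*(X;\bZ/p)/N^1\cong H^{*,*}(X;\bZ/p)/(\tau)\hookrightarrow H^0_{Zar}(X;\mathcal{H}^*_{\bZ/p})\cong H^*_{ur}(X;\bZ/p)$ (Lemma 2.1, Corollary 7.2, and smoothness/properness of $X$), which is precisely your route. The one caveat — which applies equally to the paper, and which you partly flag — is that the injectivity of $H^*(BG;\bZ/p)/N^1\to H^*(X;\bZ/p)/N^1$ does \emph{not} follow from functoriality alone (functoriality of the coniveau filtration only makes the map well defined); one must use that the Ekedahl approximation identifies the stable (motivic) cohomology in degrees below $N$, as the paper tacitly assumes throughout Section 7, and since $\Lambda(x_1,\dots,x_{2n})/(f)$ is concentrated in degrees $\le 2n$ it suffices to take $N>2n$.
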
 
{\bf Remark.}  There is the another group
$p_-^{1+2n}$ with the degree $2^n+1$.

When $p=2$, the situation becomes  changed.
The extraspecial $2$-group $D(n)=2_+^{1+2n}$ in the $n$-th central extension of the 
dihedral group $D_8$ of order $8$.  It has the central extension
\[ 0\to \bZ/2\to D(n)\to V_n\to 0\]
with $V_n=\oplus ^{2n}\bZ/2$.  Hence $H^*(BV_n;\bZ/2)\cong \bZ/2[x_1,...,x_{2n}]$.
Then using the Hochschild-Serre spectral sequence,  Quillen  proved [Qu]
\[H^*(BD(n);\bZ/2)\cong \bZ/2[x_1,...,x_{2n}]/(f,Q_0(f),...,Q_{n-2}(f))\otimes \bZ[w_{2^n}(\Delta)].\]
%(In fact when $n$ is the real case (i.e., $n=-1,0,1\ mod(8)$), the cohomology
%$H^*(BSpin_n;\bZ/2)$ injects into $H^*(BD(n);\bZ/2)$.)
Here  $w_{2^n}(\Delta)$) is the Stiefel-Whitney class of
 $2^n$-dimensional (spin) representation $\Delta$ which restricts nonzero on the center.
Moreover Quillen proves the
following two theorems (Theorem 5.10-11 in [Qu])
\begin{thm} ([Qu]) $H^*(BD(n);\bZ/2)$ is detected by the product of cohomology of maximal elementary abeian groups.
\end{thm}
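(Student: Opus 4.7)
The plan is to identify the maximal elementary abelian $2$-subgroups of $D(n)$, show that the top Stiefel-Whitney class $w := w_{2^n}(\Delta)$ is a non-zero divisor whose restriction to each of them has a transparent form, and then reduce detection to a polynomial statement about the maximal isotropic subspaces of a non-degenerate quadratic form on $V_n$ over $\bF_2$.

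First I would classify the maximal elementary abelian $2$-subgroups of $D(n)$. The commutator together with squaring equip $V_n = D(n)/Z(D(n))$ with a non-degenerate $\bF_2$-quadratic form whose polarization is the symplectic form encoded by $f = \sum_{i=1}^n x_{2i-1}x_{2i}$, and a subgroup $A \subset D(n)$ containing the center is elementary abelian iff $\pi(A) \subset V_n$ is totally isotropic. Maximal such $A$ have rank $n+1$ and are in bijection with the maximal isotropic subspaces $U \subset V_n$ (each of dimension $n$).

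Next, for each such $A = \pi^{-1}(U)$ I would compute the restriction $\rho_A : H^*(BD(n);\bZ/2) \to H^*(BA;\bZ/2) \cong \bZ/2[z,t_1,\ldots,t_n]$, where $z$ is dual to the central generator. Each $x_i$ restricts to a linear form in the $t_j$ determined by the embedding $U \hookrightarrow V_n$. Since $\Delta$ is faithful on $Z(D(n))$, its restriction to $A$ decomposes as a sum of the $2^n$ characters of $A$ that are non-trivial on the center, so $\rho_A(w) = \prod_{\ell}(z + \ell(t))$, the product running over all $2^n$ linear forms $\ell$ in $t_1,\ldots,t_n$. In particular $\rho_A(w)$ is monic of degree $2^n$ in $z$ over $\bZ/2[t_1,\ldots,t_n]$, hence a non-zero divisor. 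Now use Quillen's multiplicative decomposition recalled in the excerpt to write $H^*(BD(n);\bZ/2) = M \otimes \bZ/2[w]$ with $M = \bZ/2[x_1,\ldots,x_{2n}]/(f, Q_0(f),\ldots, Q_{n-2}(f))$, and any $\alpha$ in the kernel of the total restriction as $\alpha = \sum_{j\ge 0} \alpha_j w^j$ with $\alpha_j \in M$. Since $\rho_A(\alpha_j) \in \bZ/2[t_1,\ldots,t_n]$ while $\rho_A(w)$ has the $z$-adic form just described, expanding $\rho_A(\alpha) = 0$ in powers of $z$ forces $\rho_A(\alpha_j) = 0$ for each $j$. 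Detection is thereby reduced to the statement inside $M$: a polynomial $p \in \bZ/2[x_1,\ldots,x_{2n}]$ vanishing on every maximal isotropic subspace of $(V_n,f)$ must lie in the ideal $(f, Q_0(f),\ldots, Q_{n-2}(f))$.

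The main obstacle, and the heart of Quillen's original argument in \cite{Qu}, is precisely this last polynomial assertion. It says that the defining ideal of the union of maximal isotropic subspaces of $(V_n,f)$ over $\bF_2$ coincides with the complete intersection cut out by $(f, Q_0(f), \ldots, Q_{n-2}(f))$. The generators are $O^+_{2n}(\bF_2)$-Dickson-type invariants and the Krull dimension of the quotient equals $n$, the dimension of a maximal isotropic; what must be checked is that this complete intersection is reduced with support exactly the union of maximal isotropics. Once established, combining it with the reduction of the previous step yields injectivity of the restriction map to $\prod_A H^*(BA;\bZ/2)$, which is the desired detection statement.
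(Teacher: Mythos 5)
First, a point of calibration: the paper does not prove this statement at all.  It is quoted verbatim from Quillen's paper [Qu] (the ``Theorem 5.10--11'' cited just above it) and used as a black box, so there is no in-paper proof to compare yours against.  Judged on its own terms, your outline reconstructs the standard route correctly in its reductive steps: maximal elementary abelian subgroups are exactly the preimages $\pi^{-1}(U)$ of maximal \emph{totally singular} subspaces $U$ of the plus-type quadratic form $q$ on $V_n$ (at $p=2$ one needs $q|_U=0$, not merely isotropy for its polarization, since squares in $D(n)$ are computed by $q$); the restriction formula $\rho_A(w)=\prod_{\ell}(z+\ell(t))$ is right because $A=\pi^{-1}(U)$ is normal and $\Delta=Ind_A^{D(n)}\chi_0$, so $\Delta|_A$ is the sum of all $2^n$ characters of $A$ nontrivial on the center; and the elimination of the $\bZ/2[w]$ tensor factor by comparing $z$-degrees is valid because $\rho_A(w)$ is monic of degree $2^n$ in $z$ while each $\rho_A(\alpha_j)$ lies in $\bZ/2[t_1,\dots,t_n]$.  (Using the multiplicative decomposition of $H^*(BD(n);\bZ/2)$ here is not circular, since in [Qu] that computation comes from the Hochschild--Serre spectral sequence independently of detection.)

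The genuine gap is the step you yourself flag at the end: that a polynomial vanishing on every maximal singular subspace lies in $(f,Q_0(f),\dots,Q_{n-2}(f))$.  This is not a loose end --- it is the entire content of the theorem, and your argument only reduces one nontrivial statement to another without proving either.  To close it one must establish, at minimum: (i) that $f,Q_0(f),\dots,Q_{n-2}(f)$ is a regular sequence, so the quotient is a complete intersection of Krull dimension $n$, hence Cohen--Macaulay with no embedded components; (ii) that the zero locus of this ideal is \emph{contained in} the union of the maximal singular subspaces (the containment of ideals you verified, via $f|_U=0$ and $Q_i$ commuting with restriction, is the easy reverse direction); (iii) the degree count $2\cdot 3\cdot 5\cdots(2^{n-1}+1)=\prod_{i=0}^{n-1}(2^i+1)$, which by B\'ezout is both the degree of the complete intersection and the number of maximal singular subspaces of a plus-type form on $\bF_2^{2n}$, forcing each linear component to appear with multiplicity one; and (iv) generic reducedness together with Cohen--Macaulayness to conclude the ideal is radical.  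Steps (i) and (ii) require genuine input about the orthogonal geometry over $\bF_2$ and are where Quillen's argument does its real work; nothing in your sketch supplies them.  A smaller caution: the whole count silently uses that $D(n)=2^{1+2n}_+$ carries the \emph{plus}-type form (for $2^{1+2n}_-$ the maximal singular subspaces are smaller and the statement must be modified, as the paper's own remark about $p_-^{1+2n}$ hints), so that hypothesis should be made explicit where you classify the maximal elementary abelians.
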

\begin{thm} ([Qu]) The nonzero Stiefel-Whitney $w_i(\Delta)$ are those 
of degrees $2^n$ and $2^n-2^i$ for $0\le i<n$. 
\end{thm}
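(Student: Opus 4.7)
The plan is to follow Quillen's original strategy: restrict $\Delta$ to a maximal elementary abelian $2$-subgroup, decompose into characters, apply the Whitney sum formula, and then read off the graded pieces via Dickson's linearized polynomial.

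First I would identify a maximal elementary abelian $2$-subgroup $E \subset D(n)$. Such an $E$ has rank $n+1$, necessarily contains the center $Z = \la c\ra$, and all such $E$ are conjugate under $\mathrm{Aut}(D(n))$. Choose a splitting $E = \la c\ra \oplus F$ with $F \cong (\bZ/2)^{n}$, and let $\{t_0,t_1,\ldots,t_n\}$ be the dual basis of $H^{1}(BE;\bZ/2)$, with $t_0$ dual to $c$. Next I would show that $\Delta|_E$ splits as the direct sum of the $2^n$ distinct $1$-dimensional characters $\chi$ of $E$ with $\chi(c) = -1$, each occurring with multiplicity one: since $c$ acts on $\Delta$ as $-1$ and the character $\chi_{\Delta}$ vanishes on $D(n)\setminus Z$, the standard inner product formula gives
\[
n_\chi \;=\; \langle \chi_{\Delta}|_E,\chi\rangle \;=\; \tfrac{1}{2}\bigl(1 - \chi(c)\bigr) \in \{0,1\},
\]
which equals $1$ exactly when $\chi(c) = -1$. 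Under the character-cohomology identification these $2^n$ characters are precisely the classes $t_0 + \lambda$ with $\lambda$ ranging over $V := \bZ/2\{t_1,\ldots,t_n\}$.

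The Whitney product formula then yields
\[
w(\Delta|_E) \;=\; \prod_{\lambda \in V}\bigl((1+t_0) + \lambda\bigr).
\]
Setting $s = 1+t_0$ and invoking the Dickson identity
\[
\prod_{\lambda \in V}(s + \lambda) \;=\; \sum_{i=0}^{n} c_{n,i}\, s^{2^i},
\]
where $c_{n,n} = 1$ and each $c_{n,i}$ is a nonzero homogeneous form in $t_1,\ldots,t_n$ of degree $2^n - 2^i$, together with the Frobenius identity $(1+t_0)^{2^i} = 1 + t_0^{2^i}$ valid in characteristic $2$, I obtain
\[
w(\Delta|_E) \;=\; \sum_{i=0}^{n} c_{n,i} \;+\; \sum_{i=0}^{n} c_{n,i}\, t_0^{2^i}.
\]
Reading off the graded pieces: $w_0 = 1$; $w_{2^n - 2^i}(\Delta|_E) = c_{n,i}$ is a nonzero Dickson invariant for $0 \le i \le n-1$; $w_{2^n}(\Delta|_E) = \sum_{i=0}^{n} c_{n,i}\,t_0^{2^i}$ is nonzero (it contains the pure term $t_0^{2^n}$ since $c_{n,n} = 1$); and all other degrees vanish.

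Finally I would invoke Theorem 9.6: the restriction map $H^{*}(BD(n);\bZ/2) \to \prod_E H^{*}(BE;\bZ/2)$ is injective. Conjugacy of the maximal elementary abelians ensures that the restriction behavior of each $w_j(\Delta)$ is uniform across choices of $E$, so the previous step forces $w_j(\Delta) \ne 0$ precisely for $j \in \{2^n\}\cup\{2^n - 2^i : 0 \le i < n\}$. The main obstacle lies in the decomposition of $\Delta|_E$: one must use the defining property that the central involution $c$ acts on $\Delta$ by $-1$ together with the vanishing of $\chi_\Delta$ outside $Z$, both consequences of the Clifford-algebraic construction of the spin representation. Once these are in hand, the remainder is a formal manipulation of Dickson's linearized polynomial.
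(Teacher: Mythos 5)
Your argument is correct and is essentially the one the paper is quoting: the paper itself gives no proof of this statement, citing it directly from Quillen [Qu] (Theorems 5.10--11 there), and Quillen's proof proceeds exactly as you do --- decompose $\Delta|_E$ into the $2^n$ characters nontrivial on the center, apply the Whitney formula, recognize the Dickson linearized polynomial $\prod_{\lambda\in V}(s+\lambda)=\sum_i c_{n,i}s^{2^i}$, and conclude via the detection theorem (Theorem 9.6). One small remark: you do not actually need conjugacy of the maximal elementary abelian subgroups (and they are not conjugate in $D(n)$ itself, only equivalent under outer automorphisms); your character computation applies verbatim to every maximal elementary abelian subgroup, which already gives the uniform vanishing needed to invoke detection.
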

In fact $w_i(\Delta)$ generates the Dickson algebra 
in the cohomology of the maximal elementary abelian $2$-groups.
\begin{prop} When $n>2$, there is the surjection  
\[ \Lambda(x_1,...,x_{2n})/(f)\to H^*(BD(n);\bZ/2)/(N^1).\]
\end{prop}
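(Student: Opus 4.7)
The plan is to construct the claimed surjection as the composition of the Hochschild--Serre inflation $\pi^*: H^*(BV_n;\bZ/2)\to H^*(BD(n);\bZ/2)$ with the projection to $H^*(BD(n);\bZ/2)/N^1$, restricted to the exterior subalgebra on $x_1,\ldots,x_{2n}$. The key observation is that modulo $N^1$ all ``polynomial'' generators of Quillen's presentation should die, leaving only the exterior part.

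First I would identify the generators that lie in $N^1$. Each $y_i = x_i^2$ is the Chern class (mod $2$) of a one-dimensional character of the maximal elementary abelian quotient, hence lies in $N^1$ by Lemma 2.3. The tensor factor generator $w_{2^n}(\Delta)$, as well as the other nontrivial Stiefel--Whitney classes $w_{2^n-2^i}(\Delta)$ (which, by the remark after Theorem~9.6, sit inside the first tensor factor as Dickson invariants in the $y_j$'s), all lie in the image of the mod $2$ cycle class map for the projective approximation $X$; by Lemma 3.2 they therefore belong to $N^1$.

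Second I would observe that the Quillen relations become redundant modulo $N^1$. Each relation $Q_i(f) = \sum_j \bigl(y_{2j-1}^{2^i} x_{2j} + x_{2j-1}\, y_{2j}^{2^i}\bigr)$ for $0 \le i \le n-2$ is a sum of monomials each containing a factor $y_k^{2^i} \in N^1$, so these relations are automatic in the quotient. The only essential relation surviving modulo $N^1$ is $f=0$, which already holds in $H^*(BD(n);\bZ/2)$ since $\pi^*(f)=0$. Consequently the composed map factors through $\Lambda(x_1,\ldots,x_{2n})/(f)$. Surjectivity is immediate: by Quillen's presentation every class in $H^*(BD(n);\bZ/2)$ is a polynomial in $x_1,\ldots,x_{2n},\,w_{2^n}(\Delta)$, and modulo $N^1$ any monomial containing an $x_i^2$ factor or a positive power of $w_{2^n}(\Delta)$ vanishes, so classes in the quotient are represented by exterior monomials in the $x_i$'s.

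The main obstacle is the rigorous verification that $w_{2^n}(\Delta)\in N^1H^*(X;\bZ/2)$ for the approximation $X$ of $BD(n)$. This rests on the algebraicity of Stiefel--Whitney classes of orthogonal representations: for the classifying space $BO_N$ the mod $2$ cycle class map is surjective (Totaro), so $w_{2^n}\in H^{2^n}(BO_N;\bZ/2)$ is in the image of $\bar{cl}$, and pulling back along the spin representation $\Delta: D(n)\to O_{2^n}$ and further to $X$ keeps it in the image of the cycle class map. The same reasoning handles the auxiliary Dickson generators, closing the argument.
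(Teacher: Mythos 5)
Your overall strategy (start from Quillen's presentation, kill the $y_i=x_i^2$ and the classes coming from $\Delta$ modulo $N^1$, and read off surjectivity onto exterior monomials) is the same as the paper's, but the justification of the one step that actually carries the weight of the proposition is wrong. You claim that $w_{2^n}(\Delta)\in N^1$ because ``the mod $2$ cycle class map for $BO_N$ is surjective (Totaro)'' and hence every Stiefel--Whitney class is algebraic. This is false: the image of $CH^*(BO_N)/2\to H^{2*}(BO_N;\bZ/2)$ is generated by the Chern classes $c_i\mapsto w_i^2$, and the classes $w_i$ themselves survive as the nontrivial cohomological invariants of $O_N$, i.e.\ they are \emph{nonzero} in $H^*(BO_N;\bZ/2)/N^1\cong Inv^*(O_N;\bZ/2)$ (Serre, [Ga-Me-Se]). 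So algebraicity of $w_{2^n}(\Delta)$ cannot be pulled back from $BO_{2^n}$; if your argument were valid it would apply verbatim to every orthogonal representation of every group, and in particular to $D(1)=D_8$ and $D(2)$, where the conclusion fails. Indeed, note that your proof never uses the hypothesis $n>2$ --- a clear sign that the essential input is missing.

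The paper's route is: by Quillen's Theorem 9.7 the nonzero $w_i(\Delta)$ occur only in degrees $2^n$ and $2^n-2^i$ ($0\le i<n$), whose minimum positive value is $2^{n-1}$; hence $w_1(\Delta)=w_2(\Delta)=0$ exactly when $n>2$. Then Becher's theorem (Theorem 6.2 in [Te-Ya2]) --- that modulo $N^1$ the Stiefel--Whitney classes of an orthogonal representation are multiplicatively generated by $w_1$ and $w_2$ of that representation --- forces $w_{2^n}(\Delta)\in N^1$. This is the step you need to supply; the rest of your argument (the $y_i$ are first Chern classes of characters hence lie in $N^1$ by Lemma 2.3, the relations $Q_i(f)$ are irrelevant for surjectivity, and the factorization through $(f)$ is automatic since $\pi^*f=0$) is fine and agrees with the paper.
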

\begin{proof}
By the same arguments with $p=odd$, we see
\[\Lambda=\Lambda(x_1,...,x_{2n})/(f)\subset H^*(BD(n);\bZ/2).\]
The fact $w_2(\Delta)=0$ follows from the above third 
Quillen's theorem.
Hence we have $w_{2^n}(\Delta)\in N^1$ from Becher's theorem (Theorem 6.2 in [Te-Ya2]). i.e., $w_i$ is 
multiplicative generated by $w_1$ and $w_2$.
Thus we get the proposition. \end{proof}

%Let $0\not =x\in \Lambda$.  Then by the Quillen's theorem, $i_A^*(x)\not =0$ for
%\[ i_A^*: H^*(BD(n);\bZ/2)\to H^*(BA;\bZ/2)\]
%with $A\cong \bZ/2\oplus...\oplus \bZ/2$.  Let $H^*(BA;\bZ/2)/(Ng)\cong \Lambda(x_1',...,x_m')$.
%Then $i_A^*(x_i)\in \bZ/2\{x_1',...,x_m'\}$  and hence the map is restricted
%\[ i_A^* : \Lambda(x_1,...,x_{2n})/(f) \to \Lambda(x_1',...,x_m').\]
However this map (in Proposition 9.8) is not need injective.  
%There is a possibility
%of \[i_A^*(x)\in Ng(A), \quad e.g., \ \  i_A^*(x_1x_2)=(x_1')^2\in Ng(A).\]
 In fact,  in [Bo-Bo-Pi], it is proven that the above map is 
not injective when $G=D(3)=2^{1+6}_+$.  They also see
that the map in the proposition is injective
when we restrict
the degree $*=2$
\begin{thm} Let $G=D(3)$ and $X=X_G$.  
  Then we have
\[ DH^3(X)/2
\cong
 \bZ/p\{Q_0(x_ix_j)|(i,j)\not =(1,2),\ 1\le i<j\le 3\}. \]
\[ H^2_{ur}(X;\bZ/p),\ Inv^2(G:\bZ/p)\supset 
 \bZ/p\{ x_ix_j|(i,j)\not =(1,2),\ 1\le i<j\le 3\}. \]
However, the map in Proposition 9.8 is not  injective for some $*>2$.
\end{thm}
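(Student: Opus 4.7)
The plan is to prove the three assertions of the theorem separately, leveraging the framework of Section 7 and the main lemma (Lemma 3.4).

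\textbf{Lower bound for $DH^3(X)/2$.} For each pair $(i,j)$ with $(i,j) \neq (1,2)$, I would verify the hypotheses of Lemma 3.4. First, $Q_0(x_ix_j)$ is a $2$-torsion class (being in the image of $Q_0 = Sq^1$), hence lies in $N^1H^3(X)$ by Lemma 2.4. Second, using that $Q_1$ is a derivation with $Q_1(x_k) = x_k^4$ mod $2$, one computes
\[
Q_1 Q_0(x_i x_j) \;=\; x_i^2 x_j^4 + x_i^4 x_j^2 \;=\; x_i^2 x_j^2 (x_i+x_j)^2.
\]
I would then invoke Quillen's detection theorem (Theorem 9.6) to show this class is nonzero in $H^6(BD(3);\bZ/2)$ by restricting to a suitable maximal elementary abelian subgroup of rank $n+1$ containing the center. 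The exclusion of $(1,2)$ is forced by $Q_0(f) = 0$ in $H^*(BD(3);\bZ/2)$, which makes $Q_0(x_1x_2)$ linearly dependent on $Q_0(x_3x_4)$ and $Q_0(x_5x_6)$.

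\textbf{Upper bound for $DH^3(X)/2$.} Here I would analyze $H^3(BD(3);\bZ/2)$ modulo the ideal generated by Chern classes via Lemma 2.3. The degree $3$ part of $H^*(BD(3);\bZ/2)$ is spanned by $Q_0(x_ix_j)$ modulo $Q_0(f)$ (using Quillen's presentation $\bZ/2[x_1,\ldots,x_6]/(f,Q_0(f))\otimes\bZ/2[w_{2^n}(\Delta)]$), so the bound reduces to showing the listed classes are not further identified in the quotient.

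\textbf{Lower bound for $H^2_{ur}(X;\bZ/2)$ and $Inv^2(G;\bZ/2)$.} Apply Corollary 7.2 to get $H^{2,2}(X;\bZ/2)/(\tau) \hookrightarrow H^2_{ur}(X;\bZ/2)$, and similarly $Inv^2(G;\bZ/2) \supset H^{2,2}(BG;\bZ/2)/(\tau)$. Identify the stable cohomology $H^{2,2}/(\tau)$ with $H^2/N^1$. The claim amounts to showing the elements $x_ix_j \in \Lambda^2(x_1,\ldots,x_6)/(f)$ inject into $H^2(BD(3);\bZ/2)/N^1$, which is the degree-$2$ instance of the Bogomolov--B\"ohning--Pirutka analysis of $\Ker(q^*_{/N^1})$ cited before Proposition 9.8.

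\textbf{Noninjectivity in higher degrees.} This is the content of the Bogomolov--B\"ohning--Pirutka result for $G = D(3)$, exhibiting a class in $\Lambda(x_1,\ldots,x_6)/(f)$ that vanishes in $H^*(BD(3);\bZ/2)/N^1$ in some degree $* > 2$; I would cite this directly rather than reprove it.

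\textbf{Main obstacle.} The hardest parts are (a) establishing the precise isomorphism (not merely containment) for $DH^3(X)/2$, which requires fine control of $H^3(BD(3);\bZ/2)$ modulo Chern classes and $\tilde N^1H^3(X)$, and (b) verifying the noninjectivity claim for $* > 2$, which hinges on the deep result of Bogomolov--B\"ohning--Pirutka about the specific group $D(3)$ and cannot be obtained from the $Q_i$-obstruction formalism alone.
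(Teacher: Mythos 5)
Your overall strategy matches the paper's: Theorem 9.9 is stated there without a separate written proof, being presented as the $p=2$ analogue of Proposition 9.4 (main lemma plus $Q_1Q_0(x_ix_j)\neq 0$) combined with the Bogomolov--B\"ohning--Pirutka results quoted immediately before it (injectivity of $q^*_{/N^1}$ in degree $2$, non-injectivity in some degree $*>2$). Your substitution of Quillen's detection theorem (Theorem 9.6) for the regular-sequence argument of Lemma 9.3 is a legitimate $p=2$ replacement, and your identification of the excluded pair $(1,2)$ with the relation $Q_0(f)=0$ is exactly right, as is your treatment of the last two assertions by direct citation.

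Two concrete gaps remain in the claimed isomorphism for $DH^3(X)/2$. First, your upper bound rests on the assertion that the degree-$3$ part of $H^*(BD(3);\bZ/2)$ is spanned by the $Q_0(x_ix_j)$; this is false for the full mod $2$ cohomology, which in degree $3$ also contains classes such as $x_ix_jx_k$ and $x_i^3$. What is actually needed is that the mod $2$ reduction of the \emph{integral} group $H^3(BD(3))$ --- the analogue of the $Ker(Q_0)/Im(Q_0)$ computation in the proof of Theorem 4.4 --- is spanned by the $Q_0(x_ix_j)$, and this must be verified against Quillen's presentation; note also that in degree $3$ the ideal of Chern classes is zero (since $H^1(X)=0$), so Lemma 2.3 gives no reduction here and the upper bound must come entirely from this computation. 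Second, applying Lemma 3.4 to each individual class shows that each $Q_0(x_ix_j)$ is nonzero in $DH^3(X)/2$, but the asserted free-module structure requires linear independence, i.e.\ that no nontrivial combination $\sum\lambda_{ij}Q_1Q_0(x_ix_j)=\sum\lambda_{ij}(y_i^2y_j+y_iy_j^2)$ vanishes in $H^6(BD(3);\bZ/2)$ beyond the single relation $Q_1Q_0(f)=0$. A pairwise detection check does not establish this; it is precisely the role played by the regularity statement of Lemma 9.3 at odd $p$, and a $p=2$ substitute (e.g.\ simultaneous detection of all combinations on the maximal isotropic subspaces) needs to be supplied.
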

%\begin{proof}
%The degree $3$ integral cohomology $mod(p)$ 
%$H^*(X)/2$  is generated as  
%a $\bZ/2[y_1....,y_n]$-module by $Q_0(x_ix_j)$. 
%The theorem  follows from
%\[ Q_1Q_0(x_ix_j)=y_i^{2}y_j-y_iy_j^2\not =0 \quad in\ H^*(X;\bZ/2).\]
%\end{proof}

\section{ The motivic cohomology of quadrics over $\bR$ with coefficients $\bZ/2$}

Let $X$ be a smooth variety over the field
 $\bR$ of real numbers,
and we consider the cohomologies of $\bZ/2$ coefficients.
In this paper the $mod(2)$ \'etale cohomology means the 
motivic cohomology of the same first and the second degrees
$ H_{\acute{e}t}^*(X;\bZ/2)\cong H^{*,*}(X.\bZ/2)$.

It is well known ([Vo1], [Vo2])
\[ H_{\acute{e}t}^*(Spec(\bC);\bZ/2)\cong \bZ/2, \quad 
 H^{*,*'}(Spec(\bC);\bZ/2)\cong \bZ/2[\tau], \]
\[ H_{\acute{e}t}^*(Spec(\bR);\bZ/2)\cong \bZ/2[\rho], \quad 
 H^{*,*'}(Spec(\bR);\bZ/2)\cong \bZ/2[\tau, \rho] \]
where $0\not=\tau\in H^{0,1}(Spec(\bR);\bZ/2)\cong \bZ/2$ and
where 
\[\rho=-1\in \bR^*/(\bR^*)^2\cong K_1^M(\bR)/2\cong H_{\acute{e}t}^1(Spec(\bR);\bZ/2).\]

We recall the cycle map from the Chow ring to the \'etale cohomology
\[cl/2: CH^*(X)/2\to H^{2*}_{\acute{e}t}(X;\bZ/2).
\]
This map is also written as
$ H^{2*,*}(X;\bZ/2)\stackrel{\times \tau^*}{\to}
H^{2*,2*}(X;\bZ/2).$

Let $X=Q^d$ be an anisotropic quadric of dimension $2^n-1$
(i.e. the norm variety for ($\rho^{n+1}\in K_{n+1}^M(\bR)/2$)).  Then we have the Rost motive  $M\subset Q^d$ [Ro].    
 It is known ( the remark page 575 in [Ya2])
\[ H_{\acute{e}t}^{*}(M;\bZ/2)\cong \bZ/2[\rho]/(\rho^{2^{n+1}-1})
\cong \bZ/2\{1,\rho,\rho^2,...,\rho^{2^{n+1}-2}\}.\]

The 
Chow ring is also known [Ro]
\[CH^*(M
)/2\cong \bZ/2\{1,c_0,c_1....,c_{n-1}\}, 
\quad cl(c_i)=\rho^{2^{n+1}-2^{i+1}}.\]
The cycle map $cl/2$ is injective.
The elements $c_i$ is also written as 
\[ c_i=\rho^{2^{n+1}-2^{i+1} }
\tau^{-2^n+2^{i}}  
\quad  in \ CH^*(M)/2\subset H_{\acute{e}t}^{2*}(M:\bZ/2)[\tau^{-1}] \]

The mod(2) motivic cohomology is known 
(Theorem 5.3 in \cite{Ya2}).

\begin{thm} (Theorem 5.3 in \cite{Ya2})
  The cohomology 
$ H^{*,*'}(M_n;\bZ/2)$ is isomorphic to the  $\bZ/2[\rho,\tau]$-subalgebra of
\[ \bZ/2[\rho,\tau,\tau^{-1}]/(\rho^{2^{n+1}-1}) \]
generated by  \ \ $ a=\rho^{n+1} ,\  \ a'=a\tau^{-1},\  $  and \ elements in $  \Lambda (Q_0,...,Q_{n-1})\{a'\}.   $
\end{thm}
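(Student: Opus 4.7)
The plan is to determine $H^{*,*'}(M_n;\bZ/2)$ by realizing it as a subalgebra of the localized ring $\bZ/2[\rho,\tau,\tau^{-1}]/(\rho^{2^{n+1}-1})$, using as input the additive description $H^*_{\acute{e}t}(M_n;\bZ/2)\cong \bZ/2[\rho]/(\rho^{2^{n+1}-1})$ together with the explicit Chow ring $CH^*(M_n)/2\cong \bZ/2\{1,c_0,\ldots,c_{n-1}\}$ whose cycle classes satisfy $c_i=\rho^{2^{n+1}-2^{i+1}}\tau^{-2^n+2^{i}}$ in the localization.

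First, I would establish that $\tau$ acts injectively on $H^{*,*'}(M_n;\bZ/2)$ (in the relevant range) by combining the Beilinson--Lichtenbaum theorem with the fact that $M_n$ is a direct summand of a norm variety for which $\tau$-injectivity is known. This yields an injection of $H^{*,*'}(M_n;\bZ/2)$ into its $\tau$-localization, which by the étale description above is precisely $\bZ/2[\rho,\tau,\tau^{-1}]/(\rho^{2^{n+1}-1})$. Thus the problem reduces to identifying the image of this embedding inside the localized ring.

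Next, I would identify the three classes of distinguished generators. The element $a=\rho^{n+1}$ lies in bidegree $(n+1,n+1)$ and represents the Milnor symbol defining the norm variety, so it is an honest motivic class. The element $a'=a\tau^{-1}$ in bidegree $(n+1,n)$ corresponds to a cycle class (it is, up to sign and shift, the generator $c_{n-1}$). The remaining generators are obtained by applying the Milnor operations $Q_I=Q_{i_1}\cdots Q_{i_k}$ for $I\subset \{0,\ldots,n-1\}$ to $a'$; since the $Q_i$ are stable motivic operations they preserve $H^{*,*'}(-;\bZ/2)$, and a direct computation in the localized ring shows $Q_I(a')$ lands, up to powers of $\tau$, on appropriate $\rho$-monomials hitting the various $c_i$ (reflecting that $Q_i$ shifts bidegree by $(2^{i+1}-1, 2^i-1)$ on $\bZ/2$-motivic cohomology, which in the ambient ring corresponds to multiplying by a specific power of $\tau^{-1}$).

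The main obstacle is the two-sided equality. One direction requires showing each $Q_I(a')$ is in the motivic cohomology (not just in the localization), which follows from stability of $Q_I$ once $a'$ is known to be motivic. The harder direction is proving that the subalgebra generated by $a$, $a'$ and $\Lambda(Q_0,\ldots,Q_{n-1})\{a'\}$ exhausts $H^{*,*'}(M_n;\bZ/2)$. Here I would proceed by a weight-by-weight dimension count in each bidegree $(p,q)$, comparing against the known total rank coming from the étale additive structure extended by $\tau$ and $\tau^{-1}$, or equivalently by base-changing to $\bC$ and using the Rost motive computation over an algebraically closed field to pin down the rank. Matching these ranks with the subalgebra generated by the prescribed classes—and confirming that the only multiplicative relation is the truncation $\rho^{2^{n+1}-1}=0$ together with the $Q_i^2=0$ exterior relations—completes the identification.
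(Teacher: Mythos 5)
This statement is quoted in the paper as Theorem 5.3 of \cite{Ya2} and is not proved here, so there is no internal proof to compare against; judging your proposal on its own terms, it has genuine gaps. First, the embedding of $H^{*,*'}(M_n;\bZ/2)$ into the $\tau$-localization is not something you can get from Beilinson--Lichtenbaum: that theorem controls only the range $*\le *'$ (isomorphism with \'etale cohomology) and $*\le *'+1$ (injectivity), whereas almost all of the content of the theorem concerns bidegrees with $*>*'$ (the classes $a'$, $Q_I(a')$, and the cycle classes $c_i$ all live strictly below the diagonal). Absence of $\tau$-torsion in that region is a conclusion of the computation, not an available input, and invoking ``$\tau$-injectivity is known for the norm variety'' is circular. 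Second, your identification of $a'$ with a cycle class is wrong on bidegrees: $a'=\rho^{n+1}\tau^{-1}$ sits in $H^{n+1,n}$, while $c_{n-1}=\rho^{2^n}\tau^{-2^{n-1}}$ sits in $H^{2^{n+1}-2^n,\,2^n-2^{n-1}}$; these agree only for $n=1$. In general the $c_i$ are recovered as $Q_0\cdots\widehat{Q_i}\cdots Q_{n-1}(a')$, and the existence of $a'$ itself as a motivic class is a nontrivial input coming from Voevodsky's computation of the motivic cohomology of the \v{C}ech simplicial scheme attached to the Pfister quadric (i.e.\ the Milnor-conjecture machinery), not from the Chow ring.

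Third, and most seriously, the completeness step does not work as described. A dimension count against ``the \'etale additive structure extended by $\tau$ and $\tau^{-1}$'' only sees the localization, hence cannot detect classes in bidegrees $*>*'$ that die after inverting $\tau$, nor can it rule out $\tau$-torsion; and base change to $\bC$ splits $M_n$ into Tate motives and kills $\rho$ entirely, so it gives no information about the $\rho$-power classes that make up the bulk of the answer. The actual computation in \cite{Ya2} proceeds instead via the distinguished triangles relating $M_n$ to the \v{C}ech simplicial scheme and its Tate twists, feeding in the known $H^{*,*'}(\mathcal{X};\bZ/2)$ over $\bR$ and chasing the resulting long exact sequences; some such structural input is unavoidable, and your outline has no substitute for it.
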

The following lemma is used in the next sections.
\begin{lemma}   We have 
$Q_0(\tau^{-1})=\rho \tau ^{-2}$.  Hence  $Q_0(a')=\rho a\tau^{-2}, $  while $Q_0(a)=0$.
\end{lemma}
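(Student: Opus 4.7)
The plan is to reduce the lemma to the single identity $Q_0(\tau)=\rho$ in $H^{1,1}(Spec(\bR);\bZ/2)$; after that, every formula follows from the Leibniz rule for the derivation $Q_0=\beta$.

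To establish $Q_0(\tau)=\rho$, I would use that $Q_0$ is the connecting map for $0\to\bZ/2\to\bZ/4\to\bZ/2\to 0$, so $Q_0(\tau)=0$ iff $\tau$ lifts to $H^{0,1}(Spec(\bR);\bZ/4)$. By Beilinson--Lichtenbaum (the Voevodsky theorem), $H^{0,1}(Spec(\bR);\bZ/n)\cong H^0_{\acute{e}t}(Spec(\bR);\mu_n)=\mu_n(\bR)$, and the reduction map is the squaring $\mu_4(\bR)\to\mu_2(\bR)$, i.e.\ $\{\pm 1\}\to\{\pm 1\}$, $x\mapsto x^2=1$, which is the zero map. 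Hence $\tau$ does not lift, so $0\ne Q_0(\tau)\in H^{1,1}(Spec(\bR);\bZ/2)=\bZ/2\{\rho\}$, forcing $Q_0(\tau)=\rho$. By contrast, $\rho$ is the reduction of the integral unit $[-1]\in H^{1,1}(Spec(\bR);\bZ)=\bR^{\times}$, so $Q_0(\rho)=0$, and Leibniz gives $Q_0(a)=Q_0(\rho^{n+1})=0$.

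I then read $Q_0(\tau^{-1})=\rho\tau^{-2}$ as an identity in the ambient ring $\bZ/2[\rho,\tau,\tau^{-1}]/(\rho^{2^{n+1}-1})$, with $Q_0$ extended there as a derivation. From $\tau\cdot\tau^{-1}=1$,
\[ 0=Q_0(\tau)\tau^{-1}+\tau\,Q_0(\tau^{-1})=\rho\tau^{-1}+\tau\,Q_0(\tau^{-1}),\]
so $Q_0(\tau^{-1})=\rho\tau^{-2}$, and then $Q_0(a')=Q_0(a)\tau^{-1}+a\,Q_0(\tau^{-1})=\rho a\tau^{-2}$.

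The delicate point is that $\tau^{-1}$ is not itself a class in $H^{*,*'}(M_n;\bZ/2)$, so the calculation above is a priori only formal. To make it intrinsic I would argue directly inside $H^{*,*'}(M_n;\bZ/2)$ using the relation $\tau a'=a$: Leibniz together with $Q_0(a)=0$ gives $\tau\,Q_0(a')=\rho a'$, and since the embedding of Theorem 10.1 sends $\tau$ to a non-zero-divisor in the Laurent ring, this equation pins down the image of $Q_0(a')$ as $\rho a\tau^{-2}$. The main obstacle is Step 1, correctly identifying $Q_0(\tau)$ over $\bR$; the rest is formal.
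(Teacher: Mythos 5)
Your proof is correct and follows essentially the same route as the paper: the paper's entire proof is the one-line Leibniz computation $0=Q_0(\tau\tau^{-1})=\rho\tau^{-1}+\tau Q_0(\tau^{-1})$, with $Q_0(\tau)=\rho$ taken as known from Voevodsky. Your extra verification of $Q_0(\tau)=\rho$ via non-liftability to $\mu_4(\bR)$, and your care in making the identity intrinsic via $\tau a'=a$, are sound but go beyond what the paper records.
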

\begin{proof}
We see the first equation from
\[ 0=Q_0(1)=Q_0(\tau \tau^{-1})=\rho \tau^{-1}+\tau Q_0(\tau^{-1}).\]
\end{proof}

\begin{lemma} (Lemma 5.13 in \cite{Ya2}
)   Let $X_d$ be anisotropic quadric of the degree $d$.
Let $h\in H^{2,1}(X_d)$ be the hyper plain section.
If $2^n-2<d$, then we have a graded ring isomorphism
\[H^{*,*'}(X_d;\bZ/2)\cong \bZ/2[\rho,\tau,h]\quad 
when \ \ *\le n.\]
In particular, $H^{*,*-1}(X_d:\bZ/2)=0$ 
$mod(Ideal(h))$ for $*\le n$.
\end{lemma}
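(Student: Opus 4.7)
The plan is to reduce the computation to the Rost-motive calculation of Theorem 10.1, exploiting the motivic decomposition of anisotropic quadrics. Under the hypothesis $d\ge 2^n-1$, the motive of $X_d$ admits the Rost motive $M_n$ as a direct summand, with the complement built out of Tate twists corresponding to the powers $1,h,h^2,\ldots$ of the hyperplane class; this is classical for the norm quadric $d=2^n-1$ (Rost), and propagates to larger $d$ in the low-bidegree range via the inclusion of quadrics of different dimensions.

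From this splitting one obtains
\[
H^{*,*'}(X_d;\bZ/2)\;\cong\;H^{*,*'}(M_n;\bZ/2)\;\oplus\;\bigoplus_{i\ge 1}\bZ/2[\rho,\tau]\cdot h^i
\]
in the range where the eventual truncation $h^{d+1}=0$ is invisible, which certainly covers $*\le n<d$. Inspecting Theorem 10.1, every multiplicative generator of $H^{*,*'}(M_n;\bZ/2)$ that is not already in $\bZ/2[\rho,\tau]$ — namely $a=\rho^{n+1}$, $a'=a\tau^{-1}$, and the Milnor-operation descendants $Q_{i_1}\cdots Q_{i_k}(a')$ — carries first bidegree at least $n+1$. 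Hence in the range $*\le n$ the Rost summand collapses to $\bZ/2[\rho,\tau]$, and together with the Tate summands this reads off the graded ring isomorphism $H^{*,*'}(X_d;\bZ/2)\cong\bZ/2[\rho,\tau,h]$ asserted in the lemma.

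For the ``in particular'' clause, a monomial $\rho^a\tau^b h^c$ sits in bidegree $(a+2c,\,a+b+c)$, so membership in $H^{*,*-1}$ forces $b+1=c$; this requires $c\ge 1$, so every class in $H^{*,*-1}(X_d;\bZ/2)$ for $*\le n$ is divisible by $h$.

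The main obstacle will be pinning down the motivic splitting of $X_d$ for general $d\ge 2^n-1$ (and not only for the norm quadric $d=2^n-1$). One approach is to use the chain of embeddings of quadrics of increasing dimension, combined with the fact that the Rost motive controls the ``small-slope'' part of the motivic cohomology; alternatively one invokes Vishik's/Voevodsky's motivic decomposition of a norm-dominated anisotropic quadric. Once the decomposition is in hand, the degree-bookkeeping above is purely formal.
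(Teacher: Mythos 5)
First, note that the paper does not actually prove this lemma: it is imported verbatim as Lemma 5.13 of [Ya2], so there is no internal proof to compare against, only the method of that reference --- which is indeed the one you propose, namely reduction to the Rost-motive computation of Theorem 10.1 via a motivic decomposition followed by degree bookkeeping. Your verification of the ``in particular'' clause is correct: a monomial $\rho^a\tau^b h^c$ has bidegree $(a+2c,\,a+b+c)$, so lying in $H^{*,*-1}$ forces $b=c-1$ and hence $c\ge 1$.

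There are, however, two genuine problems with the reduction itself. The smaller one: even for the norm quadric $d=2^n-1$, the complement of $M_n$ in $M(Q^{2^n-1})$ is not a sum of Tate twists of the unit motive but $M_{n-1}\otimes M(\tilde\bP^{\,2^{n-1}-1})$ (see Section 12 of the paper), so the summand you write as $\bigoplus_{i\ge 1}\bZ/2[\rho,\tau]\cdot h^i$ is really $\bigoplus_{i\ge 1}H^{*-2i,*'-i}(M_{n-1};\bZ/2)$. This happens to be harmless in the range $*\le n$, because the generators of $H^{*,*'}(M_{n-1};\bZ/2)$ outside $\bZ/2[\rho,\tau]$ sit in first degree $\ge n$ and are shifted up by $2i\ge 2$, but that check is part of the proof and must be made. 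The larger problem is precisely the step you flag as the ``main obstacle'': a general anisotropic quadric of dimension $d>2^n-2$ over a general field does not split off a Rost motive, and neither a ``chain of embeddings'' (restrictions go the wrong way for this) nor an appeal to a generic ``norm-dominated'' decomposition supplies one. What rescues the lemma --- and what your argument never invokes --- is that the ground field is $\bR$: every anisotropic quadratic form over $\bR$ is a sum of squares, hence excellent, and the motive of an excellent quadric decomposes completely into shifted Rost motives $M_m$ ([Ro], [Ya1], [Ya2]); the Tate shifts together with the bound ``new generators of $M_m$ begin in first degree $m+1$'' then yield $\bZ/2[\rho,\tau,h]$ in degrees $*\le n$. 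Without using $k=\bR$ and excellence, the reduction to Theorem 10.1 is not established, so the central step of your proof is a genuine gap rather than a deferred technicality.
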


\section{The cohomology of quadrics with coefficients in $\bZ_2$}

In this section we consider integral coefficients case.
In this paper, the $2$-adic  integral $\bZ_2$ cohomology means the inverse limit
\[ H_{\acute{e}t}^*(M;\bZ_2)= 
Lim_{\infty \gets s
}H^{*,*}(M;\bZ/2^s)\]
of motivic cohomologies.

We recall here the Lichtenberg cohomology
[Vo1,2]
such that 
\[ H_L^{*,*'}(X;\bZ)\cong H^{*,*'}(X;\bZ)\quad for\ *\le *'+1.\]
(The right side is the motivic cohomology.)
By the five lemma, we see (for $1/s\in k$)
\[ H_L^{*,*'}(X;\bZ/s)\cong H^{*,*'}(X;\bZ/s)
\quad  for\ \ *\le *'.\]
Moreover we have
$H_L^{2*,*'}(X;\bZ/s)\cong
H^{2*}_{\acute{e}t}(X;\mu_{s}^{* '\otimes })$.

%　In this paper we consider the cycle maps to this Lichtenberg (or motivic) cohomology instead of the \'etale cohomology itself.  We define the cycle map

In this paper we consider the cycle maps to this Lichitenberg
(or motivic) cohomology in stead of  the \'etale cohomology itself.  The cycle map is written
\[ cl: CH^*(X)\otimes \bZ_2\cong H^{2*,*}(X;\bZ_2 )\to H_L^{2*,*}(X;\bZ_2)\cong H_{\acute{e}t}^{2*}(X;\bZ_2(*))\]
where $\bZ(*)$ is the Galois module, when $k=\bR$,
 it acts as $(-1)^{*}$.  
Here we can write  \ \ 
\[H^{2*}_{\acute{e}t}(X;\bZ_2(*))
=\oplus_{m\ge 0}(H^{4m}_{\acute{e}t}X;\bZ_2)\oplus
H^{4m+2}_{\acute{e}t}(X;\bZ_2(1)).  \]
Note that it is the (graded) ring.

Let $k=\bR$. Moreover let $*=even$.  Then the right hand side cohomology is written
\[ H^{2*}_{\acute{e}t}(X;\bZ_2(*))\cong  H^{2*}_{\acute{e}t}(X;\bZ_2(even))\cong
H^{2*}_{\acute{e}t}(X;\bZ_2(2*))\]
\[ \cong H^{2*,2*}_{L}(X;\bZ_2)\cong H^{2*,2*}(X;\bZ_2).\]
Similarly, when $*=odd$, we see
$ H^{2*}_{\acute{e}t}(X;\bZ_2(*))\cong 
 H^{2*,2*+1}(X;\bZ_2).$

Thus in this paper, the cycle map means ;
\[ cl: CH^*(X)\otimes \bZ_2\to H_{\acute{e}t}^{2*}(X;\bZ_2(*))\cong \begin{cases}
   H^{2*,2*}(X;\bZ_2)\quad for\ *=even\\
 H^{2*,2*+1}(X;\bZ_2)\quad for\ *=odd.\end{cases}.
\]
We say that $x\in H_{\acute{e}t}^{2*}(X;\bZ(*)$ is non-algebraic if $x\not =0 \ mod(Im(cl))$.

The short exact sequence $0 \to \bZ \stackrel{2}{\to} \bZ\to \bZ/2\to 0$ induces the  long exact sequence of motivic cohomology
\[ ... \to H^{*-1,*}(M;\bZ/2)\stackrel{\delta}{\to} H^{*,*}(M;\bZ)
    \stackrel{2}{\to}
H^{*,*}(M;\bZ)\stackrel{r}{\to}
 H^{*,*}(M;\bZ/2) 
\to ... \] 
By Voevodsky [Vo1], [Vo2]), it is known $\beta(\tau)=\rho$
for the Bockstein operation $\beta$.
Let us write  $\delta (\tau \rho^{i-1})=\bar \rho_i\in 
H^{*,*}(M;\bZ)$ so that 
$r(\bar \rho_i)=\beta(\tau \rho^{i-1})=\rho^i$
since $r\delta=\beta$. 
Moteover  $\bar \rho_i$ is $2$-torsion from the above exact sequence.

Hence for all $1\le c\le 2^{n+1}-2$, we see 
$H^{c,c}(M;\bZ)\not =0$.
The same fact holds each
$H^c_{\acute{e}t}(M;\bZ/2^s)$ and so $H^c_{\acute{e}t}(M;\bZ_2)$.

\begin{lemma} Let $N=2^{n+1}-2$.  Then
\[ \bZ_2\{1,cl(c_0)\}\oplus \bZ/2\{\bar \rho_1,...,\bar \rho_{N}\}\subset 
H^*_{\acute{e}t}(M;\bZ_2)\oplus H^*_{\acute{e}t}(M;\bZ_2(1)).\] 
The element $\bar \rho_c$ with 
$c=0\ mod(4)$ 
 and $c\not = 2^{n+1}-2^{i+1}$ is  a non-algebraic element
(i.e., not in the image of the cycle map).
\end{lemma}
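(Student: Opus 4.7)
The plan is to handle the two assertions of the lemma in sequence: first verify the stated direct-sum inclusion, then establish the non-algebraicity claim.

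For the inclusion, I would separate the free and $2$-torsion parts. The unit $1$ has infinite $2$-adic order. For $cl(c_0)$, I would invoke Rost's structure theorem for the Rost motive $M$ to know that $c_0 \in CH^{2^n-1}(M)$ is $\bZ$-free; combined with the fact that the mod $2$ reduction $cl/2(c_0) = \rho^{2^{n+1}-2}$ is nonzero in $H^*_{\acute{e}t}(M;\bZ/2)\cong \bZ/2[\rho]/(\rho^{2^{n+1}-1})$, this forces $cl(c_0) \notin 2\cdot H^*_{\acute{e}t}(M;\bZ_2)$ and hence $cl(c_0)$ generates a $\bZ_2$-free summand. For each $\bar\rho_c$ with $1\le c\le N$, the Bockstein exact sequence recalled in the excerpt gives $2\bar\rho_c = 0$, while $r(\bar\rho_c) = \beta(\tau\rho^{c-1}) = \rho^c$; since the monomials $\rho^c$, $1\le c\le N = 2^{n+1}-2$, are $\bZ/2$-linearly independent in $\bZ/2[\rho]/(\rho^{2^{n+1}-1})$, the $\bar\rho_c$ are themselves $\bZ/2$-linearly independent. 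Since the $\bZ_2$-free part and the $2$-torsion part intersect trivially, I conclude the stated direct sum injects.

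For the non-algebraicity, I would compare bidegrees. The class $\bar\rho_c$ lies in bidegree $(c,c)$ of $H^{*,*}(M;\bZ_2)$. For $c \equiv 0 \pmod 4$, writing $c = 2m$ with $m$ even, the convention recalled in the excerpt places the cycle map $cl: CH^m(M)\otimes\bZ_2 \to H^{2m,2m}(M;\bZ_2)$ in the same bidegree as $\bar\rho_c$. Since $CH^*(M)\otimes\bZ_2$ is concentrated in codimensions $0$ and $2^n - 2^i$ for $0\le i\le n-1$, the image of $cl$ in bidegree $(c,c)$ can be nonzero only when $c = 2(2^n-2^i) = 2^{n+1}-2^{i+1}$ for some $i$. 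By hypothesis this fails, so $Im(cl)$ vanishes in bidegree $(c,c)$ while $\bar\rho_c \ne 0$, yielding non-algebraicity.

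The hard part will be controlling the integral, rather than mod $2$, structure of the Chow group: one must verify that $CH^*(M)\otimes\bZ_2$ is really supported on the stated codimensions and that $c_0$ gives a $\bZ_2$-free generator whose cycle class has infinite $2$-adic order. This should follow from Rost's description of the motivic structure of $M$ together with the injectivity of $cl/2$ recorded in the excerpt, but the $\bZ_2$-lifting does take some care. Once this is in place, the parity condition $c \equiv 0 \pmod 4$ is exactly what ensures that $\bar\rho_c$ and any potentially obstructing cycle class lie in the same untwisted $\bZ_2$-weight component, so that the vanishing of $Im(cl)$ in the relevant bidegree translates directly into non-algebraicity of $\bar\rho_c$.
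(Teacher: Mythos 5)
Your argument is correct and follows the same route the paper (implicitly) takes: the lemma is stated there without a separate proof, resting on the Bockstein construction of the classes $\bar\rho_c$ (two-torsion, with $r(\bar\rho_c)=\rho^c$ linearly independent in $\bZ/2[\rho]/(\rho^{2^{n+1}-1})$) and on Rost's computation of $CH^*(M)$, which is exactly what you use. In particular, your observation that the hypotheses $c\equiv 0\ mod(4)$ and $c\not=2^{n+1}-2^{i+1}$ force the source $CH^{c/2}(M)\otimes\bZ_2$ of the cycle map to vanish in the relevant untwisted component, so that $\bar\rho_c\not=0$ cannot be algebraic, is the intended proof of the second assertion.

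One step is a non sequitur as written: from $cl/2(c_0)=\rho^{2^{n+1}-2}\not=0$ you conclude $cl(c_0)\notin 2\cdot H^*_{\acute{e}t}(M;\bZ_2)$ and ``hence'' that $cl(c_0)$ generates a free $\bZ_2$-summand. Indivisibility by $2$ does not give infinite order: each $\bar\rho_c$ is itself indivisible by $2$ (its reduction $\rho^c$ is nonzero) yet is killed by $2$. The repair, which you only gesture at under ``the hard part,'' is to restrict to $\bar M=M_{\bar k}$: there the Rost motive splits into Tate motives, $H^{2*}_{\acute{e}t}(\bar M;\bZ_2)$ is torsion free, and $c_0$ restricts to a nonzero class, so $cl(c_0)$ cannot be torsion. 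With that supplied, your proof is complete.
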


{\bf Remark.}
When $c=2\ mod(4)$, the element
$ \bar \rho_c \in H^c(M;\bZ_2)$ but not in 
$H^c(M;\bZ_2(1)).$  So we identify here $\bar \rho_c$ is not in
$H_{\acute{e}t}^{2*}(M;\bZ_2(*)).$

Writing $\pi=cl(c_0)$, we have the following theorem.
\begin{thm} ([Ya7])
Let $M_n\subset Q^{2^n-1}$ be the Rost motive of the 
norm variety.  Then there are element 
$\pi\in H^{2^{n+1}-2}_{\acute{e}t}(M_n; \bZ_2(1))$ and 
$\bar \rho_{4m}\in
H^{4m}_{\acute{e}t}(M_n;\bZ_2(0))$ such that 
\[ H^{2*}_{\acute{e}t}(M_n;\bZ_2(*))\cong
\bZ_2\{1,\pi\}\oplus \bZ/2\{\bar \rho_4,
\bar \rho_8,...,\bar \rho_{2^{n+1}-4} \}\]
\[ \cong
\bZ_2\{1,\pi\}\oplus \bZ/2[\bar \rho_4]^+/(
\bar \rho_4^{2^{n-1}})
%\cong
%\bZ_2[\pi,\bar \rho_4]/(\pi^2,\pi\bar \rho_4,2\bar \rho_4,
%\bar \rho_4^{2^{n-1}}).
.\]
The image of the cycle map is given
\[CH^*(M_n)\otimes \bZ_2\cong
\bZ_2\{1,\pi\}\oplus \bZ/2\{\bar \rho_{2^{n+1}-2^{n}},
\bar \rho_{2^{n+1}-2^{n-1}},...,\bar \rho_{2^{n+1}-4} \} .\]
\end{thm}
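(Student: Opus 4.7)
The plan is to compute $H^{2*}_{\acute{e}t}(M_n;\bZ_2(*))$ bidegree by bidegree via the Bockstein long exact sequence
\[ \cdots \to H^{a-1,b}(M_n;\bZ/2) \stackrel{\delta}{\to} H^{a,b}(M_n;\bZ_2) \stackrel{\cdot 2}{\to} H^{a,b}(M_n;\bZ_2) \stackrel{r}{\to} H^{a,b}(M_n;\bZ/2) \to \cdots, \]
applied in the two bidegree ranges $(4m,4m)$ (for $* = 2m$) and $(4m+2, 4m+3)$ (for $* = 2m+1$) that enter the definition of $H^{2*}_{\acute{e}t}(-;\bZ_2(*))$ recalled just before the theorem. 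The image of the cycle map is then read off from the Chow-ring formula $c_i = \rho^{2^{n+1}-2^{i+1}}\tau^{-2^n+2^i}$ quoted there.

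First, from Theorem 10.1 I would list the $\bZ/2$-generators of $H^{a,b}(M_n;\bZ/2)$ in the bidegrees needed. Modulo $\rho^{2^{n+1}-1} = 0$, the only monomials with $b - a \in \{-1,0,1\}$ and $0 \le a \le 2^{n+1}-2$ are $\tau\rho^{a}$, $\rho^a$, and $\tau\rho^{a-1}$ respectively; one checks from the bidegrees $|a| = (n+1,n+1)$, $|a'| = (n+1,n)$, and $|Q_i| = (2^{i+1}-1, 2^i - 1)$ that no $Q_I(a')$-generator lands in these bidegrees. This bidegree bookkeeping, in particular ruling out contributions from the exterior generators, is the main obstacle of the proof.

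Second, I would identify the $\bZ_2$-free part. The class $1$ is tautological, and $\pi := cl(c_0) \in H^{2^{n+1}-2,\,2^{n+1}-1}(M_n;\bZ_2)$ is the image of the top Chow class of the Rost motive, which is $\bZ$-torsion-free and has mod-$2$ reduction $\tau\rho^{2^{n+1}-2} \ne 0$. For the torsion I set $\bar\rho_{4m} := \delta(\tau\rho^{4m-1}) \in H^{4m,4m}(M_n;\bZ_2)$ for $1 \le m \le 2^{n-1}-1$; this is $2$-torsion and nonzero since $r(\bar\rho_{4m}) = \beta(\tau\rho^{4m-1}) = \rho^{4m} \ne 0$. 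By the exact sequence and step one, these exhaust the torsion in $H^{4m,4m}(M_n;\bZ_2)$, and no $\bZ/4$ extensions arise (every integral class beyond $1,\pi$ is $2$-torsion because the free rank has already been accounted for). For odd-$*$ bidegrees $(4m+2,4m+3)$ with $m < 2^{n-1}-1$, the candidate $\tau\rho^{4m+2}$ fails to lift because its mod-$2$ Bockstein $\rho^{4m+3}$ is nonzero; only $m = 2^{n-1}-1$ gives the lift $\pi$.

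Finally, the multiplicative identity $\bar\rho_4^k = \bar\rho_{4k}$ follows because $r(\bar\rho_4^k) = \rho^{4k}$ and $r$ is injective on the $2$-torsion classes under consideration (by exactness, its kernel is the image of multiplication by $2$, which vanishes on torsion); in particular $\bar\rho_4^{2^{n-1}}$ reduces to $\rho^{2^{n+1}} = 0$, giving the vanishing relation. For the cycle-map image, $cl(c_0) = \pi$ by construction, and for $1 \le i \le n-1$ the formula $cl(c_i) \equiv \rho^{2^{n+1}-2^{i+1}} \pmod 2$ together with injectivity of $r$ on torsion identifies $cl(c_i)$ with $\bar\rho_{2^{n+1}-2^{i+1}}$; this yields the stated image $\bZ_2\{1,\pi\} \oplus \bZ/2\{\bar\rho_{2^{n+1}-2^n}, \ldots, \bar\rho_{2^{n+1}-4}\}$.
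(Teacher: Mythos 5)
Your proposal is correct and follows essentially the same route the paper takes (the theorem itself is quoted from [Ya7], but \S 11 sets up exactly this argument): the Bockstein long exact sequence for $0\to\bZ\to\bZ\to\bZ/2\to 0$ applied to the mod~$2$ computation of Theorem 10.1, with $\bar\rho_{4m}=\delta(\tau\rho^{4m-1})$, $r(\bar\rho_{4m})=\rho^{4m}$, the classes in degrees $\equiv 2\ mod(4)$ excluded by the twist mismatch as in Lemma 11.1 and the Remark following it, and the cycle-map image read off from $cl(c_i)=\rho^{2^{n+1}-2^{i+1}}$. The one under-justified point is your parenthetical ruling out $\bZ/2^s$-summands ($s\ge 2$) in degree $4m$ --- knowing the free rank does not accomplish this --- but the fix is immediate by Nakayama: $H^{4m,4m}(M_n;\bZ_2)/2\hookrightarrow \bZ/2\{\rho^{4m}\}$ is already generated by the image of the $2$-torsion class $\bar\rho_{4m}$, so the whole (complete) group is the cyclic group $\bZ/2\{\bar\rho_{4m}\}$.
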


\section{norm varieties}

Let $X=Q^{2^n-1}$ be the norm variety, and $M_n$
be its Rost motive.  We have the decomposition of motives
(\cite{Ro}, $\S 6$ in[Ya1])
\[M(Q^{2^n-1})\cong M_{n}\oplus M_{n-1}\otimes M(\tilde \bP^{2^{n-1}-1})
\]
where  \ $M(\tilde \bP^s)\cong \bT\oplus...
\oplus \bT^{s\otimes}$.

Hence we have the additive structure from Theorem 11.2
in the preceding section.  More strongly, we can prove

\begin{thm}  \cite{YaC}
 We have a  ring isomorphism 
\[ H^{2*}_{\acute{e}t}(Q^{2^n-1};\bZ_2(*))
\cong \bZ_2[h,\bar \rho_4]/(h^{2^n}, 2\bar \rho_4, h\bar \rho_4^{2^{n-2}}, \bar \rho_4h^{2^{n-1}},
 \bar \rho_4^{2^{n-1}}).\]
Here $h\in H^2(Q^{2^n-1};\bZ_2(1))$ is the hyper plain section, and we can take  $\pi=h^{2^n-1}$. (The ring is generated by only two elements.)
\end{thm}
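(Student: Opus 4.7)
The plan is to combine the motivic decomposition $M(Q^{2^n-1})\cong M_n\oplus M_{n-1}\otimes M(\tilde\bP^{2^{n-1}-1})$ recalled at the start of the section with the additive description of $H^{2*}_{\acute{e}t}(M_n;\bZ_2(*))$ supplied by Theorem 11.2, and then identify the multiplicative structure generator by generator.

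First I would fix the two ring generators: the hyperplane section $h\in H^{2}_{\acute{e}t}(X;\bZ_2(1))$, which is natural on any smooth projective variety, and the class $\bar\rho_4\in H^{4}_{\acute{e}t}(X;\bZ_2(0))$ pulled back from the $M_n$-summand through the canonical projector. Since $M(\tilde\bP^{2^{n-1}-1})$ is a sum of Tate twists, the second summand of the motivic decomposition contributes only classes of the form $h^i$ times classes coming from $M_{n-1}$, and Theorem 11.2 applied to both $M_n$ and $M_{n-1}$ then shows additively that every class in $H^{2*}_{\acute{e}t}(X;\bZ_2(*))$ is a $\bZ_2$-combination of monomials $h^i\bar\rho_4^j$. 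A direct bidegree-by-bidegree monomial count in $\bZ_2[h,\bar\rho_4]/J$, with $J$ the stated ideal (and cross-checked for $n=2,3$), shows this ring has the same additive rank as $H^{2*}_{\acute{e}t}(X;\bZ_2(*))$ in each graded piece.

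Next I would check the individual relations. The relation $h^{2^n}=0$ is dimensional, since $\dim X=2^n-1$, and comparing generators of the free top $\bZ_2$-summand forces $\pi=h^{2^n-1}$ up to a unit. The relations $2\bar\rho_4=0$ and $\bar\rho_4^{2^{n-1}}=0$ transport directly from Theorem 11.2 via the projector onto $M_n$.

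The main obstacle is the two mixed relations $h\bar\rho_4^{2^{n-2}}=0$ and $\bar\rho_4 h^{2^{n-1}}=0$, which are invisible inside a single summand of the motivic decomposition. To prove them I would proceed by bidegree: compute the bidegree in which each product lives, then use $H^*(X)=H^*(M_n)\oplus\bigoplus_i H^{*-2i}(M_{n-1})(-i)$ together with Theorem 11.2 to enumerate all surviving classes in that bidegree and verify that none can realise the product in question. Concretely, once the $\bZ/2[\bar\rho_4]^+$-tower of $M_n$ is exhausted at exponent $2^{n-1}-1$, multiplication by $h$ (respectively $h^{2^{n-1}}$) pushes the product into a bidegree where the only surviving classes from $M_{n-1}\otimes M(\tilde\bP^{2^{n-1}-1})$ are pure $h$-powers, forcing the mixed monomial to vanish. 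Once both mixed relations are established, the tautological surjection $\bZ_2[h,\bar\rho_4]/J\twoheadrightarrow H^{2*}_{\acute{e}t}(X;\bZ_2(*))$ is between finitely generated $\bZ_2$-modules of equal rank in every bidegree, hence an isomorphism.
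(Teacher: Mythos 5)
Your overall strategy---the motivic decomposition $M(Q^{2^n-1})\cong M_n\oplus M_{n-1}\otimes M(\tilde \bP^{2^{n-1}-1})$, the additive input from Theorem 11.2 applied to both Rost summands, identification of the torsion-free part with $\bZ_2[h]/(h^{2^n})$ via the restriction to $X(\bC)$, and a rank comparison with the presented ring---is exactly the route the paper takes in its outline for $Q^7$. The relations $h^{2^n}=0$, $\pi=h^{2^n-1}$, $2\bar\rho_4=0$ and $\bar\rho_4^{2^{n-1}}=0$ are obtained the same way in both.

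The gap is in your treatment of the two mixed relations, which you correctly single out as the main obstacle but then dispose of with a claim that is false. You assert that in the bidegree of $h\bar\rho_4^{2^{n-2}}$ (resp.\ $\bar\rho_4 h^{2^{n-1}}$) the only surviving classes coming from $M_{n-1}\otimes M(\tilde \bP^{2^{n-1}-1})$ are pure $h$-powers, so that the product is forced to vanish. Take $n=3$: the class $h\bar\rho_4^{2}$ sits in degree $10$, where the additive decomposition gives $\bZ_2\{c_0'h^2\}\oplus\bZ/2\{c_1'h^3\}=\bZ_2\{h^5\}\oplus\bZ/2\{h^3\bar\rho_4\}$; the torsion summand is itself a mixed monomial, not a pure $h$-power. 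The enumeration therefore only yields $h\bar\rho_4^2\in\{0,\ h^3\bar\rho_4\}$ and does not force vanishing. Likewise $\bar\rho_4h^{4}$ lives in degree $12$ alongside the nonzero torsion class $\bar\rho_{12}=\bar\rho_4^{3}$ from the $M_3$-summand, so a priori it could equal $\bar\rho_4^3$. Some additional input is needed to separate these possibilities (the paper's outline simply asserts the identifications $c_2=\bar\rho_4^2$, $c_1=\bar\rho_4^3$, $hc_1'=h\bar\rho_4$ and defers the full argument to \cite{YaC}); without it your concluding step fails, because the surjection $\bZ_2[h,\bar\rho_4]/J\twoheadrightarrow H^{2*}_{\acute{e}t}(X;\bZ_2(*))$ is not yet known to be well defined, and the equality of ranks in each degree cannot be invoked.
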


We give only an outline of the proof  for $Q^7$ here, for ease of arguments.

\begin{lemma}  We have a  ring isomorphism 
\[ H^{2*}_{\acute{e}t}(Q^7;\bZ_2(*))
\cong 
\bZ_2[h]/(h^8)
\oplus \bZ/2[h]/(h^4)\{\bar \rho_4\}\otimes \bZ/2\{\bar \rho_4^2,\bar \rho_4^3\}\]
\[  \cong \bZ_2[h,\bar \rho_4]/(h^8, 2\bar \rho_4,  
h^4\bar \rho_4, h\bar \rho_4^2, \bar \rho_4^4)\]
where $h^7=c_0=\pi, \ c_1=\bar \rho_4^3, \ c_2=\bar \rho_4^2$
and $c_1'h=h\bar \rho_4$.
Hence we have 
\[H^{2*}_{\acute{e}t}(Q^7;\bZ_2(*))/(Im(cl))\cong \bZ/2\{\bar \rho_4\}.\]
\end{lemma}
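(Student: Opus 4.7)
The plan is to compute additively via the motivic decomposition, then pin down the ring structure by comparing generators and relations. First I would use the splitting $M(Q^7) \cong M_3 \oplus M_2 \otimes M(\tilde{\bP}^3)$ with $M(\tilde{\bP}^3) \cong \bT \oplus \bT^{\otimes 2} \oplus \bT^{\otimes 3}$. Theorem 11.2 then gives
\[
H^{2*}_{\acute{e}t}(M_3; \bZ_2(*)) \cong \bZ_2\{1, \pi_3\} \oplus \bZ/2\{\bar{\rho}_4, \bar{\rho}_4^2, \bar{\rho}_4^3\}
\]
and
\[
H^{2*}_{\acute{e}t}(M_2; \bZ_2(*)) \cong \bZ_2\{1, \pi_2\} \oplus \bZ/2\{\bar{\rho}_4\},
\]
and summing these contributions through the three Tate shifts in $M(\tilde{\bP}^3)$ yields graded ranks $(1,1,2,2,3,2,2,1)$ in degrees $(0,2,4,6,8,10,12,14)$, matching the abelian group underlying the claimed ring.

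Next I would identify the multiplicative generators. The hyperplane section $h$ furnishes powers $h^k$ for $0 \le k \le 7$: the classes $1, h, h^2, h^3$ correspond to the units of $M_3$ and the three $M_2 \otimes \bT^{\otimes k}$ summands respectively; $h^4, h^5, h^6$ correspond to the $\pi_2$-classes in $M_2 \otimes \bT^{\otimes k}$ for $k=1,2,3$; and $h^7 = \pi_3$. The class $\bar{\rho}_4 = \delta(\tau \rho^3) \in H^{4,4}(Q^7; \bZ_2)$ sits in the $M_3$ summand as the unique non-algebraic $\bZ/2$-torsion generator in degree $4$.

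The relations $h^8 = 0$ (since $\dim Q^7 = 7$), $2\bar{\rho}_4 = 0$ (since $\bar{\rho}_4$ is a Bockstein image), and $\bar{\rho}_4^4 = 0$ (from Theorem 11.2 applied to $M_3$) are immediate. For the cross-relations $h\bar{\rho}_4^2 = 0$ and $h^4 \bar{\rho}_4 = 0$, I would note that both products lie in degrees where the torsion piece ($h^3 \bar{\rho}_4$ in degree $10$, $\bar{\rho}_4^3$ in degree $12$) is supported in a motivic summand not reachable from the source: $\bar{\rho}_4^2$ and $\bar{\rho}_4^3$ are concentrated on $M_3$, while multiplication by $h$ displaces classes into $M_2 \otimes M(\tilde{\bP}^3)$. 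A cleaner route is to reduce modulo $2$ and use Theorem 10.1 and Lemma 10.3 to show the products vanish in the mod-$2$ cohomology of the Rost motive, then lift using that $\bar{\rho}_4$ is $2$-torsion.

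Finally, for the cycle-map quotient, the Chow ring computation shows $\Img(cl)$ is generated by $1, h, \ldots, h^7$ together with $cl(c_1) = \bar{\rho}_4^3$ and $cl(c_2) = \bar{\rho}_4^2$ lifted from $M_3$, so the sole non-algebraic class is $\bar{\rho}_4$ itself, giving $H^{2*}_{\acute{e}t}(Q^7; \bZ_2(*))/\Img(cl) \cong \bZ/2\{\bar{\rho}_4\}$. The main obstacle I anticipate is the cross-relation step: since $h$ does not respect the motivic decomposition as a ring homomorphism, one must carefully track the $\rho$- and $\tau$-structure on the Rost motive, and the mod-$2$ reduction argument leans on detailed information about the motivic cohomology of quadrics.
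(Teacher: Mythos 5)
Your proposal follows essentially the same route as the paper's own proof: the motivic splitting $M(Q^7)\cong M_3\oplus M_2\otimes M(\tilde \bP^{3})$, Theorem 11.2 applied to the two Rost motives, identification of the torsion-free part with $1,h,\dots,h^7$ (via $c_0'h^k\lra h^{3+k}$ and $h^7=c_0$) and of $c_2,\ c_1,\ c_1'h$ with $\bar\rho_4^2,\ \bar\rho_4^3,\ h\bar\rho_4$, and the Chow-ring comparison for the cycle-map quotient. Your additive count (ranks $(1,1,2,2,3,2,2,1)$ in degrees $0,\dots,14$) matches the paper's, and the cross-relations $h^4\bar\rho_4=h\bar\rho_4^2=0$ that you rightly single out as the delicate step are not actually verified in the paper's argument either (it is explicitly only an outline); your suggested fix by reducing mod $2$ and invoking Theorem 10.1 and Lemma 10.3 is the natural way to close that gap, whereas the summand-support heuristic should be discarded for the reason you yourself give.
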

\begin{proof}
From the decomposition of
the motive, we see (additively)
\[ H^{2*}_{\acute{e}t}(Q^7;\bZ_2(*))\cong 
H^{2*}(M_3;\bZ_2(*))\oplus H^{2*}(M_2;\bZ_2(*))\otimes \bZ_2\{h,h^2,h^3\}
.\]
Hence it can be written additively  (with $ |c_0|=14$, $|c_1|=12$, 
$|c_2|=8,$ 
$|c_0'|=6$, $|c_1'|=4$) 
\[ (\bZ_2\{1,c_0\}\oplus \bZ/2\{\bar \rho_4,c_1,c_2\})
\oplus (\bZ_2\{1,c_0'\}\oplus \bZ/2\{c_1'\})\otimes 
\bZ_2\{h,h^2,h^3\}.\]

It is well known  (for $\bar X=X(\bC)$)
\[H^*(\bar X;\bZ_2)\cong \bZ_{2}[h,y]/(h^8,2y=h^4,y^2). \]
Hence, from the restriction map,  the ring $H^*(X;\bZ_2)
\supset Z_2[h]/(h^8)$.

%The multiplicative structure of the Chow ring (but not the \'etale cohomology) is known in (Theorem 1.1) in [Ya1].
%it is generated as a ring by elements $h$,$c_1,c_2$ and some element 
%$u_1$ related as $c_1'$.  

First note 
\[\bZ_2\{c_0'h,c_0'h^2,c_0'h^3\}\cong \bZ_2\{h^4,h^5,h^6\}. \]
Thus we have 
\[ \bZ_2\{1,h,...,h^7\}\cong \bZ_2\{1,h,h^2,h^3,hc_0',h^2c_0',h^3c_0',c_0\}.\]
So we have the above $H^{2*}(Q^7;\bZ_2(*))$ is isomorphic to
\[ \bZ_2[h]/(h^8)\oplus \bZ/2\{\bar \rho_4,c_2,c_1\}\oplus
\bZ_2\{c_1'h,c_1'h^2,c_1'h^3\}.\]
Taking $c_2=\bar \rho_4^2, \ c_1=\bar \rho_4^3,\ hc_1'=h\bar \rho_4$,
we have the result.
\end{proof}

We want to see the following theorem.

\begin{thm}
Let $X_n=Q^{2^n-1}$, $n\ge 2$ the norm variety.  Then
\[ DH^{2*}(X_n;\bZ_2(*))=0.\]
\[H_{ur}^{2*}(X_n;\bZ_2(*))\supset  \bZ/2[\bar \rho_4]/(
\bar\rho_4^{2^{n-1} }).\]
Hence for $n\not =n'$, we see that $X_n$ and $X_{n'}$ are not retract rationally equivaliant.
\end{thm}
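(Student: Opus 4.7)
The strategy is to combine the explicit ring presentation from Theorem~12.1,
\[ R := H^{2*}_{\acute{e}t}(X_n;\bZ_2(*)) \cong \bZ_2[h,\bar\rho_4]/(h^{2^n},\,2\bar\rho_4,\,h\bar\rho_4^{2^{n-2}},\,\bar\rho_4 h^{2^{n-1}},\,\bar\rho_4^{2^{n-1}}), \]
with a monomial-by-monomial coniveau analysis. The hyperplane class $h$ equals $i_*(1)$ for the smooth embedding $i:Q^{2^n-2}\hookrightarrow Q^{2^n-1}$, hence $h\in\tilde N^1$, and Frobenius reciprocity (Lemma~2.2) propagates this to every element of the principal ideal $(h)\subset R$. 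Each power $\bar\rho_4^k$ with $k\ge 2$ is algebraic --- by Theorem~12.1 (cf. its model case Lemma~12.2) it realizes a Chern class $c_i$ on the Rost summand $M_n$ of the motivic decomposition $M(Q^{2^n-1})\cong M_n\oplus M_{n-1}\otimes M(\tilde\bP^{2^{n-1}-1})$ --- and algebraic cycle classes on smooth projective varieties in characteristic zero lie in $\tilde N^{*}$ via resolution of singularities applied to a supporting subvariety. Therefore $\tilde N^1 R$ already contains every monomial $h^i\bar\rho_4^j$ except $1$ and $\bar\rho_4$ itself.

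Since $1\notin N^1$ trivially and $\bar\rho_4\notin N^1$ by the Remark preceding the theorem, we conclude $N^1R\subset\tilde N^1R$ and so $DH^{2*}(X_n;\bZ_2(*))=0$. The load-bearing step is verifying $\bar\rho_4\notin N^1$; I would check this using the Bloch--Ogus spectral sequence together with Theorem~10.1 (structure of $H^{*,*}(M_n;\bZ/2)$), showing that $\bar\rho_4=\delta(\tau\rho^3)$ restricts nontrivially to every nonempty Zariski open via the non-vanishing of $\rho^{2^{n+1}-2}$ on the Rost-motive summand.

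For the unramified lower bound, I would apply Corollary~7.2 (in its integral $\bZ/2^s$-coefficient version), which embeds $H^{*,*}(X;\bZ/2)/(\tau)$ into $H^0_{Zar}(X;\mathcal{H}^*_{\bZ/2})\cong H^*_{ur}(X;\bZ/2)$. Using Theorem~10.1 and the motivic splitting of $M(Q^{2^n-1})$, I would verify that $\bar\rho_4^k$ for $0\le k<2^{n-1}$ are $\bZ/2$-linearly independent modulo $\tau\cdot H^{*,*-1}$, producing the subalgebra $\bZ/2[\bar\rho_4]/(\bar\rho_4^{2^{n-1}})$ inside $H^*_{ur}(X_n;\bZ/2)$; since each $\bar\rho_4^k$ is $2$-torsion, the same classes survive in the $\bZ_2$-coefficient unramified cohomology. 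Finally, $H^*_{ur}$ is a retract-birational invariant of smooth projective varieties (Lemmas~3.1, 3.4 of~[Me]), and the $\bZ/2$-dimension of this subalgebra equals $2^{n-1}$, strictly increasing with $n$, forcing $X_n$ and $X_{n'}$ into distinct retract birational classes whenever $n\neq n'$. The principal obstacle throughout is the verification that $\bar\rho_4\notin N^1$: without it the Frobenius-reciprocity shortcut warned against in the Remark would either contradict the $DH^{2*}=0$ claim or yield a far weaker conclusion, so the entire argument hinges on this asymmetry between $\bar\rho_4$ and its higher powers.
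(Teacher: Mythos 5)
Your decomposition of $R=H^{2*}_{\acute{e}t}(X_n;\bZ_2(*))$ into the $h$-ideal plus the powers of $\bar\rho_4$ matches the paper's starting point, and the $h$-part is handled the same way (Chern class / Gysin image plus Frobenius reciprocity, Lemma 2.2). The gap is in your treatment of $\bar\rho_4^k$ for $k\ge 2$. First, the claim that every such power is algebraic is false for $n\ge 4$: by Lemma 11.1 and Theorem 11.2 the image of the cycle map on the Rost summand is $\bZ_2\{1,\pi\}\oplus\bZ/2\{\bar\rho_{2^{n+1}-2^{n}},\dots,\bar\rho_{2^{n+1}-4}\}$, so only the powers $\bar\rho_4^{2^{n-1}-2^{i-1}}$ are cycle classes; already for $n=4$ the element $\bar\rho_4^2=\bar\rho_8$ is non-algebraic ($8\ne 32-2^{i+1}$), and your argument says nothing about whether it lies in $N^1$ or in $\tilde N^1$. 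Second, even where the algebraicity holds, your two halves contradict each other: placing $\bar\rho_4^k$ in $\tilde N^1\subset N^1$ (to get $DH=0$) forces it to vanish in $H^{*,*}(X;\bZ/2)/(\tau)\cong H^*(X;\bZ/2)/N^1$, which is exactly the group you then use to exhibit $\bar\rho_4^k$ as a nonzero unramified class. You cannot have both.

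The paper resolves this by proving the opposite of your algebraicity claim: Lemma 12.5 shows that \emph{no} power $\bar\rho_4^i$, $i\ge 1$, lies in $N^1H^{*}(M_n;\bZ_2(*))$. The mechanism is concrete and is the step your sketch of ``$\bar\rho_4\notin N^1$'' is missing: if $\bar\rho_s\in N^1$ then $\bar\rho_s=\tau' x$ for an integral class $x\in H^{s,s-1}(X;\bZ_2)$, so $r(x)=\rho^s\tau^{-1}$ in the presentation of $H^{*,*'}(M_n;\bZ/2)$ from Theorem 10.1, and one computes $Q_0(r(x))=\rho^{s+1}\tau^{-2}\ne 0$ (Lemmas 10.2--10.3), contradicting the integrality of $x$; a vague appeal to ``restricting nontrivially to every open'' does not substitute for this $\tau$-division/$Q_0$ obstruction. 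With Lemma 12.5 in hand, $N^1R$ reduces to the $h$-part, which is already in $\tilde N^1R$, giving $DH^{2*}=0$, and simultaneously the classes $\bar\rho_4^k$ survive in $H^{4*}(X;\bZ/2)/N^1\subset H^{4*}_{ur}(X;\bZ/2)$, giving the unramified lower bound and the non-equivalence of $X_n$ and $X_{n'}$. So the single lemma you omit is the one that carries both halves of the theorem.
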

{\bf Remark.}  When $n=1$, we see $X_1\cong \bP^1$ that is,
$X_1$ stable birational.

\begin{cor}
The second and the last formulas in the above theorem, hold when $k$ is a real number field.
\end{cor}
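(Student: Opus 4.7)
The plan is to reduce to Theorem 12.4 (the case $k=\bR$) via base change along a real embedding $\iota : k \hookrightarrow \bR$, which exists by definition of a real number field. Such an embedding gives a flat morphism $p : X_n \times_k \bR \to X_n$ with compatible pullbacks on motivic, \'etale and, via the Zariski-sheaf formalism, unramified cohomology. Since $-1$ is not a square in $k^*$, the class $\rho = -1 \in H^{1,1}(\mathrm{Spec}(k);\bZ/2)$ is nonzero and satisfies $\iota^*\rho_k = \rho_\bR$, so the Bockstein construction of $\S 11$ produces integral lifts $\bar\rho_i \in H^{i,*}_{\acute{e}t}(X_n;\bZ_2)$ over $k$ that map under $p^*$ to their namesakes over $\bR$. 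By Theorem 12.4 the images $p^*(\bar\rho_4^i)$ are nonzero for $1 \le i < 2^{n-1}$, hence $\bar\rho_4^i$ is already nonzero over $k$.

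For the unramified statement I would use that $H^*_{ur}(X;\bZ_2(*)) \cong H^0_{Zar}(X;\mathcal{H}^*)$ and that any class pulled back from $\mathrm{Spec}(k)$ is automatically a global section of the Zariski sheaf $\mathcal{H}^*$, because its restriction to any open $U \subset X$ factors through $\mathrm{Spec}(k)$. The reduction mod $2$ of $\bar\rho_4^i$ is $\rho^{4i}$, pulled back from $\mathrm{Spec}(k)$, so the integral lifts $\bar\rho_4^i$ themselves give global sections. Combining this with the ring structure described in Theorem 12.3 yields the desired inclusion $\bZ/2[\bar\rho_4]/(\bar\rho_4^{2^{n-1}}) \subset H^*_{ur}(X_n;\bZ_2(*))$ over $k$.

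For the non-retract-rational conclusion, retract birational equivalence is preserved under base change: any $k$-retract birational equivalence between $X_n$ and $X_{n'}$ would base change to one between $X_n \times_k \bR$ and $X_{n'} \times_k \bR$ over $\bR$, contradicting Theorem 12.4. The main obstacle will be the compatibility step: making precise the identification between the $\bZ_2$-unramified cohomology used in Theorem 12.4 over $\bR$ and its analogue over $k$, so that the Zariski-sheaf pullback argument is rigorous. This likely requires interpreting $H^*_{ur}(-;\bZ_2(*))$ as the inverse limit of the $\bZ/2^s$-unramified cohomologies and invoking the Lichtenberg-cohomology comparison set up in $\S 11$; once this technicality is handled, the rest of the argument is formal.
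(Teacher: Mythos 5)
Your proposal is correct and follows essentially the same route as the paper: the paper also defines $\rho(k)$ (and hence $\bar\rho_4(k)$) over the real number field $k$ using the restriction map $H_{\acute et}^*(Spec(k);\bZ/2)\to \oplus^{r_1}H_{\acute et}^*(Spec(\bR);\bZ/2)$ to the real places, and detects nonvanishing of $\bar\rho_4(k)$ and of the unramified classes by their nonzero images over $\bR$ (Theorem 12.4). Your write-up is somewhat more explicit about the base-change compatibilities and the retract-rationality descent, but the underlying argument is the same.
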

\begin{proof} 
 Recall that the norm variety $X_n=X_n(k)$ is defined naturaly.

Let $r_1\ge  1$ be real embedding number.
Then we have the restriction
\[r : H_{et}^*(Spec(k);\bZ/2) \to \oplus^{r_1}H_{et}^*(Spec(\bR  );\bZ/2) \]
such that $r$ is surjective for $*\ge 1$ and isomorphic
for $*\ge 3$,  Hence we can define $\rho(k)\in H_{et}^1(Spec(k);\bZ/2)$ so that
$r(\rho(k))=\rho$.  Similarly we can define $\bar \rho_4(k)$
in  $ H_{un}^*(X;\bZ_2)$.  It is nonzero since so for $k=\bR$.

\end{proof}

Since the hyper plain section $h$ is represented by
a first Chern class and $\pi=c_0= h^{2^n-1}$.  By Frobenius reciprocity, we only check elements
\[ \bar \rho_4^i \not  \in N^1H^{2*}(X;\bZ_2(*))  \]
for the first equality in theorem. 
Then $Ideal(h)\in N^1$ and 
\[H^{4*}_{ur}(X;\bZ/2)\supset H^{4*}(X:\bZ/2)/N^1
\supset   \bZ/2[\bar \rho_4]
/(
\bar\rho_4^{2^{n-1} })\]
implies the second formula.

Since $H^{2*}(M_n;\bZ_2(*))$ is a direct summand of 
$H^{2*}(X;\bZ_2(*))$ and $\bar \rho_4$ is defined in
$H^{2*}(M_n;\bZ_2(*))$, we only need to see the following 
Lemma 12.5, (by using Lemma 10.1-10.3) for
the proof of the above theorem.

\begin{lemma}
We have $\bar p_4^i\not \in N^1H^*(M_n;\bZ_2(*))$ for $i\ge 1$.  
%(Hence $H^{4*}(X:\bZ_2)/N^1=H^{4*}(X;\bZ_2)$.)
\end{lemma}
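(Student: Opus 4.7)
The plan is to exhibit $\bar\rho_4^i$ as a nonzero element of the unramified cohomology $H^0_{Zar}(M_n;\mathcal{H}^{4i}_{\bZ_2})$. By the Bloch--Ogus spectral sequence, the edge map gives $H^*(X;\bZ_2)/N^1 \hookrightarrow E_\infty^{0,*} \subset E_2^{0,*} = H^0_{Zar}(X;\mathcal{H}^*_{\bZ_2})$, so this is equivalent to $\bar\rho_4^i \notin N^1$. I would invoke the $\bZ_2$-coefficient analog of Corollary 7.2 (obtained as the inverse limit of the $\bZ/2^s$ sequences referred to in the remark following Corollary 7.2), reducing the problem to showing $\bar\rho_4^i \notin \tau \cdot H^{4i,4i-1}(M_n;\bZ_2)$.

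Suppose for contradiction $\bar\rho_4^i = \tau w$ for some $w \in H^{4i,4i-1}(M_n;\bZ_2)$. Reducing modulo $2$ gives $\rho^{4i} = \tau \cdot r(w)$ in $H^{4i,4i}(M_n;\bZ/2)$. By Theorem 10.1, the only possible nonzero element of $H^{4i,4i-1}(M_n;\bZ/2)$ is $\rho^{4i}\tau^{-1} = \rho^{4i-n-1}a'$, which belongs to the subalgebra exactly when $4i \geq n+1$. In the case $4i < n+1$, this group vanishes, so $r(w) = 0$, forcing $w = 2u$ and hence $\bar\rho_4^i = 2\tau u \in 2\cdot H^{*,*}(M_n;\bZ_2)$; this contradicts the fact that $r(\bar\rho_4^i) = \rho^{4i} \neq 0$ (nonzero in the ambient ring $\bZ/2[\rho,\tau]/(\rho^{2^{n+1}-1})$ since $4i \leq 2^{n+1}-4$).

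For the remaining case $4i \geq n+1$, uniqueness in the ambient ring forces $r(w) = \rho^{4i}\tau^{-1}$, and I would show this mod--$2$ class does not lift to $\bZ_2$. The obstruction is the integral Bockstein $\delta$ with $r\delta = Q_0$. Using Lemma 10.2 ($Q_0(\tau^{-1}) = \rho\tau^{-2}$, $Q_0(a) = 0$), compute
\[ Q_0(\rho^{4i}\tau^{-1}) = \rho^{4i}\cdot \rho\tau^{-2} = \rho^{4i+1}\tau^{-2} = \rho^{4i-n-1}\cdot Q_0(a'), \]
which is a nonzero element of $H^{4i+1,4i-1}(M_n;\bZ/2)$ precisely when $4i+1 \leq 2^{n+1}-2$, i.e.\ $i \leq 2^{n-1}-1$. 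Hence $\delta(r(w)) \neq 0$, so $r(w) \notin \Img(r)$, a contradiction.

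The main obstacle I anticipate is justifying the integral analog of Corollary 7.2 in the required form: one must check that after passing to the inverse limit $\bZ_2 = \varprojlim \bZ/2^s$, the short exact sequence
\[ 0 \to H^{*,*}(X;\bZ_2)/(\tau) \to H^0_{Zar}(X;\mathcal{H}^*_{\bZ_2}) \to \Ker\bigl(\tau\mid H^{*+1,*-1}(X;\bZ_2)\bigr) \to 0 \]
remains exact, and that the coniveau filtration $N^1$ is faithfully detected on the $/(\tau)$ quotient (noting that for small $n$ the mod--$2$ class $\rho^{4i}$ can itself be $\tau$-divisible in $H^{*,*}(M_n;\bZ/2)$, so the argument must genuinely use the integral refinement provided by the $Q_0$-Bockstein obstruction). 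A secondary check is that the range $1 \leq i \leq 2^{n-1}-1$ in the conclusion matches precisely $4i+1 \leq 2^{n+1}-2$, with the boundary $i = 2^{n-1}$ corresponding to $\bar\rho_4^{2^{n-1}} = 0$ as stated in Theorem 12.1.
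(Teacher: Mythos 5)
Your proposal is correct and follows essentially the same route as the paper: identify $N^1H^{2*}(M_n;\bZ_2(*))$ with the image of multiplication by $\tau$, observe that the only mod--$2$ candidate for a preimage of $\rho^{4i}$ is $\rho^{4i}\tau^{-1}$ (nonexistent for $4i\le n$, equal to $\rho^{4i-n-1}a'$ otherwise), and rule out an integral lift via $Q_0(\rho^{4i}\tau^{-1})=\rho^{4i+1}\tau^{-2}\neq 0$, exactly as in the paper's computation with $a'$, $b$, $b'$. Your explicit flagging of the $\bZ_2$-coefficient analogue of the $\tau$-filtration identification is a reasonable point of care that the paper handles only by the remark that the $\bZ/2^s$ versions of Lemma 7.1 and Corollary 7.2 hold.
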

\begin{proof}Consider the following diagram

\[ \begin{CD}
\bar \rho_s\in H^{*,*}(X;\bZ_2) @>{r}>>
\rho^s\in H^{*,*}(X:\bZ/2)  @.\\
@A{\tau'}AA     @A{\tau}AA \\
 x\in H^{*,*-1}(X:\bZ_2) @>{r}>>
H^{*,*-1}(X;\bZ/2) @>{Q_0}>> 
\end{CD}
\]

Suppose $\bar \rho_s\in N^1H^{*,*}(X;\bZ_2)$, which  means that there is 
 $x\in H^{*,*-1}(X:\bZ_2)$ such that $\tau'x=\bar \rho_s$.
We consider the reduction maps $r$ to the cohomology of $\bZ/2$ cefficients.
  Then $\tau r(x)=\rho ^s$.  and  $Q_0(r(x))$
must be zero (since $x$ is in the integral coefficients $\bZ_2$).
We will prove this does not happen.

Recall $a=\rho^{n+1}$ and $a'=a\tau^{-1}$ in $H^{*.*-1}(M_n;\bZ/2)$.

The case $*\le n$ ; 
The cohomology $H^{*,*'}(M_n;\bZ/2)=0$ $mod(Ideal(h))$
for $*>*'$ from Lemma 10.3.     Hence there is no non zero element 
 $\tau^{-1}\rho^* \in H^{*.*-1}(M_n;\bZ/2) \ mod(Ideal(h)$
(where $h\in \tilde N^1$).

The case $*=n+1$;    
Then there is $a'$ such that $\tau a'=a$.
But this element $a'$ is not in the integral $H^{*.*-1}(M_a;\bZ_2)$, because 
\[Q_0(a')=Q_0(\rho^{n+1}\tau^{-1})=\rho^{n+2}\tau^{-2}.\]
which is nonzero in  $H^{*,*'}(M_n;\bZ/2),$ and 
so $a\not\in N^1 H^{*,*}(M_n;\bZ_2)$. 

The case $*>n+1$ .
Let us write $b'=Q_0(a')=\rho^{n+2}\tau^{-2}$.
Next consider the element $b=\tau b'$.  Then we note
\[ \tau b=\tau^2b'= \tau^2 \rho^{n+2}\tau ^{-2}
=a\rho=\rho^{n+2}. \]
That is $b=\rho^{n+2}\tau^{-1}$and $b=\tau Q_0(a')$.
Hence from Theorem 10.1, we see $b\in H^{*.*'}(M_n;\bZ/2)$.

Since $Q_0(b')=Q_0Q_0(a')=0$, we can compute
\[Q_0(b)=Q_0(\tau b')=\rho b'+\tau Q_0(b')=\rho b' \]
is nonzero in $H^{*,*'}(X;\bZ/2)$ and hence $b$ is
not in the integral $H^{*,*'}(X;\bZ_2)$.  Therefore 
$\rho^{n+2}  \not  \in N^1H^*(X;\bZ_2). $

Similarly we can show for $j>n+2$, the element $\rho^{j}$ is not in
$N^1H^*(X;\bZ_2). $
\end{proof}

The elements $a,...,b'$ are written in $\bZ/2[\rho,\tau,\tau^{-1}]/(\rho^{2^n-1})$ as follows.
(Recall Theorem 10.1 and Lemma 10.2.)

\[\begin{CD}
   @.  @.  \rho^{n+2}\in H^{n+2,n+2}  @.  \\
   @.   @.  @A{\tau}AA   @.  \\
   @.   a=\rho^{n+1}\in H^{n+1,n+1}\   @.  b =\rho^{n+2}\tau^{-1}@>{Q_0}>> \rho^{n+3}\tau ^{-2} \\
  @.    @A{\tau}AA    @A{\tau}AA   @.  \\
   \rho^n\in H^{n.n}  @. \quad a'=\rho^{n+1}\tau^{-1}   @>{Q_0}>> b'=\rho^{n+2}\tau^{-2} @>{Q_0}>>  0  \\
@A{\tau}AA @.  @. @. @.  \\
0=H^{n,n-1}  @.  @. @. @.
 \end{CD} 
\]

\end{document}